\providecommand{\U}[1]{\protect\rule{.1in}{.1in}}
\newtheorem{theorem}{Theorem}[section]
\newtheorem{definition}[theorem]{Definition}
\newtheorem{example}[theorem]{Example}
\newtheorem{lemma}[theorem]{Lemma}
\newtheorem{remark}[theorem]{Remark}
\newenvironment{proof}[1][Proof]{\noindent\textbf{#1.} }{\rule{0.5em}{0.5em}}
\begin{document}

\title{On the spectrum of the hierarchical Schr\"{o}dinger type operators }
\author{Alexander Bendikov\thanks{A. Bendikov was supported by the Polish National
Science center, grant 2015/17/B/ST1/00062 and by SFB 1283 of the German
Research Council. }
\and Alexander Grigor'yan\thanks{A. Grigor'yan was supported by SFB 1283 of the
German Research Council. }
\and Stanislav Molchanov\thanks{S. Molchanov was supported by the Russian Science
Foundation (Projects: 20-11-20119 and 17-11-01098). }}
\maketitle

\begin{abstract}
The goal of this paper is the spectral analysis of the Schr\"{o}dinger type
operator $H=L+V$, the perturbation of the Taibleson-Vladimirov multiplier
$L=\mathfrak{D}^{\alpha}$ by a potential $V$. Assuming that $V$ belongs to a
certain class of potentials we show that the discrete part of the spectrum of
$H$ may contain negative energies, it also appears in the spectral gaps of
$L$. We will split the spectrum of $H$ in two parts: high energy part
containing eigenvalues which correspond to the eigenfunctions located on the
support of the potential $V,$ and low energy part which lies in the spectrum
of certain bounded Schr\"{o}dinger-type operator acting on the Dyson
hierarchical lattice.

We pay special attention to the class of sparse potentials. In this case we
obtain precise spectral asymptotics for $H$ provided the sequence of distances
between locations tends to infinity fast enough.

We also obtain certain results concerning localization theory for $H$ subject
to (non-ergodic) random potential $V$. Examples illustrate our approach.

\end{abstract}
\tableofcontents

\section{Introduction}

\setcounter{equation}{0}

The spectral theory of nested fractals similar to the Sierpinski gasket, i.e.
the spectral theory of the corresponding Laplacians, is well understood. It
has several important features: Cantor-like structure of the essential
spectrum and, as result, the large number of spectral gaps, presence of
infinite number of eigenvalues each of which has infinite multiplicity and
compactly supported eigenstates, non-regularly varying at infinity heat
kernels which contain an oscillating in $\log t$ scale terms etc, see
\cite{GrabnerWoess}, \cite{DerfelGrabner} and \cite{BCW}.

The spectral properties mentioned above occur in the very precise form for the
Taibleson-Vladimirov Laplacian $\mathfrak{D}^{\alpha}$, the operator of
fractional derivative of order $\alpha$. This operator can be introduced in
several different forms (say, as $L^{2}$-multiplier in the $p$-adic analysis
setting, see \cite{Vladimirov}) but we select the geometric approach
\cite{Dyson1}, \cite{Molchanov}, \cite{Molchanov1}, \cite{BGP}, \cite{BGPW},
\cite{BendikovKrupski} and \cite{BGMS}.

\subsection{The Dyson hierarchical model}

Let us fix an integer $p\geq2$ and consider the family $\{\Pi_{r}%
:r\in\mathbb{Z}\}$ of partitions of the set $X=\mathbb{[}0,+\infty\lbrack$
such that each $\Pi_{r}$ consists of all intervals $I=[kp^{r},(k+1)p^{r}[$,
$k=0,1,...$. We call $r$ the rank of the partition $\Pi_{r}$\ (respectively,
the rank of the interval $I\in\Pi_{r}$). Each interval of rank $r$ is the
union of $p$ disjoint intervals of rank $(r-1)$. Each point $x\in X$ \ belongs
to a certain interval $I_{r}(x)$ of rank $r$, and intersection of all
intervals $I_{r}(x)$ is $\{x\}.$

\begin{definition}
Let $\mathcal{B}$ be the family of all intervals $[kp^{r},(k+1)p^{r}[$.
\emph{The hierarchical distance} $\mathrm{d}(x,y)$ is defined as the Lebesgue
measure $m(I)$ of the minimal interval $I\in\mathcal{B}$ which contains both
$x$ and $y$.
\end{definition}

It is easy to see that the function $(x,y)\rightarrow\mathrm{d}(x,y)$ is
non-degenerate, symmetric and for arbitrary $x,y$ and $z$,
\[
\mathrm{d}(x,y)\leq\max\{\mathrm{d}(x,z),\mathrm{d}(z,y)\},
\]
\ i.e. $\mathrm{d}(x,y)$ is an \emph{ultrametric} on $X$. It has the following properties:

\begin{itemize}
\item The ultrametric $\mathrm{d}(x,y)$ strictly majorizes the Euclidean
metric $|x-y|$. Indeed, by the very definition, $\mathrm{d}(x,y)\geq|x-y|$ for
all $x,y\in X$ whereas $\mathrm{d}(1-\varepsilon,1)=p$ for all $0<\varepsilon
<1$.
\end{itemize}

%

\begin{figure}
[tbh]
\begin{center}
\includegraphics[
height=2.7in,
width=4.8in
]%
{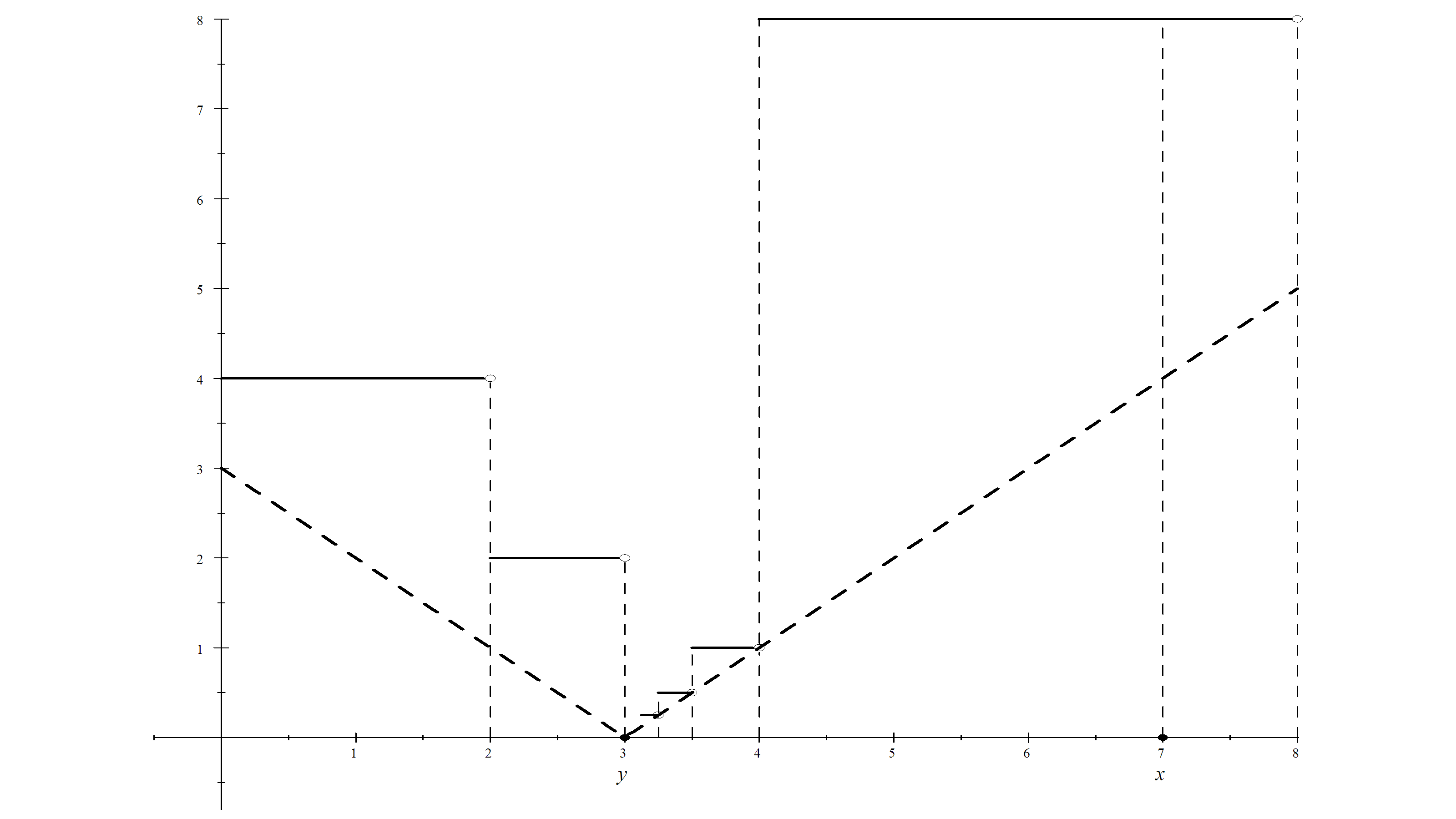}%
\caption{Comparison of two metrics: $d\left(  x,y\right)  \geq\left\vert
x-y\right\vert $}%
\label{pic1}%
\end{center}
\end{figure}

\begin{itemize}
\item The couple $(X,\mathrm{d})$ is a complete, locally compact, non-compact,
perfect and separable metric space. In this metric space the set of all open
balls coincides with the set of all intervals $I\in\mathcal{B}$. In
particular, in the metric space $(X,\mathrm{d})$ the set of all open balls is
countable whereas the set $X$ by itself is uncountable. Next property says
that $(X,\mathrm{d})$ is a totally disconnected metric space. \footnote{In
particular, $(X,\mathrm{d})$ is homeomorphic to the punctured Cantor set
$\{0,1\}^{\aleph_{0}}\backslash\{o\},$ see a survey on totally disconnected
metric spaces in \cite[Proposition 2.2]{BendikovKrupski}.}

\item Each open ball $B$ in $(X,\mathrm{d})$ is a closed set, each point $x$
of $B$ can be regarded as its center, any two balls $C$ and $D$ either do not
intersect or one is a subset of another etc.

\item It is remarkable that the Borel $\sigma$-algebra generated by the
ultrametric $\mathrm{d}(x,y)$ coincides with the classical Borel $\sigma
$-algebra (generated by the Euclidean metric).
\end{itemize}

\begin{definition}
Let us fix a parameter $\kappa\in]0,1[$. \emph{The hierarchical Laplacian} $L$
we introduce following \cite{MolchanovVainberg1} as a linear combination of
"elementary Laplacians"%
\begin{equation}
(Lf)(x)=%
{\displaystyle\sum\limits_{r=-\infty}^{+\infty}}
(1-\kappa)\kappa^{r-1}\left(  f(x)-\frac{1}{m(I_{r}(x))}%
{\displaystyle\int\limits_{I_{r}(x)}}
fdm\right)  .\text{ \ } \label{HL}%
\end{equation}

\end{definition}

The series in (\ref{HL}) diverges in general but it is finite and belongs to
$L^{2}(X,m)$ for any $f\in L^{2}(X,m)$\ which takes constant values on
intervals of any fixed rank $r$.

The operator $L$ admits a complete system of compactly supported
eigenfunctions. Indeed, let $I$ be an interval of rank $r$, and $I_{1}%
,I_{2},...,I_{p}$ be its subintervals of rank $r-1$. Let us consider $p$
functions%
\[
f_{I_{i}}=\frac{1_{I_{i}}}{m(I_{i})}-\frac{1_{I}}{m(I)},\text{ }i=1,2,...,p.
\]
Each function $f_{I_{i}}$\ belongs to the domain of the operator $L$\ and%
\[
Lf_{I_{i}}=\lambda(I)f_{I_{i}}\text{, where }\lambda(I)=\kappa^{r-1}.\text{\ }%
\]
Let us consider the eigenspace $\mathcal{H}(I):=\mathrm{span}\{f_{I_{i}}\}$.
Then $\dim\mathcal{H}(I)=p-1,$ the eigenspaces $\mathcal{H}(I)$ and
$\mathcal{H}(I^{\prime})$ are orthogonal for $I\neq I^{\prime}$ and%
\[%
{\displaystyle\bigoplus\limits_{r\in\mathbb{Z}}}
{\displaystyle\bigoplus\limits_{I\in\Pi_{r}}}
\mathcal{H}(I)=L^{2}(X,m).
\]
In particular, $L$ is essentially self-adjoint operator having a pure point
spectrum
\[
Spec(L)=\{0\}\cup\{\kappa^{r}:r\in\mathbb{Z}\}.
\]
Clearly each eigenvalue $\lambda(I)=\kappa^{r-1}$ has infinite multiplicity,
whence $Spec(L)$ coincides with its essential part $Spec_{ess}(L)$.

We shell see below that writing $\kappa=p^{-\alpha}$ the operator $L$ can be
identified with the Taibleson-Vladimirov operator $\mathfrak{D}^{\alpha}$, the
operator of fractional derivative of order $\alpha$, defined as $L^{2}%
$-multiplier in the $p$-adic analysis setting \cite{Vladimirov94},
\cite{Kochubey2004}.

The constant $D=2/\alpha$ is called \emph{the spectral dimension }(equations
(\ref{Dyson_heat_kernel}) and (\ref{Dyson_diagonal_h.k.}) show the role of
this constant in the heat kernel estimates below).

According to \cite{BGPW} the operator $L$ can be represented as a
hypersingular integral operator%
\[
Lf(x)=%
{\displaystyle\int\limits_{0}^{\infty}}
\left(  f(x)-f(y)\right)  J(x,y)dy
\]
where%
\[
J(x,y)=\frac{\kappa^{-1}-1}{1-\kappa/p}\cdot\frac{1}{\mathrm{d}(x,y)^{1+2/D}%
}.
\]
The Markov semigroup $(e^{-tL})_{t>0}$ is symmetric and admits a continuous
heat kernel $p(t,x,y)$ \footnote{$p(t,x,y)$ is continuous w.r.t. the
ultrametric \textrm{d}$(x,y)$ but it is discontinuous w.r.t. the Euclidean
metric $\left\vert x-y\right\vert $}. The function $p(t,x,y)$ can be estimated
(uniformly in $t,x$ and $y$) as follows
\begin{equation}
p(t,x,y)\asymp\frac{t}{[t^{D/2}+\mathrm{d}(x,y)]^{1+2/D}}.\text{
\ }\footnote{{} We write $f\asymp g$ if the ratio $f/g$ is bounded from above
and from below by positive constants for a specified range of variables. We
write $f\sim g$ if the ratio $f/g$ tends to identity.}\text{\ }
\label{Dyson_heat_kernel}%
\end{equation}
The function $p(t,x,x)$ does not depend on $x,$ denote it $p(t)$. It can be
represented in the form
\begin{equation}
p(t)=t^{-D/2}\mathcal{A(}\log_{2}t\mathcal{)}, \label{Dyson_diagonal_h.k.}%
\end{equation}
where $\mathcal{A(\tau)}$ is a continuous non-constant $\alpha$-periodic
function, see \cite[Proposition 2.3]{MolchanovVainberg1}, \cite{BGPW},
\cite{BCW}. In particular, in contrary to the classical case (symmetric stable
densities), the function $t\rightarrow p(t)$ does not vary regularly.

There are already several publications on the hierarchical Laplacian acting on
a general ultrametric measure space $(X,d,m)$ \cite{AlbeverioKarwowski},
\cite{AisenmanMolchanov}, \cite{Molchanov}, \cite{Molchanov1}, \cite{BGP},
\cite{BGPW}, \cite{BendikovKrupski}, \cite{BGMS}. By the general theory
developed in \cite{BGP}, \cite{BGPW} and \cite{BendikovKrupski}, any
hierarchical Laplacian $L$ acts in $L^{2}(X,m)$ as essentially self-adjoint
operator having a pure point spectrum. This operator can be represented in the
form
\begin{equation}
Lf(x)=%
{\displaystyle\int\limits_{X}}
(f(x)-f(y))J(x,y)dm(y)\text{. \ } \label{Spectrum}%
\end{equation}
The Markov semigroup $(e^{-tL})_{t>0}$ admits with respect to $m$ a continuous
transition density $p(t,x,y)$. It turns out that in terms of certain
(intrinsically related to $L$) ultrametric $d_{\ast}$,%
\begin{equation}
\text{\ }J(x,y)=\int\limits_{0}^{1/\emph{d}_{\ast}(x,y)}N(x,\tau)d\tau,
\label{Jump-kernel}%
\end{equation}%
\begin{equation}
\emph{p}(t,x,y)=t\int\limits_{0}^{1/\emph{d}_{\ast}(x,y)}N(x,\tau)\exp
(-t\tau)d\tau, \label{d*-jump kernel}%
\end{equation}
and%
\begin{equation}
\emph{p}(t,x,x)=\int\limits_{0}^{\infty}\exp(-t\tau)dN(x,\tau)
\label{on-diagonal density}%
\end{equation}
where $N(x,\tau)$ is the so called \emph{spectral function} related to $L$
(will be defined later).

\subsection{Outline}

Let us describe the main body of the paper. In Section 2 we introduce the
notion of homogeneous hierarchical Laplacian $L$ and list its basic
properties: the spectrum of the operator $L$ is pure point, all
eigenvalues of $L$ have infinite multiplicity and compactly supported
eigenfunctions, the heat kernel $p(t,x,y)$ exists and it is a continuous
function having certain asymptotic properties etc. As a special example we
consider the case $X=\mathbb{Q}_{p},$ the ring of $p$-adic numbers endowed
with its standard ultrametric $d(x,y)=\left\vert x-y\right\vert _{p}$ and the
normed Haar measure $m$. The hierarchical Laplacian $L$ in our example
coincides with the Taibleson-Vladimirov operator $\mathfrak{D}^{\alpha}$, the
operator of fractional derivative of order $\alpha$, see \cite{Vladimirov},
\cite{Vladimirov94}, and \cite{Kochubey2004}. The most complete source for the
basic definitions and facts related to the $p$-adic analysis is \cite{Koblitz}
and \cite{Taibleson75}.

The Schr\"{o}dinger type operator $H=L+V$ with hierarchical Laplacian $L$ was
studied in \cite{Dyson2}, \cite{Molchanov}, \cite{MolchanovVainberg1},
\cite{MolchanovVainberg2}, \cite{Bovier}, \cite{Kvitchevski1},
\cite{Kvitchevski2} (the hierarchical lattice of Dyson) and in
\cite{Vladimirov94}, \cite{VladimirovVolovich}, \cite{Kochubey2004} (the field
of $p$-adic numbers). In the next sections we consider the Schr\"{o}dinger
type operator acting on a homogeneous ultrametric space $X$. We assume that
the potential $V$ is of the form $V=%
{\displaystyle\sum}
\sigma_{i}1_{B_{i}}$, where $B_{i}$ are balls which belong to a fixed
horocycle $\mathcal{H}$ (i.e. all $B_{i}$ have the same diameter). The main
aim here is to study the set $Spec(H).$ Under certain assumptions on $V$ (e.g.
$V(x)\rightarrow0$ at infinity $\varpi$ etc.) we conclude that the set
$Spec(H)$\ is pure point (with possibly infinite number of limit points). We
split the set $Spec(H)$ in two disjoint parts: the first part consists of the
point $\lambda=0$ and the eigenvalues of the operator $L$ which correspond to
the horocycle $\mathcal{H}$ (with compactly supported eigenfunctions) and the
second part is the closure of a countably infinite set $\Xi$ of eigenvalues of
the operator $H$ (with non-compactly supported eigenfunctions). In the case of
sparse potential $V$, i.e. when $d(B_{i},B_{j})\rightarrow\infty$ fast enough
we specify the structure of the set $\Xi$. In this connection we would like to
mention here pioneering works of S. Molchanov \cite{Molchanov}, D. Krutikov
\cite{Krutikov1}, \cite{Krutikov2}, and N. Kochubei \cite{Kochubey2004}.

In the last section we consider the potential $V$ of the form $V=%
{\displaystyle\sum}
\sigma_{i}(\omega)1_{B_{i}}$, where $\sigma_{i}(\omega)$, $\omega\in
(\Omega,\digamma,P)$, are i.i.d. random variables, and embark on the
localization theory. More precisely, we show that if the sequence of
(non-random) distances $d(B_{i},B_{j})$ between locations tend to infinity
fast enough then the spectrum of $H$ is pure point for $P$-a.a. $\omega
\in\Omega$.

In the case when $X$ is discrete, $L$ is the Dyson Laplacian, $B_{i}$ are
singletons and $V$ is ergodic the \emph{localization theorem} appeared first
in the paper of Molchanov \cite{Molchanov} ($\sigma_{i}(\omega)$ are Cauchy
random variables) and later (under more general assumptions on $\sigma
_{i}(\omega)$) in the papers of Kritchevski \cite{Kvitchevski2} and
\cite{Kvitchevski1}. The proof of this theorem is based on the self-similarity
of $H$. This approach is not applicable to the case of (random) sparse potentials.

The proof of the localization theorem for (random) sparse potentials presented
in this paper is based on the abstract form of Simon-Wolff criterion
\cite{SimonWolff} for pure point spectrum, technique of fractional moments,
decoupling lemma of Molchanov and Borel-Cantelli type arguments, see
\cite{AisenmanMolchanov}, \cite{Molchanov1}.

\section{Preliminaries}

\setcounter{equation}{0}

\subsection{Homogeneous ultrametric space}

Let $(X,d)$ be an ultrametric space. Recall that a metric $d$ is called an
\emph{ultrametric} if it satisfies the ultrametric inequality
\[
d(x,y)\leq\max\{d(x,z),d(z,y)\},
\]
that is stronger than the usual triangle inequality. For any $x\in X$ and
$r\geq0$ consider the closed ball $B_{r}\left(  x\right)  =\left\{  y\in
X:d\left(  x,y\right)  \leq r\right\}  .$ The basic consequence of the
ultrametric property is that $B_{r}\left(  x\right)  $ is an open set for any
$r>0.$ Moreover, each point $y\in B_{r}\left(  x\right)  $ can be regarded as
its center, and any two balls of the same radius are either disjoint or
identical. This implies that, for any $0<s<r$, any ball $B_{r}\left(
x\right)  $ is a disjoint union of a finite family of balls of radius $s.$
Consequently, a collection of all distinct balls of the same radius form a
partition of $X.$ See e.g. \cite[Section 1]{BendikovKrupski} and references therein.

In this paper we always assume that the ultrametric space $(X,d)$ is
non-compact and that it is \emph{proper}, i.e. each $d$-ball is a compact set.
It follows that $\left(  X,d\right)  $ is separable. In addition to that we
always assume that $\left(  X,d\right)  $ is \emph{homogeneous} that is, the
group of isometries of $(X,d)$ acts transitively. In particular,\ a
homogeneous ultrametric measure space is either discrete or perfect.

Let $m$ be a Radon measure on $X$ with full support and such that $m$ is
invariant with respect to the the group of isometries of $(X,d)$. It follows
that any two balls of the same diameter have the same measure $m$ and that
$m\left(  X\right)  =\infty.$ Since the ultrametric property is preserved when
applying any monotone increasing function to $d$, we can and will assume
without loss of generality that, for all balls $B$ of positive diameter,
\[
m(B)=\mathrm{diam}(B).
\]
Let $\mathcal{B}$ be the family of all distinct balls of positive radii in
$X$. Since $m$ have full support, it follows that $m\left(  B\right)  >0$ for
all $B\in\mathcal{B}.$The set $\mathcal{B}$ is at most countable whereas $X$
by itself may well be uncountable (e.g. $X=[0,+\infty\lbrack$ with
$\mathcal{B}$ consisting of all $p$-adic intervals as explained in
Introduction). To any ultrametric space $(X,d)$ one can associate in a
standard fashion a tree $\mathcal{T}.$ The vertices of the tree are elements
of $\mathcal{B}$, the boundary $\partial\mathcal{T}$ can be identified with
the one-point compactification $X\cup\{\varpi\}$ of $X.$ We refer to
\cite{BendikovKrupski} for a treatment of the association between a
ultrametric space and the tree of its metric balls.

It is remarkable that a homogeneous ultrametric measure space $(X,d,m)$ can be
identified with  certain locally compact Abelian group $\mathfrak{G}$ equipped
with a translation invariant distance $\mathfrak{d}$ and the Haar measure
$\mathfrak{m}$, see the paper of Del Muto and Fig\`{a}-Talamanca \cite[Section
2]{Del Muto}. This identification is not unique. One possible way to define
such identification is to choose the sequence $\{a_{n}\}$\ of forward degrees
associated with the tree of balls $\Upsilon(X)$. This sequence is two-sided if
$X$ is non-compact and perfect, it is one-sided if $X$ is compact and perfect,
or if $X$ is discrete. In the 1st case we identify $X$ with $\Omega_{a}$, the
ring of $a$-adic numbers, in the 2nd case with $\Delta_{a}\subset\Omega_{a}$,
the ring of $a$-adic integers, and in the 3rd case with the discrete group
$\Omega_{a}/\Delta_{a}$. We refer to \cite[(10.1)-(10.11), (25.1),
(25.2)]{HewittRoss} for the comprehensive treatment of special groups
$\Omega_{a}$, $\Delta_{a}$ and $\Omega_{a}/\Delta_{a}$. The identification
$X\cong\mathfrak{G}$ makes it possible to use the Harmonic analysis tools
available for Abelian groups.

\subsection{Homogeneous hierarchical Laplacian}

\ Let $(X,d,m)$ be a homogeneous ultrametric space. Let $C:\mathcal{B}%
\rightarrow(0,\infty)$ be a function satisfying the following two conditions:

$(i)$ $C(A)=C(B)$ for any two balls $A$ and $B$ of the same diameter;

$(ii)$ for all $B\in\mathcal{B}$%
\begin{equation}
\lambda(B):=\sum\limits_{T\in\mathcal{B}:\text{ }B\subseteq T}C(T)<\infty.
\label{C1 condition}%
\end{equation}
The class of functions $C(B)$ satisfying $(i)$ and $(ii)$ is reach enough. For
example, fix $\alpha>0$ and set
\[
C(B)=m(B)^{-\alpha}-m(B^{\prime})^{-\alpha}%
\]
for any two nearing neighboring balls $B\subset B^{\prime}$
\textbf{\footnote{We say that $B\subset B^{\prime}$ are nearing neighboring
balls if for any $T\in\mathcal{B}$ such that $B\subseteq T\subseteq B^{\prime
}$ we have either $T=B$ or $T=B^{\prime}.$ }.} In this case it follows from
(\ref{C1 condition}) that
\[
\lambda(B)=m(B)^{-\alpha}.
\]
Let $\mathcal{D}$ be the set of all locally constant functions having compact
support. The set $\mathcal{D}$ belongs to Banach spaces $C_{0}(X)$ and
$L^{p}(X,m),$ $1\leq p<\infty,$ and is a dense subset there.

Denote by $\mathcal{B}\left(  x\right)  $ the family of all balls from
$\mathcal{B}$ containing $x$ (or equivalently, the family of all balls
$B_{r}\left(  x\right)  $ with $r>0$). Given the data $(\mathcal{B},C,m)$ we
define \emph{the} \emph{homogeneous} \emph{hierarchical} \emph{Laplacian} $L$
as an operator acting on functions $f\in\mathcal{D}$ as follows
\begin{equation}
Lf(x):=\sum\limits_{B\in\mathcal{B}(x)}C(B)\left(  f(x)-\frac{1}{m(B)}%
\int\limits_{B}fdm\right)  \text{.} \label{hlaplacian}%
\end{equation}
It is easy to see that $Lf\in L^{2}(X,m)$ so that we consider $(L,\mathcal{D}%
)$ as a densely defined operator in $L^{2}\left(  X,m\right)  .$ This operator
is symmetric and admits a complete system of eigenfunctions%
\begin{equation}
f_{B}=\frac{\mathbf{1}_{B}}{m(B)}-\frac{\mathbf{1}_{B^{\prime}}}{m(B^{\prime
})}, \label{eigenfunction}%
\end{equation}
where the couple $B\subset B^{\prime}$ runs over all nearest neighboring balls
from $\mathcal{B}$. The eigenvalue corresponding to $f_{B}$ is $\lambda
(B^{\prime})$ defined at (\ref{C1 condition}),
\[
Lf_{B}(x)=\lambda(B^{\prime})f_{B}(x).
\]
Since the system of eigenfunctions is complete, we conclude that
$(L,\mathcal{D})$ is essentially self-adjoint operator.

\emph{The intrinsic ultrametric} $d_{\ast}(x,y)$ is defined as follows
\begin{equation}
d_{\ast}(x,y):=\left\{
\begin{array}
[c]{ccc}%
0 & \text{when} & x=y\\
1/\lambda(x\curlywedge y) & \text{when} & x\neq y
\end{array}
\right.  , \label{intrinsic ultrametric}%
\end{equation}
where $x\curlywedge y$ is the minimal ball containing both $x$ and $y$. In
particular, for any open ball $B,$ we have%
\begin{equation}
\lambda(B)=\frac{1}{\mathrm{diam}_{\ast}(B)}. \label{intrinsic diameter}%
\end{equation}
\emph{The spectral function} $\tau\rightarrow N(\tau),$ see equation
(\ref{Jump-kernel}), is defined as a left-continuous step-function having
jumps at the points $\lambda(B)$, and%
\[
N(\lambda(B))=1/m(B).
\]
$\emph{The}$ \emph{volume function }$V(r)$ is defined by setting $V(r)=m(B)$
where the ball $B$ has $d_{\ast}$-radius $r$. It is easy to see that
\begin{equation}
N(\tau)=1/V(1/\tau). \label{Spectral function}%
\end{equation}
\emph{The Markov semigroup} $P_{t}=e^{-tL}$ admits a continuous density
$p(t,x,y)$ with respect to $m$, we call it \emph{the heat kernel.} The
function $p(t,x,y)$ can be represented in the form given by equations
(\ref{d*-jump kernel}) and (\ref{on-diagonal density}). Respectively, the
Markov generator $L$ admits the representation given by equations
(\ref{Spectrum}) and (\ref{Jump-kernel}).

\emph{The resolvent operator} $(L+\lambda\mathrm{I})^{-1},\lambda>0,$ admits a
continuous strictly positive kernel $\mathcal{R}(\lambda,x,y)$ with respect to
the measure $m$. The resolvent operator\ is well defined for $\lambda=0,$ i.e.
the Markov semigroup $(P_{t})_{t>0}$ is transient, if and only if for some
(equivalently, for all) $x\in X$ the function $\tau\rightarrow1/V(\tau)$ is
integrable at $\infty$. Its kernel $\mathcal{R}(0,x,y)$, called also \emph{the
Green function}, is of the form%
\begin{equation}
\mathcal{R}(0,x,y)=%
{\displaystyle\int\limits_{\emph{r}}^{+\infty}}
\frac{d\tau}{V(\tau)}\text{, }r=\emph{d}_{\ast}(x,y). \label{Green function}%
\end{equation}
Under certain Tauberian conditions the equation from above takes the form
\begin{equation}
\mathcal{R}(0,x,y)\asymp\frac{\emph{r}}{V(\emph{r})}\text{, }r=\emph{d}_{\ast
}(x,y). \label{Green function'}%
\end{equation}

\subsection{An example}

Let $\Phi:\mathbb{R}_{+}\rightarrow\mathbb{R}_{+}$ be an increasing
homeomorphism. For any two nearest neighboring balls $B\subset B^{\prime}$ we
define
\begin{equation}
C(B)=\Phi\left(  1/m(B)\right)  -\Phi\left(  1/m(B^{\prime})\right)  .
\label{An example}%
\end{equation}
Then the following properties hold:

\begin{description}
\item[(i)] $\lambda(B)=\Phi\left(  1/m(B)\right)  $,

\item[(ii)] \ $d_{\ast}(x,y)=1/\Phi\left(  1/m(x\curlywedge y)\right)  $,

\item[(iii)] $V(r)\leq1/\Phi^{-1}(1/r).$ Moreover, $V(r)\asymp1/\Phi
^{-1}(1/r)$ whenever both $\Phi$ and $\Phi^{-1}$ are doubling and
$m(B^{\prime})\leq cm(B)$ for some $c>0$ and all neighboring balls $B\subset
B^{\prime}$. In turn, this yields
\end{description}

\begin{equation}
p(t,x,y)\asymp t\cdot\min\left\{  \frac{1}{t}\Phi^{-1}\left(  \frac{1}%
{t}\right)  ,\frac{1}{m(x\curlywedge y)}\Phi\left(  \frac{1}{m(x\curlywedge
y)}\right)  \right\}  , \label{HK-Ex}%
\end{equation}
and%
\begin{equation}
p(t,x,x)\asymp\Phi^{-1}\left(  \frac{1}{t}\right)  \label{Spectral asympt.}%
\end{equation}
for all $t>0$ and $x,y\in X$.

\subsection{$\mathcal{L}^{2}$-multipliers}

As a special case of the general construction consider $X=\mathbb{Q}_{p}$, the
ring of $p$-adic numbers equipped with its standard ultrametric
$d(x,y)=\left\vert x-y\right\vert _{p}$. Notice that the ultrametric spaces
$(\mathbb{Q}_{p},d)$ and $(\mathbb{[}0,\infty\mathbb{)},\mathrm{d})$ with
non-Euclidean$\ \mathrm{d}$ (the Dyson's model) as explained in the
introduction, are isometric.

Let $m$ be the normed Haar measure on the Abelian group $\mathbb{Q}_{p},$
\ $\mathcal{L}^{2}=L^{2}(\mathbb{Q}_{p},m)$ and $\mathcal{F}:f\rightarrow
\widehat{f}$ the Fourier transform acting in $\mathcal{L}^{2}$. It is known,
see \cite{Taibleson75}, \cite{Vladimirov94}, \cite{Kochubey2004}, that
$\mathcal{F}:\mathcal{D}\rightarrow\mathcal{D}$ is a bijection.

Let $\Phi:\mathbb{R}_{+}\rightarrow\mathbb{R}_{+}$ be an increasing
homeomorphism. The self-adjoint operator $\Phi(\mathfrak{D)}$ we define as
$\mathcal{L}^{2}-$multiplier, that is,
\[
\widehat{\Phi(\mathfrak{D)}f}(\xi)=\Phi(\left\vert \xi\right\vert
_{p})\widehat{f}(\xi),\text{ \ }\xi\in\mathbb{Q}_{p}.
\]
By \cite[Theorem 3.1]{BGPW}, $\Phi(\mathfrak{D)}$ is a homogeneous
hierarchical Laplacian. The eigenvalues $\lambda(B)$\ of the operator
$\Phi(\mathfrak{D)}$ are of the form
\begin{equation}
\lambda(B)=\Phi\left(  \frac{p}{m(B)}\right)  . \label{Lambda-Phi eigenvalue}%
\end{equation}
\ Let $p(t,x,y)$ be the heat kernel associated with the operator
$\Phi(\mathfrak{D}).$ Assume that both $\Phi$ and $\Phi^{-1}$ are doubling,
then equations (\ref{HK-Ex}) and (\ref{Spectral asympt.}) apply. Since for any
$x,y\in\mathbb{Q}_{p}$, $m(x\curlywedge y)=\left\vert x-y\right\vert _{p}$ we
obtain
\begin{equation}
p(t,x,y)\asymp t\cdot\min\left\{  \frac{1}{t}\Phi^{-1}\left(  \frac{1}%
{t}\right)  ,\frac{1}{\left\vert x-y\right\vert _{p}}\Phi\left(  \frac
{1}{\left\vert x-y\right\vert _{p}}\right)  \right\}  , \label{HK-bounds II}%
\end{equation}
and%
\begin{equation}
p(t,x,x)\asymp\Phi^{-1}\left(  \frac{1}{t}\right)  . \label{J-bounds I}%
\end{equation}
The Taibleson-Vladimirov operator $\mathfrak{D}^{\alpha}$ is $\mathcal{L}^{2}%
$-multiplier, it can be written as a hypersingular integral operator
\begin{equation}
\mathfrak{D}^{\alpha}f(x)=\frac{1}{\Gamma_{p}(-\alpha)}\int_{\mathbb{Q}_{p}%
}\frac{f(y)-f(x)}{\left\vert y-x\right\vert _{p}^{1+\alpha}}dm(y),
\label{D_alpha}%
\end{equation}
where%
\[
\Gamma_{p}(z)=\frac{1-p^{z-1}}{1-p^{-z}}%
\]
is the $p$-adic Gamma-function \cite[Ch.1, Sec.VIII.2, Eq.(2.17)
]{Vladimirov94}.

The heat kernel $p_{\alpha}(t,x,y)$ of the operator $\mathfrak{D}^{\alpha}$
admits two-sided bounds
\begin{equation}
p_{\alpha}(t,x,y)\asymp\frac{t}{(t^{1/\alpha}+\left\vert x-y\right\vert
_{p})^{1+\alpha}}.\text{ } \label{alpha heat k}%
\end{equation}
In particular, $p_{\alpha}(t,x,x)\asymp t^{-1/\alpha}$, whence the Markov
semigroup $(e^{-t\mathfrak{D}^{\alpha}})_{t>0}$ is transient if and only if
$\alpha<1$.

In the transient case the Green function is of the form%
\begin{equation}
\mathcal{R}_{\alpha}(0,x,y)=\frac{1}{\Gamma_{p}(\alpha)}\frac{1}{\left\vert
x-y\right\vert _{p}^{1-\alpha}}.
\end{equation}
For all facts listed above we refer the reader to \cite{BGP}, \cite{BGPW} and
\cite{BendikovKrupski}.

\section{Schr\"{o}dinger type operators}

Let $(X,d,m)$ be a homogeneous ultrametric measure space and $L$ a homogeneous
hierarchical Laplacian acting on $(X,d,m)$. Identifying $(X,d)$ with a locally
compact Abelian group (say, $X=\mathbb{Q}_{a}$) we can regard $-L$ as a
translation invariant isotropic Markov generator. By (\ref{Spectrum}), the
operator $(L,\mathcal{D})$ is of the form%
\begin{equation}
Lf(x)=%
{\displaystyle\int\limits_{X}}
(f(x)-f(y))J(x-y)dm(y).\text{ } \label{Levy generator}%
\end{equation}
In terms of the Fourier transform, we have%
\[
\widehat{Lf}(\theta)=\widehat{L}(\theta)\cdot\widehat{f}(\theta),
\]
where
\begin{equation}
\widehat{L}(\theta)=%
{\displaystyle\int\limits_{X}}
[1-\mathfrak{\operatorname{Re}}\left\langle h,\theta\right\rangle ]J(h)dm(h).
\label{Levy symbol}%
\end{equation}
The function $\theta\rightarrow\widehat{L}(\theta)$ depends on the $a$-adic
distance $\left\Vert \theta\right\Vert _{a}$, and as a function of the
distance it is strictly increasing, zero at zero and infinity at infinity. In
particular, it satisfies the ultrametric inequality%
\[
\widehat{L}(\theta_{1}+\theta_{2})\leq\max\{\widehat{L}(\theta_{1}%
),\widehat{L}(\theta_{2})\}.
\]
Consider the Schr\"{o}dinger type operator
\begin{equation}
Hu=Lu+V\cdot u,\text{ \ } \label{Schroedinger operator}%
\end{equation}
where $V$ is a real measurable function (the potential). Our goal is to show
that under certain conditions on $V$ one may associate a self-adjoint operator
$H$ with the equation (\ref{Schroedinger operator}).

If the potential $V$ is locally bounded then $H$ $:\mathcal{D}\rightarrow
L^{2}(X,m)$ is a well-defined symmetric operator.

\begin{theorem}
\label{Schroedinger spectrum}Assume that $V$ is locally bounded. Then the
following is true:

$(i)$ The operator $H=L+V$ is essentially self-adjoint.\footnote{Recall that,
for the classical Schr\"{o}dinger operator $H=-\Delta+V$ in $\mathbb{R}^{n},$
this statement is \emph{not} true, unless $V$ satisfies a certain lower bound,
see \cite[Chapter II, Theorem 1.1 and Example 1.1]{BeresinShubin}.}

$(ii)$ Assume that $V(x)\rightarrow+\infty$ as $x\rightarrow\varpi$. Then the
operator $H$ has a compact resolvent. Consequently, the spectrum of $H$ is discrete.

$(iii)$ Assume that $V(x)\rightarrow0$ as $x\rightarrow\varpi$. Then the
essential spectrum of $H$ coincides with the spectrum of $L$. Thus, the
spectrum of $H$ is pure point and the negative part of the spectrum consists
of isolated eigenvalues of finite multiplicity.
\end{theorem}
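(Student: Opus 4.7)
Plan. The three claims are proved by classical self-adjoint perturbation techniques adapted to the hierarchical setting, exploiting the pure point spectrum of $L$ and the compact support of its eigenfunctions $f_{B}$.

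For (i), since $f\in\mathcal{D}$ has compact support and $V$ is locally bounded, $Vf\in L^{2}(X,m)$, so $(H,\mathcal{D})$ is well-defined and symmetric. Essential self-adjointness is obtained by approximation: fix an exhausting sequence of balls $B_{n}\uparrow X$ and set $V_{n}:=V\cdot\mathbf{1}_{B_{n}}$, which is bounded. Each $H_{n}:=L+V_{n}$ is a bounded symmetric perturbation of the self-adjoint extension of $L$, hence essentially self-adjoint on $\mathcal{D}$; for any $f\in\mathcal{D}$ the equality $Hf=H_{n}f$ holds once $n$ is large enough. To lift essential self-adjointness to $H$, I would take $u\in\ker(H^{\ast}\mp i\mathrm{I})$ and show $u=0$ by localizing $u$ through multiplication by $\mathbf{1}_{B_{n}}$; the ultrametric form of the jump kernel $J$ makes the cut-off interact cleanly with $L$ and reduces the problem to the bounded-potential case already handled by $H_{n}$.

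For (ii), since $V\to+\infty$ at $\varpi$ and $V$ is locally bounded, $V$ is bounded below on $X$; together with $L\geq 0$ this gives $H\geq-C$, so we may assume $H\geq 1$. Compactness of the resolvent is equivalent to a compact embedding of the form domain $\mathcal{F}_{H}$ into $L^{2}(X,m)$, where the form norm is $\|u\|_{\mathcal{F}_{H}}^{2}:=\langle Lu,u\rangle+\int V|u|^{2}\,dm+\|u\|_{2}^{2}$. Given a sequence bounded in $\mathcal{F}_{H}$, the hypothesis $V\to+\infty$ produces, for any $\varepsilon>0$, a ball $B$ with $V\geq\varepsilon^{-1}$ off $B$, so the $L^{2}$-mass outside $B$ is uniformly $O(\varepsilon)$. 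Inside $B$ I would expand each function in the orthonormal system $\{f_{B'}\}$: only finitely many $B'$ meeting $B$ carry eigenvalue $\lambda(B')\leq\Lambda$, while the contribution with $\lambda(B')>\Lambda$ has $L^{2}$ norm $\lesssim 1/\Lambda$; a diagonal extraction then yields a Cauchy subsequence in $L^{2}(X,m)$.

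For (iii), $V\to 0$ at $\varpi$ together with local boundedness implies $V\in L^{\infty}(X,m)$. Weyl's theorem will give $\sigma_{\mathrm{ess}}(H)=\sigma_{\mathrm{ess}}(L)=\sigma(L)$ once we verify that $V(L+\mathrm{I})^{-1}$ is compact; this follows by approximating $V$ in $L^{\infty}$ by $V_{n}=V\mathbf{1}_{B_{n}}$ (using $V\to 0$) and applying the compactness argument of part (ii) on $B_{n}$ to each $V_{n}(L+\mathrm{I})^{-1}$. Since $\sigma(L)\subseteq[0,+\infty)$, the negative spectrum of $H$ lies in $\sigma_{\mathrm{disc}}(H)$, i.e.\ consists of isolated eigenvalues of finite multiplicity, and the countability of $\sigma_{\mathrm{ess}}(H)$ rules out absolutely continuous spectrum. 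The hardest point is ruling out singular continuous spectrum and concluding pure point spectrum; the natural route is to perturb the complete $L$-eigensystem $\{f_{B}\}$ into a complete system of $H$-eigenfunctions via analytic perturbation theory on the finite-dimensional eigenspaces $\mathcal{H}(B')$, using the decay of $V$ at $\varpi$ to control the off-diagonal coupling between distant eigenspaces.
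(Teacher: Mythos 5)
Your part (ii) is sound (and in fact more detailed than the paper, which only sketches it by reference to \cite[Theorem X.3]{Vladimirov94}), and the first half of your part (iii) — relative compactness of $V$ with respect to $L$ plus Weyl's essential spectrum theorem — matches the paper's route, which verifies $L$-compactness of $V$ directly via the Kolmogorov--Riesz criterion. The serious gap is in (i). The entire difficulty of that statement is concentrated in your sentence ``the ultrametric form of the jump kernel $J$ makes the cut-off interact cleanly with $L$ and reduces the problem to the bounded-potential case,'' and you never carry this out. This is exactly the step that fails for $-\Delta+V$ on $\mathbb{R}^{n}$ (the theorem's own footnote is a warning that truncating $V$ and passing to a limit cannot suffice by itself: each truncated operator is essentially self-adjoint there too, yet the limit need not be). What makes the hierarchical case work, and what the paper actually proves, is a structural fact: writing $L=L_{O}+L_{O^{c}}$ for a ball $O$ containing the neutral element, the long-range part $L_{O^{c}}f=J(O^{c})(f-a\ast f)$ is a bounded symmetric operator, while the subspaces $\mathfrak{H}_{B}$ of functions supported on the balls $B$ of the horocycle of $O$ --- cosets of the subgroup $O$ --- reduce $H_{O}=L_{O}+V$; on each $\mathfrak{H}_{B}$ the potential is bounded, so $H_{O}$ is an orthogonal sum of essentially self-adjoint operators, and adding back the bounded $L_{O^{c}}$ finishes the proof. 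Your deficiency-vector localization could be completed along the same lines (the key estimate being that $[L,\mathbf{1}_{B}]$ has operator norm at most $\sup_{x}\int_{\{y:\,d(x,y)>\mathrm{diam}\,B\}}J(x,y)\,dm(y)<\infty$, by ultrametricity), but as written the decisive estimate and the ensuing argument are missing.

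In (iii) you also draw the boundary of the difficulty in the wrong place. Once $\sigma_{\mathrm{ess}}(H)=\sigma(L)=\{0\}\cup\{\lambda_{k}\}$ is known, the whole set $\sigma(H)=\sigma_{\mathrm{disc}}(H)\cup\sigma_{\mathrm{ess}}(H)$ is countable, and a nonatomic spectral measure --- absolutely continuous \emph{or} singular continuous --- cannot be supported on a countable set; hence every spectral measure is atomic and the spectrum is pure point with no further work. The ``hardest point'' you identify, ruling out singular continuous spectrum by analytic perturbation of the eigensystem $\{f_{B}\}$, is therefore a non-issue, and the route you propose for it is both unnecessary and not actually executed.
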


\begin{proof}
$(i)$ Let us choose an open ball $O$ which contains the neutral element and
write equation (\ref{Levy generator}) in the form%
\begin{align*}
Lf(x)  &  =\left(
{\displaystyle\int\limits_{O}}
+%
{\displaystyle\int\limits_{O^{c}}}
\right)  [f(x)-f(x+y)]J(y)dm(y)\\
&  =L_{O}f(x)+L_{O^{c}}f(x).
\end{align*}
We have $Hf=L_{O}f+L_{O^{c}}f+Vf$, where the operator $V$ is the operator of
multiplication by the function $V(x)$. The operator $L_{O^{c}}f=J(O^{c}%
)(f-a\ast f)$, where $a(y)=J(y)1_{O^{c}}(y)/J(O^{c}),$ is a bounded symmetric
operator in $L^{2}(X,m)$\ (as $f\rightarrow a\ast f$ \ is the operator of
convolution with probability measure $a(y)dm(y)$) and thus does not influence
self-adjointness. As $L_{O}$ is minus L\'{e}vy generator it is essentially
self-adjoint (one more way to make this conclusion is that the matrix of the
operator $L_{O}$\ is diagonal in the basis $\{f_{B}\}$ of eigenfunctions of
the operator $L$, see \cite{Kozyrev}).

For any ball $B$ which belongs to the same horocycle $\mathcal{H}$ as $O$ we
denote $\mathfrak{H}_{B}$ the subspace of $L^{2}(X,m)$ which consists of all
functions $f$ \ having support in $B$. Since $O$ is a subgroup of the Abelian
group $X$ and each ball $B\in\mathcal{H}$ is a coset (i.e. belongs to the
quotient group $[X:O]$), we conclude that $\mathfrak{H}_{B}$ is an invariant
subspace of the symmetric operator $H_{O}=L_{O}+V$. Moreover, $\mathfrak{H}%
_{B}$ reduces $H_{O}$.

The ultrametric space $X$ can be covered by a sequence of non-intersecting
balls $B_{n}$ (recall that due to the ultrametric property two balls of the
same diameter either coincide or do not intersect). This leads to the
orthogonal decomposition%
\[
L^{2}(X,m)=%
{\displaystyle\bigoplus\limits_{n}}
\mathfrak{H}_{B_{n}}%
\]
where each $\mathfrak{H}_{B_{n}}$ reduces $H_{O}$. The restriction of the
essentially self-adjoint operator $L_{O}$ to its invariant subspace
$\mathfrak{H}_{B_{n}}$ is an essentially self-adjoint operator, while the
restriction of the operator $V$ is bounded. Thus $H_{O}$ is essentially
self-adjoint as orthogonal sum of essentially self-adjoint operators $H_{O,n}%
$, the restriction of $H_{O}$ to $\mathfrak{H}_{B_{n}}$.

$(ii)$ The proof is similar to the one for the Schr\"{o}dinger operators given
in \cite[Theorem X.3]{Vladimirov94}; the main tools are boundedness from below
of the operator $H$ and the Riesz-Rellich compactness criteria for subsets of
$L^{2}(X,m)$.

$(iii)$ Let us show that the operator $V$ is $L-$\emph{compact. }Then, by
\cite[Theorem IV.5.35]{Kato}, the essential spectrums of the operators $H$ and
$L$ coincide. Recall that $L-$compactness means that if a sequence $\{u_{n}\}$
is such that both $\{u_{n}\}$ and $\{Lu_{n}\}$ are bounded then there exists a
subsequence $\{u_{n}^{\prime}\}\subset\{u_{n}\}$ such that the sequence
$\{Vu_{n}^{\prime}\}$ converges.

1. Denote $v_{n}=Lu_{n}+u_{n}.$ By assumption the sequence $\{v_{n}\}$ is
bounded and $u_{n}=\mathcal{R}_{1}v_{n}=r_{1}\ast v_{n}$. It follows that the
quantity%
\[
\left(
{\displaystyle\int}
\left\vert u_{n}(x+h)-u_{n}(x)\right\vert ^{2}dm(x)\right)  ^{1/2}%
\leq\left\Vert v_{n}\right\Vert _{L^{2}}%
{\displaystyle\int}
\left\vert r_{1}(z+h)-r_{1}(z)\right\vert dm(z)
\]
tends to zero uniformly in $n$ as $h$ tends to the neutral element. Thus, the
sequence $\{u_{n}\}$ consists of equicontinuous on the whole in $L^{2}(X,m)$
functions. The same is true for the sequence $\{Vu_{n}\}$. Indeed, for any
ball $B$ which contains the neutral element we write%
\[
\left(
{\displaystyle\int}
\left\vert V(x+h)u_{n}(x+h)-V(x)u_{n}(x)\right\vert ^{2}dm(x)\right)
^{1/2}\leq I+II+III,
\]
where%
\[
I=\left\Vert V\right\Vert _{L^{\infty}}\left(
{\displaystyle\int}
\left\vert u_{n}(x+h)-u_{n}(x)\right\vert ^{2}dm(x)\right)  ^{1/2},
\]%
\[
II=\left\Vert u_{n}\right\Vert _{L^{2}}\left(
{\displaystyle\int_{B}}
\left\vert V(x+h)-V(x)\right\vert ^{2}dm(x)\right)  ^{1/2},
\]%
\[
III=\left\Vert u_{n}\right\Vert _{L^{2}}\sup_{x\in B^{c}}\left\vert
V(x+h)-V(x)\right\vert .
\]
Clearly $I,II$ and $III$ tend to zero uniformly in $n$ as $h$ tends to the
neutral element and $B\nearrow X$.

2. The sequence $\{Vu_{n}\}$ consists of functions with equicontinuous
$L^{2}(X,m)$ integrals at infinity. Indeed, for any ball $B$ which contains
the neutral element we have%
\[%
{\displaystyle\int\limits_{B^{c}}}
\left\vert Vu_{n}(x)\right\vert ^{2}dm(x)\leq\left\Vert u_{n}\right\Vert
_{L^{2}}\sup_{x\in B^{c}}\left\vert V(x)\right\vert \rightarrow0
\]
uniformly in $n$ as $B\nearrow X.$

Thus, the sequence $\{Vu_{n}\}$ is bounded in $L^{2}(X,m)$, consists of
equicontinuous on whole in $L^{2}(X,m)$ \ functions with equicontinuous
$L^{2}(X,m)$ integrals at infinity. By the Riesz-Kolmogorov criterion of
compactness in $L^{2}(X,m)$, the set $\{Vu_{n}\}$ is compact, whence it
contains a convergent subsequence $\{Vu_{n}^{\prime}\},$\ as claimed.
\end{proof}

In the case when the ultrametric measure space $(X,d,m)$ is \emph{countably
infinite }the statement (ii) of\emph{ }Theorem \ref{Schroedinger spectrum} can
be complemented as follows.

\begin{theorem}
\label{Schroedinger spectrum'}Assume that $(X,d,m)$ is \emph{countably
infinite. }Then the following statements are equivalent:

\begin{description}
\item[(i)] The operator $H$ has a discrete spectrum.

\item[(ii)] $|V(x)|$ tend to infinity as $x\rightarrow\varpi.$
\end{description}
\end{theorem}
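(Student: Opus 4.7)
The starting observation is that in the countably infinite case $X$ is discrete, each singleton $\{x\}$ lies in $\mathcal{B}$, and the hierarchical Laplacian $L$ is in fact a bounded self-adjoint operator on $L^{2}(X,m)$. Indeed, by the spectral description following~(\ref{eigenfunction}), $Spec(L)=\{0\}\cup\{\lambda(B'):B\subset B'\text{ neighboring}\}$, and since $\lambda(B)=\sum_{T\supseteq B}C(T)$ is monotone decreasing in $B$, the norm $\|L\|$ is attained at the smallest ball $B_{\ast}'$ strictly larger than a singleton; this is finite by~(\ref{C1 condition}). Once this is in hand, both implications reduce to short compactness arguments.

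For $(ii)\Rightarrow(i)$ the proof of Theorem~\ref{Schroedinger spectrum}$(ii)$ does not apply directly because $V$ is no longer assumed bounded from below, so I would form the resolvent at a purely imaginary point instead. Fix $E>\|L\|$. The diagonal multiplication operator $(V-iE)^{-1}$ has entries $(V(x)-iE)^{-1}$ of modulus at most $1/E$ that tend to zero as $x\to\varpi$, hence is compact on $L^{2}(X,m)$, while the bound $\|L(V-iE)^{-1}\|\leq \|L\|/E<1$ makes $I+L(V-iE)^{-1}$ boundedly invertible. The factorization $H-iE=(I+L(V-iE)^{-1})(V-iE)$ then gives
\[
(H-iE)^{-1}=(V-iE)^{-1}\bigl(I+L(V-iE)^{-1}\bigr)^{-1},
\]
a product of a compact and a bounded operator, hence compact; so $H$ has compact resolvent and therefore discrete spectrum.

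For $(i)\Rightarrow(ii)$ I would argue by contraposition. If $|V(x)|\not\to\infty$, choose $C>0$ and pairwise distinct $x_{n}\to\varpi$ with $|V(x_{n})|\leq C$, and set $\psi_{n}:=m(x_{n})^{-1/2}\mathbf{1}_{\{x_{n}\}}$. These form an orthonormal sequence in $L^{2}(X,m)$, and $V\psi_{n}=V(x_{n})\psi_{n}$ gives $\|H\psi_{n}\|\leq \|L\|+C$. But discreteness of $Spec(H)$ is equivalent to compactness of the embedding of the graph-norm domain of $H$ into $L^{2}(X,m)$, which would force the bounded sequence $\{\psi_{n}\}$ with bounded image $\{H\psi_{n}\}$ to admit a norm-convergent subsequence in $L^{2}(X,m)$, contradicting $\|\psi_{n}-\psi_{m}\|=\sqrt{2}$ for $n\neq m$.

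The principal point of the argument, and the only place where some care is needed, is the initial boundedness of $L$ in the countable setting; once that is recorded, it is precisely this boundedness that lets us drop the semiboundedness hypothesis on $V$ used in Theorem~\ref{Schroedinger spectrum}$(ii)$ and manufacture a Weyl-type orthonormal sequence directly from a bounded subsequence of $V(x_{n})$.
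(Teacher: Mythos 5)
Your proof is correct, and both directions hinge on the same observation the paper starts from --- that $L$ is a bounded symmetric operator in the countably infinite case --- but the route is genuinely different. For $(ii)\Rightarrow(i)$ the paper works interval by interval: it modifies $V$ on the finite set where $V$ lands in a thickened interval $I'$, shows via the Neumann-series bound $\left\Vert L(V'-\lambda\mathrm{I})^{-1}\right\Vert \leq d/(d+1)<1$ that the modified operator $H'$ has no spectrum in $I$, and then invokes the finite-rank perturbation Lemma \ref{Weyl perturbation} to conclude that $H$ has only finitely many eigenvalues in $I$. You instead apply the same factorization $H-\lambda\mathrm{I}=(\mathrm{I}+L(V-\lambda\mathrm{I})^{-1})(V-\lambda\mathrm{I})$ once, at $\lambda=iE$ with $E>\left\Vert L\right\Vert$, where no truncation of $V$ is needed, and read off compactness of the resolvent from compactness of the diagonal operator $(V-iE)^{-1}$ (its entries vanish at $\varpi$ because in a proper countable ultrametric space every ball is finite, so $\{x:|V(x)|\leq M\}$ is finite for each $M$). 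This is shorter, avoids Lemma \ref{Weyl perturbation} altogether, and makes explicit why the semiboundedness needed in Theorem \ref{Schroedinger spectrum}$(ii)$ can be dropped; what it does not give, and the paper's interval-by-interval version does, is the localized count of eigenvalues per spectral gap that the paper reuses later. For $(i)\Rightarrow(ii)$ the two arguments are the same in substance --- both manufacture an orthonormal sequence of bounded energy supported at the points where $|V|$ stays bounded --- but the paper channels it through Courant's min-max principle for $H^{2}$ to exhibit a finite accumulation point of eigenvalues, while you invoke the standard equivalence of discrete spectrum with compactness of the resolvent, hence of the graph-norm embedding; both versions are correct.
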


\begin{proof}
$(ii)\Longrightarrow(i):$ Since $X$ is discrete $L$ is a bounded symmetric
operator, let us set $d:=\left\Vert L\right\Vert $. Suppose that $|V(x)|$ tend
to infinity as $x\rightarrow\varpi.$ Then for every given interval $I=[a,b]$
and its neighborhood $I^{\prime}=[a-d-1,b+d+1]$ there exist a finite set $A$
of points $x$ such that $V(x)\in I^{\prime}$. Let us choose $v\notin
I^{\prime}$ and define the operator $H^{\prime}=L+V^{\prime}$ where%
\[
V^{\prime}(x):=\left\{
\begin{array}
[c]{ccc}%
V(x) & \text{if} & x\notin A\\
v & \text{if} & x\in A
\end{array}
\right.  .
\]
The resolvent of the operator $V:u(x)\rightarrow V(x)u(x)$ is analytic inside
of $I^{\prime}$ and, as a result, the resolvent of $H^{\prime}$ is analytic
inside of $I$. Indeed, it is straightforward to show that
\[
\left\Vert L(V^{\prime}-\lambda\mathrm{I})^{-1}\right\Vert =\left\Vert
(V^{\prime}-\lambda\mathrm{I})^{-1}L\right\Vert \leq\frac{d}{d+1}<1,
\]
for any $\lambda\in I$. It follows that the operator%
\[
H^{\prime}-\lambda\mathrm{I}=(V^{\prime}-\lambda\mathrm{I})\left(
E+L(V^{\prime}-\lambda\mathrm{I})^{-1}\right)
\]
is invertible. This in turn implies that the operator $H^{\prime}$ has no
spectrum inside the interval $I$. But the difference $H-H^{\prime}$ is an
operator of finite rank. Hence the operator $H$ has (in the same interval $I$)
not more than finite number of eigenvalues, see Lemma \ref{Weyl perturbation}
below. Thus we have already proved that the spectrum of $H$ is discrete.

$(i)\Longrightarrow(ii):$ Suppose that the operator $H$ has a discrete
spectrum. Then clearly the spectrum of $H^{2}$ is also discrete. Let
$E_{1}\leq E_{2}\leq\cdots$ be the eigenvalues of $H^{2}$. Then by Courant's
$\min-\max$ principle%
\begin{equation}
E_{n}=\min_{\psi_{1},\ldots,\psi_{n}}\max\{(\psi,H^{2}\psi):\psi\in
span(\psi_{1},\ldots,\psi_{n}),\left\Vert \psi\right\Vert =1\}.
\label{Courant principle}%
\end{equation}
Assume that $|V(x)|$ does not tend to $+\infty$ as $x\rightarrow\varpi$. Then
there exists a sequence $\{x_{n}\}\subset X$ such that $\left\vert
V(x_{n})\right\vert \leq C$ for some $C>0$ and all $n\geq1$. It follows that%
\begin{equation}
(\psi,H^{2}\psi)\leq2(d^{2}+C^{2}),\text{ }\forall\psi\in span(\delta_{x_{1}%
},\delta_{x_{2}},\delta_{x_{3}},...),\left\Vert \psi\right\Vert =1.
\label{bounds}%
\end{equation}
Equations (\ref{Courant principle}) and (\ref{bounds}) imply that the interval
$[0,2(d^{2}+C^{2})]$ contains at list one limit point of the sequence
$\{E_{n}\}$, i.e. the essential spectrum of $H^{2}$ (equivalently of $H$) is
not empty. This fact contradicts the discreetness of the spectrum of $H^{2}$
(or $H$). This proves the second part of the theorem.
\end{proof}

In the continuous case the situation is not so obvious. In what follows we
restrict ourself by considering a class $\mathcal{K}$ of potentials of the
form $V=\sum_{B\in\mathcal{H}}\sigma(B)1_{B},$ where $\mathcal{H}$ is a fixed
horocycle. Let us select the following Hilbert subspaces of $L^{2}(X,m):$

$1.$ $\mathcal{L}_{+}=\mathrm{span}\{1_{B}:B\in\mathcal{H}\}$,

$2.$ $\mathcal{L}_{B}=\mathrm{span}\{f_{T}:T\varsubsetneq B\}$,

$3.$ $\mathcal{L}_{-}=L^{2}(X,m)\ominus\mathcal{L}_{+}=%
{\displaystyle\bigoplus_{B\in\mathcal{H}}}
\mathcal{L}_{B}$.

The following three lemmas can be proved by inspection.

\begin{lemma}
\label{Splitting lemma}The linear spaces $\mathcal{L}_{+},$ $\mathcal{L}_{B}$
and $\mathcal{L}_{-}$ are invariant subspaces for both operators $H$ and $L$.
Let $H_{+},H_{B}$ and $H_{-}$(resp. $L_{+},L_{B}$ and $L_{-}$) be the
restriction of the operator $H$ (resp. $L$) to $\mathcal{L}_{+},$
$\mathcal{L}_{B}$ and $\mathcal{L}_{-}$ respectively. The following properties
hold true:

$(i)$ $H=H_{+}\oplus H_{-}$,

$(ii)$ $H_{B}=L_{B}+\sigma(B)$,

$(iii)$ $H_{-}=%
{\displaystyle\bigoplus_{B\in\mathcal{H}}}
(L_{B}+\sigma(B))$.
\end{lemma}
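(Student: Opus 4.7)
The plan is to verify all three statements by reducing everything to the eigenfunction expansion of $L$ from Section~2.2 and to the disjointness of balls in the horocycle $\mathcal{H}$. Since both $L$ and the multiplication operator $V$ are self-adjoint and $H=L+V$, a closed subspace is invariant under $H$ as soon as it is invariant under $L$ and under $V$ separately; conversely, by self-adjointness, invariance of a closed subspace is equivalent to invariance of its orthogonal complement. This principle lets us freely pass between $\mathcal{L}_+$ and $\mathcal{L}_-$.

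First I would handle invariance under $L$. By construction $\mathcal{L}_B=\mathrm{span}\{f_T:T\subsetneq B\}$ is spanned by eigenvectors of $L$, so $\mathcal{L}_B$ and hence $\bigoplus_{B\in\mathcal{H}}\mathcal{L}_B$ are $L$-invariant. The identification $\mathcal{L}_-=L^2(X,m)\ominus\mathcal{L}_+=\bigoplus_{B\in\mathcal{H}}\mathcal{L}_B$ needs two things. Orthogonality is immediate: each $f_T$ with $T\subsetneq B$ is supported in $B$ and integrates to zero, so it is orthogonal to every $\mathbf{1}_{B'}$ with $B'\in\mathcal{H}$. Completeness follows from the telescoping identity
\[
\frac{\mathbf{1}_B}{m(B)}=\sum_{T\in\mathcal{B},\,T\supseteq B}f_T,
\]
valid in $L^2$ because $\|\mathbf{1}_T/m(T)\|_{L^2}^{2}=1/m(T)\to 0$ as $\mathrm{diam}(T)\to\infty$ (using non-compactness of $X$). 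It expresses each $\mathbf{1}_B$, $B\in\mathcal{H}$, as a series of eigenfunctions $f_T$ with $T\supseteq B$, so $\mathcal{L}_+$ is $L$-invariant and together $\mathcal{L}_+$ and $\mathcal{L}_-$ exhaust the complete eigenfunction system of $L$.

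For invariance under $V=\sum_{B\in\mathcal{H}}\sigma(B)\mathbf{1}_B$ I would use that balls in a horocycle are pairwise disjoint (two balls of the same diameter in an ultrametric space either coincide or do not intersect). Each generator $f_T$ of $\mathcal{L}_B$ has $\mathrm{supp}(f_T)\subseteq T\subsetneq B$, so every $f\in\mathcal{L}_B$ has support in $B$ and therefore $Vf=\sigma(B)f$. This immediately yields (ii), namely $H_B=L_B+\sigma(B)\mathrm{I}$, together with $V$-invariance of $\mathcal{L}_B$. For $f=\sum_{B\in\mathcal{H}}c_B\mathbf{1}_B\in\mathcal{L}_+$ (again by disjointness) we get $Vf=\sum_{B}c_B\sigma(B)\mathbf{1}_B\in\mathcal{L}_+$, so $V$ preserves $\mathcal{L}_+$ as well.

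Assembling the pieces, $H=L+V$ preserves both $\mathcal{L}_+$ and $\mathcal{L}_-$, which proves (i). Statement (ii) was established in the previous paragraph. Statement (iii) then follows because $\mathcal{L}_-$ splits orthogonally into the $\mathcal{L}_B$ with each summand preserved by $H$, and the restriction to $\mathcal{L}_B$ equals $L_B+\sigma(B)\mathrm{I}$. The only point requiring slightly more than bare inspection is the telescoping identity for $\mathbf{1}_B$ in the eigenfunction basis, which is what makes $\mathcal{L}_+$ coincide with the closed span of a well-identified family of eigenfunctions; the rest is a matter of bookkeeping with disjoint supports, as the authors' phrase \emph{by inspection} suggests.
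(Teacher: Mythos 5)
The paper gives no argument for this lemma at all (it is one of the three statements dismissed with ``can be proved by inspection''), so your write-up is not competing with an existing proof; judged on its own it is correct in structure and supplies exactly the right ingredients: pairwise disjointness of the balls of a horocycle, the fact that every $f\in\mathcal{L}_B$ is supported in $B$ and hence satisfies $Vf=\sigma(B)f$, and the self-adjointness principle letting you pass between a closed invariant subspace and its orthocomplement. Parts $(ii)$ and $(iii)$ and the $V$-invariance statements are fully justified.

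The one step where your logic is slightly misdirected is the completeness claim $\mathcal{L}_+\oplus\bigoplus_{B\in\mathcal{H}}\mathcal{L}_B=L^2(X,m)$. The telescoping identity $\mathbf{1}_B/m(B)=\sum_{T\supseteq B}f_T$ proves the inclusion $\mathcal{L}_+\subseteq\overline{\mathrm{span}}\{f_T: T\supseteq B,\ B\in\mathcal{H}\}$, but that is the wrong direction: it neither shows that a vector orthogonal to $\mathcal{L}_+$ and to every $\mathcal{L}_B$ must vanish, nor does it by itself give $L$-invariance of $\mathcal{L}_+$ (a closed subspace of an $L$-invariant subspace need not be $L$-invariant, and the individual $f_T$ are not mutually orthogonal within an eigenspace, so you cannot extract $\langle g,f_{T_k}\rangle=0$ from $\langle g,\mathbf{1}_B\rangle=0$). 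What actually closes the argument is the reverse, and even more elementary, observation: by properness, every ball $T$ containing some $B\in\mathcal{H}$ is a \emph{finite} disjoint union of horocycle balls, so $\mathbf{1}_T$ and hence $f_T$ lie in $\mathcal{L}_+$; since every ball is either strictly below or at/above the horocycle, the complete eigenfunction system $\{f_T\}$ is contained in $\mathcal{L}_+ + \sum_B\mathcal{L}_B$, which together with the orthogonality you already proved yields $\mathcal{L}_-=\bigoplus_B\mathcal{L}_B$. With that equality in hand, $L$-invariance of $\mathcal{L}_+$ follows from your own opening remark (invariance of $\mathcal{L}_-$ plus self-adjointness), and the telescoping identity becomes a pleasant but dispensable byproduct identifying $\mathcal{L}_+$ with the closed span of the eigenfunctions at and above the horocycle. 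This is a repair of one sentence, not of the proof's architecture.
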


Remind that $Lf_{B}=\lambda(B^{\prime})f_{B}$ for any open ball $B$. As $B$
converges to a singleton $\lambda(B^{\prime})\rightarrow+\infty$ whence
$L_{B}$ has discrete spectrum. By the homogenuity property $Spec(L_{A})$ is
the same for all $A$ $\in\mathcal{H}$, we denote it $\mathfrak{S}%
_{\mathcal{H}}$. We also set $\mathfrak{R}_{V}:\mathfrak{=}Range(V)$.

\begin{lemma}
\label{Splitting spectrum}In the notation from above%
\[
Spec(H_{-})=\overline{\mathfrak{S}_{\mathcal{H}}+\mathfrak{R}_{V}}.
\]
In particular, the operator $H_{-}$ has a pure point (not necessary discrete) spectrum.
\end{lemma}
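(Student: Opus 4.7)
The plan is to reduce everything to the orthogonal decomposition $H_{-} = \bigoplus_{B\in\mathcal{H}} (L_{B}+\sigma(B))$ furnished by part (iii) of Lemma \ref{Splitting lemma}, and then invoke the standard fact that the spectrum of a direct sum of self-adjoint operators is the closure of the union of the individual spectra.

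First I would fix $B\in\mathcal{H}$ and analyse the single block $H_{B}=L_{B}+\sigma(B)\mathrm{I}$. Since the eigenfunctions $f_{T}$ with $T\varsubsetneq B$ span $\mathcal{L}_{B}$ and satisfy $Lf_{T}=\lambda(T')f_{T}$ where $T'$ is the parent of $T$, the restriction $L_{B}$ acts diagonally in this orthonormal basis with eigenvalues accumulating only at $+\infty$; hence $L_{B}$ has pure point (in fact discrete) spectrum, which by the homogeneity of $(X,d,m)$ is the same for every $B\in\mathcal{H}$, namely $\mathfrak{S}_{\mathcal{H}}$. Because $\sigma(B)$ is a scalar, $H_{B}$ is also diagonal in $\{f_{T}:T\varsubsetneq B\}$, so
\[
Spec(H_{B})=\mathfrak{S}_{\mathcal{H}}+\sigma(B),
\]
and $H_{B}$ is pure point with the same eigenvectors.

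Next I would assemble the direct sum. For a countable family of self-adjoint operators $(T_{n})$ on Hilbert spaces $\mathcal{K}_{n}$ with $T=\bigoplus_{n}T_{n}$ on $\mathcal{K}=\bigoplus_{n}\mathcal{K}_{n}$, the resolvent $(T-\lambda\mathrm{I})^{-1}$ exists and is bounded iff $\lambda\notin Spec(T_{n})$ for every $n$ \emph{and} $\sup_{n}\|(T_{n}-\lambda\mathrm{I})^{-1}\|<\infty$; this is equivalent to $\lambda$ lying outside the closure of $\bigcup_{n}Spec(T_{n})$. Applying this to the decomposition $\mathcal{L}_{-}=\bigoplus_{B\in\mathcal{H}}\mathcal{L}_{B}$ and $H_{-}=\bigoplus_{B}H_{B}$ yields
\[
Spec(H_{-})=\overline{\bigcup_{B\in\mathcal{H}}Spec(H_{B})}=\overline{\bigcup_{B\in\mathcal{H}}(\mathfrak{S}_{\mathcal{H}}+\sigma(B))}=\overline{\mathfrak{S}_{\mathcal{H}}+\mathfrak{R}_{V}},
\]
which is the first assertion. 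For the pure point property, the union over $B\in\mathcal{H}$ of the orthonormal bases $\{f_{T}:T\varsubsetneq B\}$ is an orthonormal basis of $\mathcal{L}_{-}$ consisting entirely of eigenvectors of $H_{-}$, so $H_{-}$ admits a complete system of eigenfunctions and hence has pure point spectrum (which need not be discrete because the eigenvalues $\kappa^{r-1}+\sigma(B)$ may accumulate as $B$ ranges over $\mathcal{H}$).

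The main technical point to be careful about is the direct-sum spectrum formula: one must verify that $\lambda\notin\overline{\bigcup_{n}Spec(T_{n})}$ actually forces a uniform bound on $\|(T_{n}-\lambda\mathrm{I})^{-1}\|$, which follows from the spectral-theorem identity $\|(T_{n}-\lambda\mathrm{I})^{-1}\|=\mathrm{dist}(\lambda,Spec(T_{n}))^{-1}$. Apart from this routine verification the argument is essentially bookkeeping.
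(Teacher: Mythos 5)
Your proof is correct and is essentially the argument the paper intends (the paper only remarks that this lemma ``can be proved by inspection,'' the inspection being exactly your combination of Lemma \ref{Splitting lemma}(iii) with the standard identity $Spec(\bigoplus_n T_n)=\overline{\bigcup_n Spec(T_n)}$ for self-adjoint summands). The only cosmetic slip is calling $\{f_T : T\varsubsetneq B\}$ an orthonormal basis --- the $f_T$ within a single eigenspace are neither orthogonal nor linearly independent, so one should orthonormalize inside each eigenspace first --- but this does not affect the argument.
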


Let us choose in each ball $B\in\mathcal{H}$ an element $a_{B}$ and consider a
discrete ultrametric space $(X^{\prime},m^{\prime},d^{\prime})$ with
$X^{\prime}=\{a_{B}:B\in\mathcal{H}\}$ induced by $(X,m,d)$.

\begin{lemma}
\label{H-plus lemma}The operator $L_{+}$ can be identified with certain
hierarchical Laplacian $L^{\prime}$ acting on $(X^{\prime},m^{\prime
},d^{\prime})$, respectively the operator $H_{+}$ can be identified with
certain Schr\"{o}dinger type operator $H^{\prime}=L^{\prime}+V^{\prime}$ with
potential $V^{\prime}=\sum_{a\in X^{\prime}}V(a)\delta_{a}$.
\end{lemma}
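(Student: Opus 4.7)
The plan is to build an explicit unitary isomorphism $U:\mathcal{L}_{+}\to L^{2}(X',m')$ that simultaneously conjugates $L_{+}$ to a hierarchical Laplacian $L'$ on $(X',d',m')$ and the restriction of $V$ to $\mathcal{L}_+$ to multiplication by $V'$. By homogeneity, every $B\in\mathcal H$ has the same $m$-measure $m_{\mathcal H}$; declare $m'(\{a_B\}):=m_{\mathcal H}$ and $U\mathbf{1}_B:=\mathbf{1}_{\{a_B\}}$, extended by linearity. Since the $\mathbf{1}_B$ form an orthogonal family spanning $\mathcal{L}_+$, the choice of $m'$ yields $\|Uf\|_{L^2(X',m')}=\|f\|_{L^2(X,m)}$ on finite linear combinations, so $U$ extends to a unitary isomorphism.

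Next I would transport the tree of balls: each ball $T'\in\mathcal B_{X'}$ has the form $\{a_B:B\subseteq T\}$ for a unique $T\in\mathcal B_X$ with $T\supseteq B$ for some $B\in\mathcal H$; sending $T'\mapsto T$ defines a bijection $\Phi$ between $\mathcal B_{X'}$ and the ``upper'' sub-tree $\{T\in\mathcal B_X:T\supseteq B\text{ for some }B\in\mathcal H\}$. Homogeneity forces the immediate parent in $\mathcal B_X$ of any $B\in\mathcal H$ to be partitioned entirely by balls from $\mathcal H$, so $\Phi$ preserves both inclusion and the nearest-neighbor relation, and $m'(T')=m(\Phi(T'))$. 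Setting $C'(T'):=C(\Phi(T'))$ one gets $\lambda'(T')=\sum_{S'\supseteq T'}C'(S')=\lambda(\Phi(T'))<\infty$, so the data $(\mathcal B_{X'},C',m')$ determines a bona fide homogeneous hierarchical Laplacian $L'$ via \eqref{hlaplacian}.

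Finally I would verify the intertwinings. By Lemma \ref{Splitting lemma}, $\mathcal L_+$ reduces $L$, and an orthogonal eigenbasis of $L_+$ is supplied by the functions $f_T=\mathbf{1}_T/m(T)-\mathbf{1}_{T''}/m(T'')$ indexed by nearest-neighbor pairs $T\subsetneq T''$ in $\mathcal B_X$ with $T\supseteq B$ for some $B\in\mathcal H$. A direct calculation shows that $U$ carries this basis to the analogous eigenbasis of $L'$ associated to the $\Phi$-images of those pairs, with matching eigenvalues $\lambda(T'')=\lambda'(\Phi^{-1}(T''))$; hence $UL_+U^{-1}=L'$. For the potential, $V\mathbf{1}_B=\sigma(B)\mathbf{1}_B$, so conjugation by $U$ turns $V|_{\mathcal L_+}$ into multiplication by the function on $X'$ sending $a_B\mapsto\sigma(B)=V(a_B)$, namely $V'=\sum_{a\in X'}V(a)\delta_a$. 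Consequently $UH_+U^{-1}=L'+V'=H'$.

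The main obstacle is combinatorial rather than analytic: one must carefully identify $\mathcal B_{X'}$ with the upper sub-tree of $\mathcal B_X$ above $\mathcal H$, verify that the inherited data $(C',m')$ satisfies the summability condition \eqref{C1 condition}, and confirm that $\Phi$ respects the nearest-neighbor relation so that the two eigenbases align. Once these book-keeping points are settled, the intertwining of operators is a routine comparison on a distinguished basis.
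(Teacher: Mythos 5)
Your proof is correct and carries out exactly the identification the paper has in mind: the authors state that Lemmas \ref{Splitting lemma}--\ref{H-plus lemma} ``can be proved by inspection'' and supply no further argument, and your unitary $U:\mathbf{1}_{B}\mapsto\mathbf{1}_{\{a_{B}\}}$ (with $m'(\{a_{B}\})=m(B)$), together with the transport of the ball structure and of the data $C$ along the upper sub-tree above the horocycle $\mathcal{H}$, is the natural way to perform that inspection. All the book-keeping points you flag (the bijection $\Phi$ with the upper sub-tree, preservation of the nearest-neighbor relation, $\lambda'(T')=\lambda(\Phi(T'))<\infty$, and the matching of the eigenbases $f_{T}\mapsto f'_{\Phi^{-1}(T)}$) do check out, so there is no gap.
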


\begin{theorem}
For $V\in\mathcal{K}$ the statements $(i)$ and $(ii)$ of Theorem
\ref{Schroedinger spectrum'} are related by the implication
$(i)\Longrightarrow(ii)$. The inverse implication $(ii)\Longrightarrow(i)$
holds true if and only if the set $\mathfrak{S}_{\mathcal{H}}+\mathfrak{R}%
_{V}$ has no accumulating points.
\end{theorem}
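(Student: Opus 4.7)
The plan is to exploit the orthogonal splitting $H = H_+ \oplus H_-$ from Lemma \ref{Splitting lemma}. Since a direct sum of self-adjoint operators has discrete spectrum if and only if each summand does (because $\mathrm{Spec}(H_+ \oplus H_-) = \mathrm{Spec}(H_+) \cup \mathrm{Spec}(H_-)$ and the compact-resolvent characterization of discreteness is closed under finite direct sums), the problem reduces to characterizing separately when $H_+$ and $H_-$ have discrete spectrum. The two summands are handled by the two preceding lemmas, which pin down their spectra in very different ways.

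For $H_+$, Lemma \ref{H-plus lemma} identifies $H_+$ with a Schr\"{o}dinger-type operator $H' = L' + V'$ on the countably infinite discrete ultrametric space $(X', d', m')$, where $V'(a_B) = \sigma(B)$. Under this identification the ideal point $\varpi$ of $X$ corresponds to the point at infinity of $X'$, so the condition $|V(x)| \to \infty$ as $x \to \varpi$ in $X$ is equivalent to $|V'(a)| \to \infty$ as $a \to \varpi$ in $X'$. Applying Theorem \ref{Schroedinger spectrum'} to the discrete operator $H'$ then yields that $H_+$ has discrete spectrum if and only if $|V| \to \infty$ at $\varpi$. For $H_-$, Lemma \ref{Splitting spectrum} gives $\mathrm{Spec}(H_-) = \overline{\mathfrak{S}_\mathcal{H} + \mathfrak{R}_V}$, which is pure point; since the accumulation points of a set and of its closure coincide, $H_-$ has discrete spectrum exactly when $\mathfrak{S}_\mathcal{H} + \mathfrak{R}_V$ has no accumulation points in $\mathbb{R}$.

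Combining the two: the forward direction $(i) \Rightarrow (ii)$ is immediate, since discreteness of $\mathrm{Spec}(H)$ forces discreteness of $\mathrm{Spec}(H_+)$, and hence $|V| \to \infty$ by the analysis above. Conversely, assuming $(ii)$, $H_+$ automatically has discrete spectrum, so $H = H_+ \oplus H_-$ has discrete spectrum if and only if $H_-$ does, which by the spectral description is precisely the condition that $\mathfrak{S}_\mathcal{H} + \mathfrak{R}_V$ has no accumulation points. The main point requiring care is the identification in Lemma \ref{H-plus lemma}: one has to verify that the bijection $B \mapsto a_B$ faithfully matches the notions of ``tending to $\varpi$'' on $X$ and $X'$, so that Theorem \ref{Schroedinger spectrum'} applied on $X'$ gives back exactly condition (ii) as stated on $X$. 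Everything else reduces to the observation that discreteness of $\mathrm{Spec}(H)$ is equivalent to discreteness of the spectrum of each orthogonal summand.
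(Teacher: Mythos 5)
Your proof is correct and follows essentially the same route as the paper: the splitting $H=H_{+}\oplus H_{-}$, the identification of $H_{+}$ with a discrete-space operator $H^{\prime}$ to which Theorem \ref{Schroedinger spectrum'} applies, and the description $Spec(H_{-})=\overline{\mathfrak{S}_{\mathcal{H}}+\mathfrak{R}_{V}}$ from Lemma \ref{Splitting spectrum}. If anything, you spell out more carefully than the paper does why, under $(ii)$, discreteness of $Spec(H)$ reduces exactly to the absence of accumulation points of $\mathfrak{S}_{\mathcal{H}}+\mathfrak{R}_{V}$.
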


\begin{proof}
If we assume that $Spec(H)$ is discrete, then the operator $H_{+}$ (whence the
operator $H^{\prime}$) has a discrete spectrum. Applying Theorem
\ref{Schroedinger spectrum'} we conclude that $|V(x)|\rightarrow+\infty$, i.e.
$(i)\Longrightarrow(ii)$ as claimed.

If the sequence $\{\sigma(B):B\in\mathcal{H}\}$ containes a subsequence
$\sigma(B_{k})\rightarrow-\infty$ then it may well happen that the set
$Spec(H_{-})=\overline{\mathfrak{S}_{\mathcal{H}}\mathfrak{+R}_{V}}$ will
contain a number of accumulating points, i.e. $Spec(H)$ in this case is not
discrete. In particular, $(ii)\Longrightarrow(i)$ if and only if the set
$\mathfrak{S}_{\mathcal{H}}\mathfrak{+R}_{V}$ has no accumulating points.
\end{proof}

\section{Rank one perturbations}

In this section we assume that the homogeneous ultrametric measure space
$(X,d,m)$ is \emph{countably infinite}. In this case $X$ can be identified
with a countable Abelian group $G$ equipped with an increasing sequence
$\{G_{n}\}_{n\in\mathbb{N}}$ of finite subgroups such that $\cap G_{n}=\{0\}$
and $\cup G_{n}=G$. Each ball in this ultrametric space is a set of the form
$g+G_{n}$ for some $g$ and $n$. As an example one can consider the group
$G=\mathbb{Z}(p_{1})\oplus$ $\mathbb{Z}(p_{2})\oplus\ldots$, weak sum of
cyclic groups, equipped with the sequence of its subgroups $G_{n}%
\simeq\mathbb{Z}(p_{1})\oplus\mathbb{Z}(p_{2})\oplus\ldots\oplus
\mathbb{Z}(p_{n})$.

Let $L$ be a homogeneous hierarchical Laplacian. We study spectral properties
of the Schr\"{o}dinger type operator $H=L+V$ with potential $V(x)=-\sigma
\delta_{a}(x)$, $\sigma>0$. Clearly $H$ can be written in the form%
\[
Hf(x)=Lf(x)-\sigma(f,\delta_{a})\delta_{a}(x),
\]
that is, $H$ can be regarded as a rank one perturbation of the operator $L$.
In this connection let us recall an abstract form of the Simon-Wolff theorem
\cite[Theorems 2 and 2']{SimonWolff} about pure point spectrum of rank one perturbations.

\paragraph{The Simon-Wolff criterion}

Let $A$ be a self-adjoint operator with simple spectrum on a Hilbert space
$\mathcal{H}$, and let $\varphi$ be a cyclic vector for $A$, that is,
$\{(A-\lambda)^{-1}\varphi$ $|$ $\mathfrak{\operatorname{Im}\lambda>}0\}$ is a
total set for $\mathcal{H}$. By the spectral theorem, $\mathcal{H}$ is unitary
equivalent to $L^{2}(\mathbb{R},\mu_{0})$ in such a way that $A$ is
multiplication by $x$ with cyclic vector $\varphi\equiv1$. Here $\mu_{0}$ is
the spectral measure of $\varphi$ for $A$. Let $H=A+\sigma(\varphi
,\cdot)\varphi$ be a rank one perturbation of the operator $A$. Set%
\[
F(x):=\int(x-y)^{-2}d\mu_{0}(y)=\lim_{\epsilon\rightarrow0}\left\Vert
(A-(x+i\epsilon)\mathrm{I})^{-1}\varphi\right\Vert ^{2}.
\]

\begin{theorem}
\label{SiWo}Fix an open interval $]a,b[$. The following are equivalent:

$(i)$ For a.e. $\sigma$, $H$ has only pure point spectrum in $]a,b[$.

$(ii)$ For a.e. $x\in]a,b[$, $F(x)<\infty$.
\end{theorem}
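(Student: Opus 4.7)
I would follow the classical Aronszajn--Donoghue / Simon--Wolff route. By the spectral theorem, identify $\mathcal{H}$ with $L^{2}(\mathbb{R},\mu_{0})$, the operator $A$ with multiplication by $x$, and the cyclic vector $\varphi$ with the constant function $1$. Introduce the Borel (Herglotz) transform
\[
G(z) = \int \frac{d\mu_{0}(y)}{y-z}, \qquad z \in \mathbb{C}\setminus\mathbb{R}.
\]
Standard boundary-value theory (Fatou, de la Vallée-Poussin) says that the absolutely continuous part of $\mu_{0}$ has Lebesgue density $(1/\pi)\operatorname{Im} G(x+i0)$, while the singular part is concentrated on $\{x : \operatorname{Im} G(x+i\epsilon)\to\infty\}$. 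Applying the second resolvent identity to the rank one perturbation $H = A + \sigma(\varphi,\cdot)\varphi$ yields the Aronszajn formula for the spectral measure $\mu_{\sigma}$ of $\varphi$ for $H$:
\[
G_{\sigma}(z) := \langle\varphi,(H-z)^{-1}\varphi\rangle = \frac{G(z)}{1+\sigma G(z)}.
\]

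The bridge to the hypothesis is the monotone-convergence identity
\[
F(x) = \int \frac{d\mu_{0}(y)}{(y-x)^{2}} = \lim_{\epsilon\downarrow 0}\frac{\operatorname{Im} G(x+i\epsilon)}{\epsilon},
\]
so $F(x)<\infty$ forces $\operatorname{Im} G(x+i0)=0$ and $G(x+i0)\in\mathbb{R}$; moreover, $x$ is an eigenvalue of $H$ precisely when $F(x)<\infty$ and $1+\sigma G(x)=0$. From this the implication $(ii)\Rightarrow(i)$ is almost immediate: if $F<\infty$ a.e.\ in $]a,b[$, then $\operatorname{Im} G_{\sigma}(x+i0) = (\operatorname{Im} G)|1+\sigma G|^{-2} = 0$ a.e.\ in $]a,b[$, so $\mu_{\sigma}$ has no absolutely continuous mass there; the only place mass can concentrate is on the at-most-countable solution set of $1+\sigma G(x)=0$, which is exactly the set of eigenvalues. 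Hence $\mu_{\sigma}$ is pure point on $]a,b[$ for every $\sigma\ne 0$.

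For the converse I would argue by contradiction: suppose $E:=\{x\in\,]a,b[\, : F(x)=\infty\}$ has positive Lebesgue measure. The key ingredient is the Simon--Wolff spectral averaging identity
\[
\int_{\mathbb{R}}\mu_{\sigma}(S)\,d\sigma = |S|
\]
valid for every Borel set $S\subseteq\mathbb{R}$ (with $|\cdot|$ denoting Lebesgue measure), which follows from the Aronszajn formula by a Fubini computation together with the fact that, for each fixed $x$, the map $\sigma\mapsto G_{\sigma}(x+i0)$ is a M\"obius transformation preserving the Herglotz class. Taking $S=E$ gives $\mu_{\sigma}(E)>0$ for a positive-measure set of $\sigma$. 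No point of $E$ can be an eigenvalue of $H$, because $(y-x)^{-1}\notin L^{2}(\mu_{0})$ whenever $F(x)=\infty$; so the nonzero mass of $\mu_{\sigma}$ on $E$ must be purely continuous, contradicting pure-point spectrum. Hence $|E|=0$, i.e.\ $(ii)$ holds. The main technical obstacle is the spectral averaging identity, which I would invoke as a black box from \cite{SimonWolff}; everything else reduces to bookkeeping with the Aronszajn formula and the boundary theory of Herglotz functions.
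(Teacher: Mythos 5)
First, note that the paper does not actually prove this theorem: it is quoted from Simon and Wolff \cite{SimonWolff} as a known criterion (only the reduction from the cyclic to the general case is commented on), so there is no internal proof to compare against. Judged on its own terms, your proposal handles the direction $(i)\Rightarrow(ii)$ correctly: the spectral averaging identity $\int\mu_{\sigma}(S)\,d\sigma=|S|$ applied to $E=\{x\in\,]a,b[\,:F(x)=\infty\}$, combined with the observation that no point of $E$ can carry a point mass of $\mu_{\sigma}$ (since $(y-x)^{-1}\notin L^{2}(\mu_{0})$ there), is exactly the standard Simon--Wolff argument.

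The direction $(ii)\Rightarrow(i)$, however, has a genuine gap. You claim that once the absolutely continuous part is killed, ``the only place mass can concentrate is on the at-most-countable solution set of $1+\sigma G(x)=0$, which is exactly the set of eigenvalues.'' This is not so. The singular part of $\mu_{\sigma}$ is concentrated on $\{x:\lim_{\epsilon\downarrow0}\operatorname{Im}G_{\sigma}(x+i\epsilon)=\infty\}$, and on the Lebesgue-null set $\{F=\infty\}$ the boundary value $G(x+i0)$ need not exist; a singular \emph{continuous} component of $\mu_{\sigma}$ can perfectly well be supported on that null set --- nullity does not prevent a singular measure from charging it. The points of $\{G(x+i0)=-1/\sigma\}$ are guaranteed to be (countably many) eigenvalues only when \emph{in addition} $F(x)<\infty$ at those points. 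Your stated conclusion --- pure point ``for every $\sigma\neq0$'' --- is in fact false in general: there are examples (del Rio--Makarov--Simon) with $F<\infty$ a.e.\ in which a dense $G_{\delta}$ of couplings yields purely singular continuous spectrum, which is precisely why the theorem is an ``a.e.\ $\sigma$'' statement. The repair is to use the same spectral averaging identity you already invoke for the converse: $\int\mu_{\sigma}\bigl(\{F=\infty\}\cap\,]a,b[\,\bigr)\,d\sigma=\bigl|\{F=\infty\}\cap\,]a,b[\,\bigr|=0$, hence $\mu_{\sigma}(\{F=\infty\}\cap\,]a,b[\,)=0$ for a.e.\ $\sigma$, and for those $\sigma$ the singular continuous part in $]a,b[$ vanishes, leaving only the pure point part.
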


In general, if $\mathcal{H}_{0}$ is the closed subspace generated by vectors
$\{(A-\lambda\mathrm{I})^{-1}\varphi$ $|\mathfrak{\operatorname{Im}\lambda
>}0\}$, then its orthogonal complement $(\mathcal{H}_{0})^{\bot}$ is an
invariant space for $H$ and $H=A$ on $(\mathcal{H}_{0})^{\bot}$. Thus, the
extension from the cyclic to general case is clear.

The function $\varphi=\delta_{a}$ is not a cyclic vector for $L$ because the
operator $L$ has many compactly supported eigenfunctions $\phi$ having support
outside of $a$. Indeed, for any such $\phi$, for all $\mathfrak{\lambda}\in%
\mathbb{C}
$ with $\mathfrak{\operatorname{Im}\lambda>}0$ and for some $k$ we will have%
\[
((L-\lambda\mathrm{I})^{-1}\delta_{a},\phi)=(\delta_{a},(L-\overline{\lambda
}\mathrm{I})^{-1}\phi)=(\delta_{a},(\lambda_{k}-\overline{\lambda})^{-1}%
\phi)=0.
\]
We use the Krein type identity below to show that the spectrum of the operator
$H=L-\sigma\delta_{a}$ is pure point for all $\sigma$. Let $\psi
(x)=\mathcal{R}(\lambda,x,y)$ be the solution of the equation%
\[
L\psi(x)-\lambda\psi(x)=\delta_{y}(x).
\]
Let $\psi_{V}(x)=\mathcal{R}_{V}(\lambda,x,y)$ be the solution of the equation%
\[
H\psi_{V}(x)-\lambda\psi_{V}(x)=\delta_{y}(x).
\]
Notice that $L$ and $H$ are symmetric operators whence both $(x,y)\rightarrow
\mathcal{R}(\lambda,x,y)$ and \ $(x,y)\rightarrow\mathcal{R}_{V}(\lambda,x,y)$
are symmetric functions.

\begin{theorem}
\label{Krein identity I}In the notation introduced above
\begin{equation}
\mathcal{R}_{V}(\lambda,x,y)=\mathcal{R}(\lambda,x,y)+\frac{\sigma
\mathcal{R}(\lambda,x,a)\mathcal{R}(\lambda,a,y)}{1-\sigma\mathcal{R}%
(\lambda,a,a)}, \label{Krein}%
\end{equation}%
\begin{equation}
\mathcal{R}_{V}(\lambda,a,y)=\frac{\mathcal{R}(\lambda,a,y)}{1-\sigma
\mathcal{R}(\lambda,a,a)} \label{Krein I}%
\end{equation}
and%
\begin{equation}
\mathcal{R}_{V}(\lambda,a,a)=\frac{\mathcal{R}(\lambda,a,a)}{1-\sigma
\mathcal{R}(\lambda,a,a)}. \label{Krein I'}%
\end{equation}

\end{theorem}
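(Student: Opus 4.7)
The plan is to derive (\ref{Krein}) from the second resolvent identity, then obtain (\ref{Krein I}) and (\ref{Krein I'}) as immediate specializations. Since $V$ is the rank-one operator $Vf = -\sigma\langle f,\delta_{a}\rangle\delta_{a}$ (with $m(\{a\})$ absorbed into the normalization of $\delta_a$), we are in the standard setting of a rank-one perturbation of a self-adjoint operator, and the identity is essentially Krein's formula. Before computing, I would note that for $\lambda$ with $\operatorname{Im}\lambda>0$ both resolvents $\mathcal{R}(\lambda)=(L-\lambda\mathrm{I})^{-1}$ and $\mathcal{R}_{V}(\lambda)=(H-\lambda\mathrm{I})^{-1}$ exist as bounded operators on $L^{2}(X,m)$ (this uses Theorem~\ref{Schroedinger spectrum}, which says $H$ is essentially self-adjoint), and that the denominator $1-\sigma\mathcal{R}(\lambda,a,a)$ is non-zero off the real axis.

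First I would write down the second resolvent identity in operator form,
\begin{equation*}
\mathcal{R}_{V}(\lambda)=\mathcal{R}(\lambda)-\mathcal{R}(\lambda)\,V\,\mathcal{R}_{V}(\lambda)
=\mathcal{R}(\lambda)+\sigma\,\mathcal{R}(\lambda)\,\bigl\langle\mathcal{R}_{V}(\lambda)\,\cdot\,,\,\delta_{a}\bigr\rangle\,\delta_{a}.
\end{equation*}
Applying both sides to $\delta_{y}$, taking the value at the point $x$, and using that the kernel of $\mathcal{R}(\lambda)\delta_{a}$ at $x$ is $\mathcal{R}(\lambda,x,a)$ while $\langle\mathcal{R}_{V}(\lambda)\delta_{y},\delta_{a}\rangle=\mathcal{R}_{V}(\lambda,a,y)$ (by symmetry of the kernel), I obtain
\begin{equation*}
\mathcal{R}_{V}(\lambda,x,y)=\mathcal{R}(\lambda,x,y)+\sigma\,\mathcal{R}(\lambda,x,a)\,\mathcal{R}_{V}(\lambda,a,y).
\end{equation*}
Setting $x=a$ in this relation and solving the resulting scalar equation yields formula (\ref{Krein I}); substituting (\ref{Krein I}) back into the displayed relation gives (\ref{Krein}); and putting $x=y=a$ in either formula gives (\ref{Krein I'}).

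As a sanity check (and an alternative derivation that avoids talking about off-diagonal matrix elements), I would verify directly that the function defined by the right-hand side of (\ref{Krein}) satisfies $(H-\lambda\mathrm{I})\psi_{V}(\cdot)=\delta_{y}$: applying $L-\lambda\mathrm{I}$ term-wise produces $\delta_{y}(x)+\frac{\sigma\mathcal{R}(\lambda,a,y)}{1-\sigma\mathcal{R}(\lambda,a,a)}\delta_{a}(x)$, while a short calculation shows $\psi_{V}(a)=\mathcal{R}(\lambda,a,y)/(1-\sigma\mathcal{R}(\lambda,a,a))$, so the potential term $-\sigma\delta_{a}(x)\psi_{V}(x)$ exactly cancels the extra $\delta_{a}$ contribution.

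The only delicate point, and what I would watch for as the main obstacle, is the well-posedness of the operator manipulations: one must know that $\delta_{a}\in L^{2}(X,m)$ (which holds precisely because $X$ is countably infinite, so singletons have positive mass), that $\mathcal{R}(\lambda,a,a)$ is a genuine number rather than a formal quantity, and that the denominator $1-\sigma\mathcal{R}(\lambda,a,a)$ does not vanish in the regime where the identity is being applied. Once these are in place, the identities (\ref{Krein})--(\ref{Krein I'}) follow by the brief algebra sketched above.
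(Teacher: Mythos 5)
Your proposal is correct and follows essentially the same route as the paper: the second resolvent identity applied to $\delta_{y}$ is exactly the paper's step of rewriting $H\psi_{V}-\lambda\psi_{V}=\delta_{y}$ as $L\psi_{V}-\lambda\psi_{V}=\delta_{y}+\sigma\delta_{a}\psi_{V}(a)$ and applying $(L-\lambda\mathrm{I})^{-1}$, yielding the same intermediate relation $\mathcal{R}_{V}(\lambda,x,y)=\mathcal{R}(\lambda,x,y)+\sigma\mathcal{R}_{V}(\lambda,a,y)\mathcal{R}(\lambda,x,a)$, after which the specialization at $x=a$ and back-substitution coincide with the paper's argument. Your added remarks on well-posedness ($\delta_{a}\in L^{2}$, non-vanishing of the denominator off the real axis) are sensible but do not change the substance.
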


\begin{proof}
We have%
\begin{align*}
L\psi_{V}(x)-\lambda\psi_{V}(x)  &  =\delta_{y}(x)+\sigma\delta_{a}(x)\psi
_{V}(x)\\
&  =\delta_{y}(x)+\sigma\delta_{a}(x)\psi_{V}(a).
\end{align*}
It follows that%
\begin{equation}
\psi_{V}(x)=\mathcal{R}(\lambda,x,y)+\sigma\psi_{V}(a)\mathcal{R}%
(\lambda,x,a). \label{Krein I''}%
\end{equation}
Setting $x=a$ in the above equation we obtain%
\[
\psi_{V}(a)=\mathcal{R}(\lambda,a,y)+\sigma\psi_{V}(a)\mathcal{R}%
(\lambda,a,a)
\]
or%
\[
\psi_{V}(a)(1-\sigma\mathcal{R}(\lambda,a,a))=\mathcal{R}(\lambda,a,y).
\]
Since $\psi_{V}(a)=\mathcal{R}_{V}(\lambda,a,y)$ we obtain equation
(\ref{Krein I'}). In turn, equations (\ref{Krein I'}) and (\ref{Krein I''})
imply (\ref{Krein}) and (\ref{Krein I}).
\end{proof}

\begin{theorem}
\label{Spectrum I} The operator $H=L$\ $-\sigma\delta_{a}$ has a pure point
spectrum which consists of at most one negative eigenvalue and countably many
positive eigenvalues with accumulating point $0$. $\ $

The operator $H$ has precisely one negative eigenvalue $\lambda_{-}^{\sigma}$
if and only if $\sigma>0$ and one of the following two conditions holds: $(i)$
the semigroup $(e^{-tL})_{t>0}$ is recurrent, $(ii)$ the semigroup
$(e^{-tL})_{t>0}$ is transient and $\mathcal{R}(0,a,a)>1/\sigma$. If it is the
case, then $Spec(H)$ consists of numbers%
\[
\lambda_{-}^{\sigma}<0<...<\lambda_{k+1}<\lambda_{k}^{\sigma}<\lambda
_{k}<...<\lambda_{2}<\lambda_{1}^{\sigma}<\lambda_{1}.
\]
Otherwise $Spec(H)$ consists of numbers
\[
0<...<\lambda_{k+1}<\lambda_{k}^{\sigma}<\lambda_{k}<...<\lambda_{2}%
<\lambda_{1}^{\sigma}<\lambda_{1}.
\]
If $\sigma<0$, then $Spec(H)$ consists of numbers
\[
0<...<\lambda_{k+1}<\lambda_{k}^{\sigma}<\lambda_{k}<...<\lambda_{2}%
<\lambda_{1}^{\sigma}<\lambda_{1}<\lambda_{+}^{\sigma}.
\]
The eigenvalues $\lambda_{k}$ are at the same time eigenvalues of the operator
$L$. All $\lambda_{k}$ have infinite multiplicity and compactly supported
eigenfunctions, the eigenfunctions of the operator $L$, whose supports do not
contain $a$.

The eigenvalue $\lambda_{k}^{\sigma}$ (resp. $\lambda_{-}^{\sigma}$,
$\lambda_{+}^{\sigma}$) is the unique solution of the equation%
\[
\mathcal{R}(\lambda,a,a)=1/\sigma
\]
in the interval $\left]  \lambda_{k+1},\lambda_{k}\right[  $ (resp.
$]-\infty,0[$, $]\lambda_{1},+\infty\lbrack$). Each $\lambda_{k}^{\sigma}$
(resp. $\lambda_{-}^{\sigma}$, $\lambda_{+}^{\sigma}$) has multiplicity one
and non-compactly supported eigenfunction $\psi_{k}(x)=\mathcal{R}(\lambda
_{k}^{\sigma},x,a)$ (resp. $\psi_{-}(x)=\mathcal{R}(\lambda_{-}^{\sigma}%
,x,a)$, $\psi_{+}(x)=\mathcal{R}(\lambda_{+}^{\sigma},x,a)$).
\end{theorem}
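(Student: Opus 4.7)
The plan is to reduce the full eigenvalue problem for $H = L - \sigma\delta_a$ to scalar analysis of the diagonal Green function $F(\lambda) := \mathcal{R}(\lambda, a, a)$, using the Krein-type formula from Theorem \ref{Krein identity I}. The spectrum of $H$ will split into two disjoint families: (a) eigenvalues inherited from $L$, realized by $L$-eigenfunctions whose supports avoid $a$; and (b) ``new'' eigenvalues created by the rank-one perturbation, which are exactly the solutions of $\sigma F(\lambda) = 1$. For family (a), each eigenvalue $\lambda_k$ of $L$ carries infinite multiplicity over to $H$: for the associated rank $r$, pick any ball $B'$ of rank $r$ disjoint from $I_{r}(a)$ together with a sub-ball $B \subset B'$; then $f_B(a)=0$, so $Vf_B = 0$ and $Hf_B = Lf_B = \lambda_k f_B$. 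There are infinitely many such $B'$.

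For family (b), I would spectrally decompose
\begin{equation*}
F(\lambda) = \sum_k \frac{\mu_k}{\lambda_k - \lambda}, \qquad \mu_k := \|P_{\lambda_k}\delta_a\|^2,
\end{equation*}
and verify $\mu_k > 0$ for every $\lambda_k \in \mathrm{Spec}(L)$ by exhibiting, for each rank $r$, an eigenfunction $f_B$ with $B \subset I_r(a)$ for which $f_B(a) \ne 0$. The function $F$ is then real-analytic and strictly increasing on each connected component of $\mathbb{R}\setminus\mathrm{Spec}(L)$: on each gap $(\lambda_{k+1}, \lambda_k)$ it sweeps from $-\infty$ to $+\infty$, so $\sigma F(\lambda) = 1$ has a unique root $\lambda_k^\sigma$; on $(\lambda_1, +\infty)$ it runs from $-\infty$ to $0^-$, producing a unique $\lambda_+^\sigma$ precisely when $\sigma < 0$; on $(-\infty, 0)$ it runs from $0^+$ to $F(0^-) = \mathcal{R}(0,a,a)$, which equals $+\infty$ in the recurrent case and is finite otherwise, yielding $\lambda_-^\sigma$ exactly under the stated conditions on $\sigma$.

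To see each $\lambda^\sigma$ is an eigenvalue, define $\psi(x) := \mathcal{R}(\lambda^\sigma, x, a) = (L - \lambda^\sigma I)^{-1}\delta_a$; this lies in $L^2(X,m)$ because $\lambda^\sigma$ is in the $L$-resolvent set and $\delta_a \in L^2(X,m)$ in the discrete setting. A direct calculation gives
\begin{equation*}
(H - \lambda^\sigma I)\psi = \delta_a - \sigma \psi(a)\delta_a = \bigl(1 - \sigma F(\lambda^\sigma)\bigr)\delta_a = 0.
\end{equation*}
Simplicity follows from a short argument: any second independent eigenfunction, combined linearly with $\psi$, would yield an eigenfunction vanishing at $a$, on which $V$ acts trivially, forcing $\lambda^\sigma \in \mathrm{Spec}(L)$, contradiction. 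The same reasoning shows non-compact support: if $\psi$ were compactly supported then either $\psi(a)=0$, contradicting $\lambda^\sigma \notin \mathrm{Spec}(L)$, or $\psi$ is a scalar multiple of $\mathcal{R}(\lambda^\sigma, \cdot, a)$, which fails to be compactly supported on a homogeneous ultrametric space.

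The main obstacle is \emph{completeness}: verifying that families (a) and (b) exhaust $\mathrm{Spec}(H)$. I would handle this by the orthogonal decomposition $L^2(X,m) = \mathcal{H}_{\mathrm{cyc}} \oplus \mathcal{H}_{\mathrm{cyc}}^{\perp}$, where $\mathcal{H}_{\mathrm{cyc}}$ is the cyclic subspace generated by $\delta_a$ under $L$. On $\mathcal{H}_{\mathrm{cyc}}^{\perp}$ every vector is $L^2$-orthogonal to $\delta_a$, so $V$ acts as zero and $H = L$, accounting for family (a) with infinite multiplicity. On $\mathcal{H}_{\mathrm{cyc}}$, $L$ is unitarily equivalent to multiplication by $x$ on $L^2(\mathbb{R},\mu_0)$ with $\mu_0 = \sum \mu_k \delta_{\lambda_k}$, and $H$ becomes the rank-one perturbation covered by the Simon--Wolff framework recalled above Theorem \ref{SiWo}; the spectrum on this subspace is exactly the union of poles of $F$ surviving the perturbation and of the new zeros of $1 - \sigma F$, which is precisely family (b). The interlacing $\lambda_{k+1} < \lambda_k^\sigma < \lambda_k$, combined with $\lambda_k \searrow 0$, forces $\lambda_k^\sigma \searrow 0$ so that $0$ is the unique accumulation point of $\mathrm{Spec}(H)$.
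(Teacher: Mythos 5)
Your proposal is correct and its constructive core coincides with the paper's proof: the paper likewise expands $\delta_a=\sum_k f_{B_k}$ along the geodesic $\{a\}=B_0\subsetneq B_1\subsetneq\cdots$, obtains the partial-fraction formula $\mathcal{R}(\lambda,a,a)=\sum_k A_k/(\lambda_k-\lambda)$ with $A_k>0$ (your $\mu_k$, computed explicitly rather than as $\|P_{\lambda_k}\delta_a\|^2$), locates the roots of $\sigma\mathcal{R}(\lambda,a,a)=1$ by monotonicity on each gap and on $]-\infty,0[$, and exhibits the eigenfunctions $\mathcal{R}(\lambda^\sigma,\cdot,a)$ and the $L$-eigenfunctions $f_B$ with $a\notin B$. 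Where you genuinely go beyond the paper is the exhaustiveness step: the paper's proof stops after verifying that the listed numbers are eigenvalues and never argues that they constitute all of $Spec(H)$ or that the spectrum is pure point, whereas you supply this via the decomposition $L^2=\mathcal{H}_{\mathrm{cyc}}\oplus\mathcal{H}_{\mathrm{cyc}}^{\perp}$ (on the complement $H=L$; on the cyclic part $H$ is a genuine rank-one perturbation with cyclic vector). One point to tighten there: Theorem \ref{SiWo} as stated only yields pure point spectrum for \emph{a.e.} $\sigma$, while the theorem claims it for every $\sigma\neq 0$. Since $\mu_0$ is purely atomic with support $\{0\}\cup\{\lambda_k\}$, the cleanest fix is to note that $Spec_{ess}(H)=Spec_{ess}(L)$ is countable by Weyl's theorem (or that $F(x)<\infty$ off this countable set, so by Aronszajn--Donoghue any continuous component of the perturbed spectral measure would have to live on a countable set, which is impossible); this upgrades the conclusion to all $\sigma$ and makes your description of the spectrum on $\mathcal{H}_{\mathrm{cyc}}$ precise — note also that, restricted to $\mathcal{H}_{\mathrm{cyc}}$, none of the poles $\lambda_k$ ``survive'': they reappear in $Spec(H)$ only through $\mathcal{H}_{\mathrm{cyc}}^{\perp}$.
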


\textbf{%
\begin{figure}
[tbh]
\begin{center}
\includegraphics[
height=2.6307in,
width=5.1831in
]%
{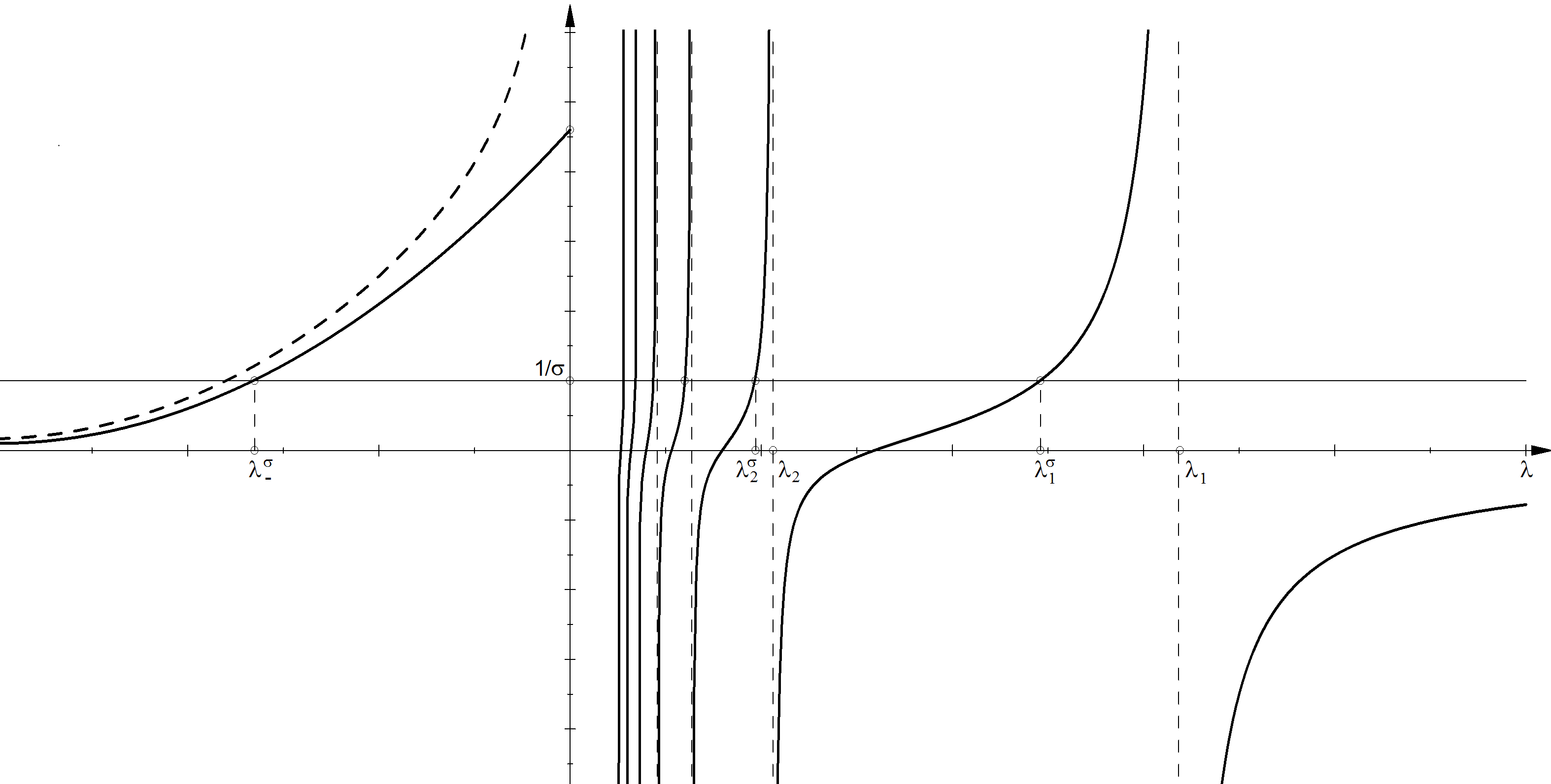}%
\caption{The roots $\left\{  \lambda_{\ast}^{\sigma}\right\}  $ of the
equation $\mathcal{R}\left(  \lambda,a,a\right)  =1/\sigma.$ The dashed graph
corresponds to a recurrent case, the solid graph -- to the transient case.}%
\label{pic2}%
\end{center}
\end{figure}
}

\begin{proof}
Let $\Upsilon(X)$ be the tree of balls associated with the ultrametric space
$(X,d)$.\ Consider in $\Upsilon(X)$ the infinite geodesic path from $a$ to
$\varpi:$\ $\{a\}=B_{0}\varsubsetneq B_{1}\varsubsetneq...\varsubsetneq
B_{k}\varsubsetneq...$ . The series below converges uniformly and in $L^{2},$%
\begin{equation}
\delta_{a}=\left(  \frac{1_{B_{0}}}{m(B_{0})}-\frac{1_{B_{1}}}{m(B_{1}%
)}\right)  +\left(  \frac{1_{B_{1}}}{m(B_{1})}-\frac{1_{B_{2}}}{m(B_{2}%
)}\right)  +...=\sum_{k=0}^{\infty}f_{B_{k}}. \label{delta_a decomposition}%
\end{equation}
Notice that all $f_{B_{k}}$ are eigenfunctions of the operator $L$, i.e.
$Lf_{B_{k}}=\lambda(B_{k+1})f_{B_{k}}=\lambda_{k+1}f_{B_{k}}.$ By definition
$\mathcal{R}(\lambda,x,y)=(L-\lambda)^{-1}\delta_{y}(x)$ whence we obtain
\begin{align*}
\mathcal{R}(\lambda,a,a)  &  =\frac{1}{\lambda_{1}-\lambda}f_{B_{0}}%
(a)+\frac{1}{\lambda_{2}-\lambda}f_{B_{1}}(a)+...\\
&  =\frac{1}{\lambda_{1}-\lambda}\left(  \frac{1}{m(B_{0})}-\frac{1}{m(B_{1}%
)}\right) \\
&  +\frac{1}{\lambda_{2}-\lambda}\left(  \frac{1}{m(B_{1})}-\frac{1}{m(B_{2}%
)}\right)  +...\text{ ,}%
\end{align*}
or in the final form%
\begin{equation}
\mathcal{R}(\lambda,a,a)=%
{\displaystyle\sum\limits_{k=1}^{\infty}}
\frac{A_{k}}{\lambda_{k}-\lambda},\text{ \ }A_{k}=\left(  \frac{1}{m(B_{k-1}%
)}-\frac{1}{m(B_{k})}\right)  . \label{Resolvent poles}%
\end{equation}
Since $\lambda\rightarrow\mathcal{R}(\lambda,a,a)$ is an increasing function,
the equation%
\begin{equation}
1-\sigma\mathcal{R}(\lambda,a,a)=0,\text{ \ }\sigma\neq0,
\label{sigma-equation}%
\end{equation}
has precisely one solution $\lambda_{k}^{\sigma}$ lying in each open interval
$\left]  \lambda_{k+1},\lambda_{k}\right[  $ ,%
\[
\lambda_{k+1}<\lambda_{k}^{\sigma}<\lambda_{k}\text{, \ }k=1,2,...\text{ .}%
\]
\textbf{Claim 1} All numbers $\lambda_{k}^{\sigma}$ are eigenvalues of the
operator $H$. Indeed, the function $\psi(x)=\mathcal{R}(\lambda,x,a)$ with
$\lambda=$ $\lambda_{k}^{\sigma}$ satisfies the equation%
\begin{align*}
H\psi(x)-\lambda\psi(x)  &  =L\psi(x)-\lambda\psi(x)-\sigma\delta_{a}%
(x)\psi(x)\\
&  =L\psi(x)-\lambda\psi(x)-\sigma\delta_{a}(x)\psi(a)\\
&  =L\psi(x)-\lambda\psi(x)-\delta_{a}(x)=0.
\end{align*}
\textbf{Claim 2 }All numbers $\lambda_{k}$ are eigenvalues of the operator
$H$. Indeed, for any ball $B$ which does not contain $a$ but belongs to the
horocycle $\mathcal{H}_{k-1}$ we have
\[
Hf_{B}=Lf_{B}=\lambda_{k}f_{B}.
\]
When $\sigma>0$ there may exist one more eigenvalue $\lambda_{-}^{\sigma}<0,$
a solution of the equation (\ref{sigma-equation}). Indeed, $\lambda
\rightarrow\mathcal{R}(\lambda,a,a)$ is an increasing function, continuous on
the interval $]-\infty,0]$. Since $\mathcal{R}(\lambda,a,a)\rightarrow0$ as
$\lambda\rightarrow-\infty$ and $\mathcal{R}(\lambda,a,a)\rightarrow
\mathcal{R}(0,a,a)\leq+\infty$ as $\lambda\rightarrow-0$, equation
(\ref{sigma-equation}) has unique solution $\lambda=\lambda_{-}^{\sigma}<0$ in
the cases $(i)$ and $(ii).$The proof of the theorem is finished.
\end{proof}

\begin{example}
\label{Example "One-point pert."}The Dyson's Laplacian. Consider the set
$X\mathcal{=}\{0,1,2,...\}$ equipped with the counting measure $m$ and with
the set of partitions $\{\Pi_{r}:$ $r=0,1,...\}$ each of which consists of all
rank $r$ intervals $I_{r}=\{x\in\mathcal{X}:$ $kp^{r}\leq x<(k+1)p^{r}\}$. The
set of partitions $\{\Pi_{r}\}$ generates the ultrametric structure on $X$ and
the hierarchical Laplacian
\[
\mathrm{D}^{\alpha}f(x)=%
{\displaystyle\sum\limits_{r=1}^{+\infty}}
(1-\kappa)\kappa^{r-1}\left(  f(x)-\frac{1}{m(I_{r}(x))}%
{\displaystyle\int\limits_{I_{r}(x)}}
fdm\right)  ,\text{ }\kappa=p^{-\alpha},
\]
where the sum is taken over all rank $r$ intervals $I_{r}(x)$ which contain
$x$.

The operator $\mathrm{D}^{\alpha}$ admits a complete system of compactly
supported eigenfunctions. Indeed, let $I$ be an interval of rank $r$, and
$I_{1},I_{2},...,I_{p}$ be its subintervals of rank $r-1$. Let us consider $p$
functions%
\[
f_{I_{i}}=\frac{1_{I_{i}}}{m(I_{i})}-\frac{1_{I}}{m(I)},\text{ }i=1,2,...,p.
\]
Each function $f_{I_{i}}$\ belongs to the domain of the operator
$\mathrm{D}^{\alpha}$\ and%
\[
\mathrm{D}^{\alpha}f_{I_{i}}=\kappa^{r-1}f_{I_{i}}\text{. \ }%
\]
When $I$ runs over the set all $p$-adic intervals the set of eigenfunctions
$f_{I_{i}}$ forms a complete system in $L^{2}(X,m)$. In particular,
$\mathrm{D}^{\alpha}$ is essentially self-adjoint operator having pure point
spectrum
\[
Spec(\mathrm{D}^{\alpha})=\{0\}\cup\{\kappa^{r-1}:r\in\mathbb{N}\}.
\]
Clearly each eigenvalue $\lambda(I)=\kappa^{r-1}$ has infinite multiplicity.
Let us compute the value $\mathcal{R}(\lambda):=\mathcal{R}(\lambda,0,0)$ of
the resolvent kernel for $\mathrm{D}^{\alpha}$. By equation
(\ref{Resolvent poles}), we have
\[
\mathcal{R}(\lambda)=%
{\displaystyle\sum\limits_{k\geq1}}
\frac{A_{k}}{\lambda_{k}-\lambda}=(p-1)%
{\displaystyle\sum\limits_{k\geq1}}
\frac{1}{p^{k}(\lambda_{k}-\lambda)}.
\]
In particular, $\mathcal{R}(0)=+\infty$ if and only if $\alpha\geq1$,
otherwise
\[
\mathcal{R}(0)=\frac{p-1}{p}%
{\displaystyle\sum\limits_{k\geq0}}
\frac{1}{p^{k(1-\alpha)}}=\frac{p-1}{p-p^{\alpha}}.
\]
Consider the operator $H=\mathrm{D}^{\alpha}-\sigma\delta_{0}$, $\sigma>0$.
Let us compute the number $Neg(H)$ of negative eigenvalues of the operator $H$
counted with their multiplicity. By Theorem \ref{Spectrum I}, the operator $H$
has at most one negative eigenvalue. It has exactly one negative eigenvalue if
and only if either $\alpha\geq1$ or $0<\alpha<1$ and $\sigma>(p-p^{\alpha
})(p-1)^{-1}$. If we denote the set of pairs $(\alpha,\sigma)$ which satisfy
the above conditions by $\Omega$ and by $\Omega_{0}=\mathbb{R}_{+}%
^{2}\setminus\Omega$ its complement, we obtain%
\[
Neg(H)=\left\{
\begin{array}
[c]{cc}%
1 & \text{if }(\alpha,\sigma)\in\Omega\\
0 & \text{if }(\alpha,\sigma)\in\Omega_{0}%
\end{array}
\right.
\]
which is shown on the picture below.%
\begin{figure}
[tbh]
\begin{center}
\includegraphics[
height=3.054in,
width=5.4121in
]%
{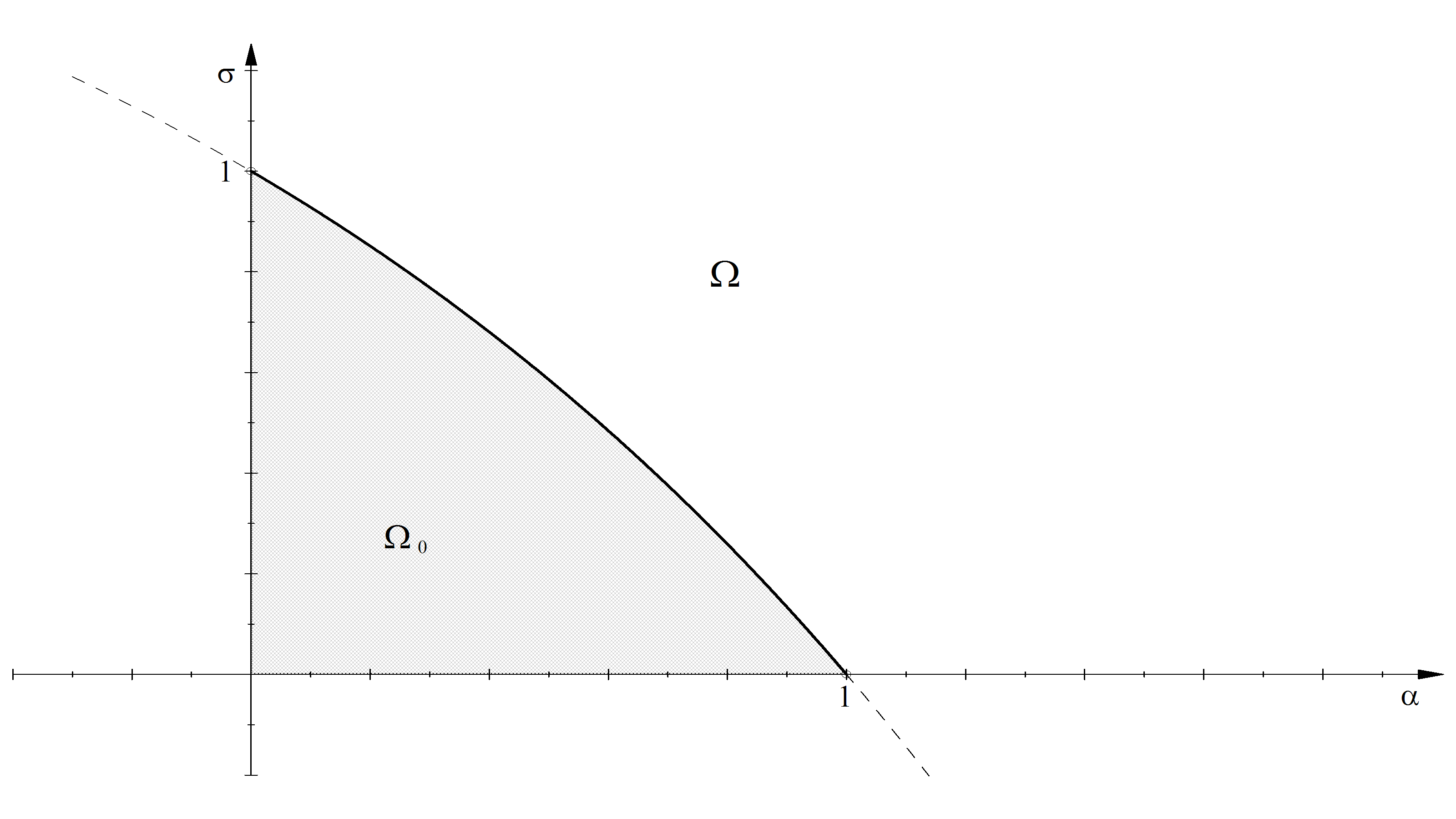}%
\caption{Sets $\Omega_{0}$ and $\Omega$}%
\label{pic3}%
\end{center}
\end{figure}

\end{example}

\section{Finite rank perturbations}

As in the previous section the ultrametric measure space $(X,d,m)$ is
\emph{countably infinite and homogeneous. For convenience, we assume that}
$m(B)=\mathrm{diam}(B)$ \emph{for any non-singleton ball} $B.$

Let $L$ be a homogeneous hierarchical Laplacian. We study spectral properties
of the Schr\"{o}dinger type operator $H=L+V$ with potential $V(x)=-\sum
_{i=1}^{N}\sigma_{i}\delta_{a_{i}}(x)$, $\sigma_{i}>0$. Clearly $H$ can be
written in the form%
\[
Hf(x)=Lf(x)-\sum_{i=1}^{N}\sigma_{i}(f,\delta_{a_{i}})\delta_{a_{i}}(x),
\]
that is, $H$ can be regarded as rank $N$ perturbation of the operator $L$.
Throughout this section we use the following notation

\begin{itemize}
\item $\mathcal{R}(\lambda,x,y)$ is the solution of the equation
$L\psi(x)-\lambda\psi(x)=\delta_{y}(x)$. We set $\mathcal{R}(\lambda
,x,\overrightarrow{a}):=(\mathcal{R}(\lambda,x,a_{i}))_{i=1}^{N}$, and
$\mathcal{R}(\lambda,\overrightarrow{a},\overrightarrow{a}):=(\mathcal{R}%
(\lambda,a_{j},a_{i}))_{i,j=1}^{N}$.

\item $\mathcal{R}_{V}(\lambda,x,y)$ is the solution of the equation
$H\psi(x)-\lambda\psi(x)=\delta_{y}(x)$. We set $\mathcal{R}_{V}%
(\lambda,x,\overrightarrow{a}):=(\mathcal{R}_{V}(\lambda,x,a_{i}))_{i=1}^{N}$,
and $\mathcal{R}_{V}(\lambda,\overrightarrow{a},\overrightarrow{a}%
):=(\mathcal{R}_{V}(\lambda,a_{j},a_{i}))_{i,j=1}^{N}.$

\item $\Sigma:=\mathrm{diag}(\sigma_{i}:i=1,...,N).$
\end{itemize}

\begin{theorem}
\label{pure point spectrum}The following properties hold true:

\begin{description}
\item[1.] The set $Spec(H)$ is pure point, its essential part $Spec_{ess}(H)$
coincides with the set $Spec(L)=\{0\}\cup\{\lambda_{k}\}$, its discrete part
$Spec_{d}(H)$ in each open interval lying in the complement of $Spec(L)$
consists of at most $N$ distinct points, solutions of the equation
\begin{equation}
\det(\Sigma^{-1}-\mathcal{R}(\lambda,\overrightarrow{a},\overrightarrow
{a}))=0. \label{discrete spectrum}%
\end{equation}

\item[2.] For each $k\in\mathbb{N}$ there exists $\delta>1$ such that
$\min_{i\neq j}d(a_{i},a_{j})>\delta$ implies that the operator $H$ has
precisely $N$ distinct eigenvalues in each open interval $(\lambda
_{s+1},\lambda_{s})$: $1\leq s\leq k$. Moreover, there exists precisely $N$
distinct negative eigenvalues of the operator $H$ provided one of the
following two conditions is satisfied:

$(2.1)$ The semigroup $(e^{-tL})_{t>0}$ is recurrent.

$(2.2)$ The semigroup $(e^{-tL})_{t>0}$ is transient and all $1/\sigma
_{i}<\mathcal{R}(0,a,a)$. \footnote{Thanks to the homogenuity assumption
$\mathcal{R}(\lambda,a,a)$ does not depend on $a$}
\end{description}
\end{theorem}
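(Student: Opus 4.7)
The plan is to extend the rank-one analysis of Theorem \ref{Krein identity I} to a rank-$N$ Krein-type resolvent formula, from which the full spectral picture can be read off. Writing $V=-P\Sigma P^{\ast}$ with $P\colon\mathbb{C}^{N}\to L^{2}(X,m)$ defined by $Pc=\sum_{i=1}^{N}c_{i}\delta_{a_{i}}$ and iterating the computation behind (\ref{Krein}) componentwise, one obtains
\[
\mathcal{R}_{V}(\lambda,x,y)=\mathcal{R}(\lambda,x,y)+\mathcal{R}(\lambda,x,\overrightarrow{a})\bigl(\Sigma^{-1}-\mathcal{R}(\lambda,\overrightarrow{a},\overrightarrow{a})\bigr)^{-1}\mathcal{R}(\lambda,\overrightarrow{a},y),
\]
valid whenever the displayed $N\times N$ matrix is invertible. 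Since $V$ has compact support, Theorem \ref{Schroedinger spectrum}(iii) immediately yields $Spec_{ess}(H)=Spec(L)=\{0\}\cup\{\lambda_{k}\}$ and shows that $Spec(H)$ is pure point. Each $\lambda_{s}\in Spec(L)$ remains of infinite multiplicity, because for any ball $B$ of rank $s-1$ whose closure avoids $\{a_{1},\ldots,a_{N}\}$ the eigenfunction $f_{B}$ from (\ref{eigenfunction}) satisfies $Vf_{B}=0$ and hence $Hf_{B}=\lambda_{s}f_{B}$. Any eigenvalue of $H$ outside $Spec(L)$ must arise as a zero of the characteristic determinant (\ref{discrete spectrum}). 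For the bound ``at most $N$ zeros per interval'' I would observe that the matrix $\tfrac{d}{d\lambda}\mathcal{R}(\lambda,\overrightarrow{a},\overrightarrow{a})$ equals $\bigl(\bigl\langle(L-\lambda)^{-1}\delta_{a_{i}},(L-\lambda)^{-1}\delta_{a_{j}}\bigr\rangle\bigr)_{i,j=1}^{N}$, which is the Gram matrix of $N$ linearly independent vectors and hence strictly positive definite. By Hellmann--Feynman each analytic eigenvalue branch $\mu_{j}(\lambda)$ of the real symmetric matrix $M(\lambda):=\Sigma^{-1}-\mathcal{R}(\lambda,\overrightarrow{a},\overrightarrow{a})$ is strictly decreasing on every interval $I_{s}:=(\lambda_{s+1},\lambda_{s})$, so $\det M$ has at most $N$ zeros on $I_{s}$. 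This finishes part~1.

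For part~2 the hierarchical structure supplies the needed perturbation input. Expanding $\delta_{a_{i}}$ as in (\ref{delta_a decomposition}) and using (\ref{Resolvent poles}), the off-diagonal entry $\mathcal{R}(\lambda,a_{i},a_{j})$ with $i\neq j$ is represented by a series whose terms come only from ranks $k\geq r_{ij}$, where $r_{ij}$ is the rank of the minimal ball containing both $a_{i}$ and $a_{j}$; consequently $\mathcal{R}(\lambda,a_{i},a_{j})\to 0$ uniformly on compact subsets of any fixed $I_{s}$ as $d(a_{i},a_{j})\to\infty$. By homogeneity all diagonal entries coincide with the scalar function $\mathcal{R}(\lambda,a,a)$, so $M(\lambda)$ becomes an arbitrarily small symmetric perturbation of $\mathrm{diag}\bigl(1/\sigma_{i}-\mathcal{R}(\lambda,a,a)\bigr)$. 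In the diagonal (unperturbed) case each factor $1/\sigma_{i}-\mathcal{R}(\lambda,a,a)=0$ is precisely the scalar equation of Theorem \ref{Spectrum I} and has exactly one simple zero in $I_{s}$, providing $N$ distinct simple zeros of $\det M$. Simplicity together with the strict monotonicity of $\mu_{j}$ lets the implicit function theorem persist these $N$ zeros under small symmetric perturbations of $M$, whenever $\min_{i\neq j}d(a_{i},a_{j})>\delta$ for a sufficiently large $\delta=\delta(k)$. The identical argument on the half-line $(-\infty,0)$, where $\mathcal{R}(\lambda,a,a)$ is analytic and increases from $0$ to $\mathcal{R}(0^{-},a,a)$ (which is $+\infty$ under $(2.1)$ and strictly exceeds $1/\sigma_{i}$ for every $i$ under $(2.2)$), produces $N$ distinct negative eigenvalues.

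The main obstacle I anticipate is the $k$-dependence of the threshold $\delta$. The intervals $I_{s}$ shrink as $s\to\infty$ (the eigenvalues $\lambda_{s}$ accumulate at $0$) and the gaps between the unperturbed roots $\lambda_{s}^{\sigma_{1}},\ldots,\lambda_{s}^{\sigma_{N}}$ inside $I_{s}$ shrink accordingly, so no single $\delta$ can simultaneously resolve the $N$ roots in every interval; this is exactly why the statement restricts the conclusion to $1\leq s\leq k$. Making the perturbation argument quantitative requires only uniform lower bounds on $|\mu_{j}'(\lambda)|$ and uniform upper bounds on the off-diagonal norm on the \emph{finitely many} intervals $I_{1},\ldots,I_{k}$, which is tractable.
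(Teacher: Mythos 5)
Your proposal is correct and arrives at the same spectral picture, but the mechanism behind the bound ``at most $N$ eigenvalues per gap'' is genuinely different from the paper's. The paper isolates this as an abstract operator-theoretic statement (Lemma \ref{Weyl perturbation}): a rank-$N$ symmetric perturbation creates at most $N$ eigenvalues in any interval free of $Spec(A)$, proved by induction on the rank, with the base case resting on the scalar monotonicity $\frac{d}{d\lambda}(R_{\lambda}f_{1},f_{1})=\Vert R_{\lambda}f_{1}\Vert^{2}>0$ and the induction step on a case analysis (whether $f_{2}\perp f_{\ast}$). You instead work directly with the $N\times N$ matrix $M(\lambda)=\Sigma^{-1}-\mathcal{R}(\lambda,\overrightarrow{a},\overrightarrow{a})$ and note that $-M'(\lambda)$ is the Gram matrix of the linearly independent vectors $(L-\lambda)^{-1}\delta_{a_{i}}$, hence positive definite, so each analytic eigenvalue branch of $M$ is strictly decreasing and contributes at most one zero to $\det M$. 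This is the matrix incarnation of the same monotonicity and is arguably cleaner; its price is that you must first identify $Spec_{d}(H)$ in the gap with the zero set of (\ref{discrete spectrum}), which you assert and the paper derives explicitly from the eigenfunction system (\ref{System Eq'}). For the ``exactly $N$'' part your decoupling-plus-implicit-function-theorem argument is the paper's Gershgorin argument in different clothing: both hinge on the estimate (\ref{R-lambda-ineq}) making the off-diagonal entries uniformly small on the first $k$ gaps once $\min_{i\neq j}d(a_{i},a_{j})>\delta$, and both tacitly need the $\sigma_{i}$ pairwise distinct so that the unperturbed roots (equivalently, the Gershgorin disks) separate. Two small points to tighten: the existence of a sign change for each branch should be run on a compact subinterval of $I_{s}$ chosen so that $1/\sigma_{i}-\mathcal{R}(\lambda)$ changes sign there (your compact-subset uniformity then suffices, combined with monotonicity on all of $I_{s}$ for uniqueness); and on $(-\infty,0)$ the inequality (\ref{R-lambda-ineq}) is not available as stated, since there $\lambda<\lambda(a_{i}\curlywedge a_{j})$, so the decay of the off-diagonal entries must be rechecked directly from (\ref{R outside diag}) on a compact subinterval bounded away from $0$ --- routine, and the paper is equally terse on this point.
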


The proof of the first part of Theorem \ref{pure point spectrum} is based on
the Weyl's theorem on the essential spectrum of compactly perturbed symmetric
operators, see \cite[Theorem IV.5.35]{Kato}, and on the following lemma.

\begin{lemma}
\label{Weyl perturbation}Let $A$ and $B$ be two symmetric bounded operators
and $H=A+B$. Assume that $B$ is of rank $N$ operator. Let $(a,b)$ be an
interval lying in the complement of the set $Spec(A)$. Then the set
$Spec(H)\cap(a,b)$ consists of at most $N$ distinct points.
\end{lemma}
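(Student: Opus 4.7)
The plan is to first reduce the claim to a dimension-counting statement via Weyl's theorem on the essential spectrum, and then bound the total spectral multiplicity of $H$ inside $(a,b)$ by $N$. Since $B$ is of finite rank it is compact, so by the Weyl invariance of the essential spectrum under compact perturbations \cite[Theorem IV.5.35]{Kato} (the same result already invoked in the paper), $Spec_{ess}(H) = Spec_{ess}(A)$. Combined with the hypothesis $(a,b)\cap Spec(A)=\emptyset$ this yields $(a,b)\cap Spec_{ess}(H)=\emptyset$, so every point of $Spec(H)\cap (a,b)$ is an isolated eigenvalue of finite multiplicity and the spectral projection $E_{H}((a,b))$ has finite rank.

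I would next establish the sharper statement $\dim E_{H}((a,b))\le N$, which implies the lemma because the number of distinct eigenvalues cannot exceed the total multiplicity. Diagonalise the symmetric rank-$N$ operator as $B=\sum_{i=1}^{N}\sigma_i(e_i,\,\cdot\,)e_i$ with $\sigma_i\neq 0$ and $\{e_i\}$ orthonormal, and construct the telescoping chain of bounded symmetric operators $A=A_0,A_1,\ldots ,A_N=H$ by setting $A_k:=A_{k-1}+\sigma_k(e_k,\,\cdot\,)e_k$. Each $A_{k+1}$ is a rank-one symmetric perturbation of $A_k$, and Weyl's theorem applied successively guarantees $(a,b)\cap Spec_{ess}(A_k)=\emptyset$ at every stage, so every $E_{A_k}((a,b))$ is finite dimensional.

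The decisive ingredient is the classical rank-one interlacing estimate in a gap of the essential spectrum:
\[
\bigl|\dim E_{A_{k+1}}((a,b))-\dim E_{A_k}((a,b))\bigr|\le 1,
\]
which follows from the variational (min--max) characterisation applied to the finitely many eigenvalues of $A_k$ and $A_{k+1}$ lying in any compact subinterval of $(a,b)$. Telescoping this bound across $k=0,1,\ldots ,N-1$ gives
\[
\dim E_{H}((a,b))\le \dim E_{A}((a,b))+N=0+N=N,
\]
which proves the lemma. The main technical point is the rank-one interlacing estimate in the presence of an essential spectrum outside $(a,b)$: while standard, it requires the min--max principle to be formulated locally for the spectral gap rather than globally for the whole operator, using that on each compact subinterval of $(a,b)$ the two operators $A_k,A_{k+1}$ have only finitely many eigenvalues which interlace just as in the matrix Cauchy interlacing theorem.
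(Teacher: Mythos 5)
Your strategy is correct in substance but genuinely different from the paper's. After the same appeal to Weyl's essential spectrum theorem, the paper treats the rank-one case by writing the eigenvalue equation as $\sigma_{1}(R_{\lambda}f_{1},f_{1})+1=0$ and noting that $\phi(\lambda)=(R_{\lambda}f_{1},f_{1})$ is strictly increasing on $(a,b)$ because $\phi'(\lambda)=\Vert R_{\lambda}f_{1}\Vert^{2}>0$, so the equation has at most one root; the inductive step passes through the intermediate operator $A'=A+\sigma_{1}(f,f_{1})f_{1}$ and splits into the cases $f_{2}\perp f_{\ast}$ and $f_{2}\not\perp f_{\ast}$, using a Krein-type resolvent identity to show that in the second case the intermediate eigenvalue $\lambda_{\ast}$ is expelled from $Spec(H)$ and merely splits $(a,b)$ into two subintervals, each carrying at most one eigenvalue. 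Your projection-counting route is arguably cleaner and proves more: it bounds the total multiplicity $\dim E_{H}((a,b))$ by $N$, not only the number of distinct eigenvalues, and it avoids the paper's case analysis.

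The step you should not leave to ``standard min--max'' is the interlacing inequality inside a gap of the essential spectrum. Courant--Fischer characterises eigenvalues below (or above) the essential spectrum; its ``local'' version in a gap is not an off-the-shelf tool, and the obvious reduction via $(H-c)^{2}$ turns a rank-one perturbation into a rank-two one and would only yield $2N$. The inequality is nevertheless true and admits a short direct proof that I recommend substituting. Put $c=(a+b)/2$ and $r=(b-a)/2$. If $\dim E_{A_{k+1}}((a,b))\geq\dim E_{A_{k}}((a,b))+2$, then the subspace $\mathrm{Ran}\,E_{A_{k+1}}((a,b))\cap\ker\bigl(\sigma_{k+1}(e_{k+1},\cdot)e_{k+1}\bigr)\cap\bigl(\mathrm{Ran}\,E_{A_{k}}((a,b))\bigr)^{\perp}$ contains a unit vector $f$; then $\Vert(A_{k+1}-c)f\Vert<r$ because $f$ lies in the range of the spectral projection of $A_{k+1}$ onto the open interval $(a,b)$, while $(A_{k+1}-c)f=(A_{k}-c)f$ and $\Vert(A_{k}-c)f\Vert\geq r$ because $f$ is orthogonal to $\mathrm{Ran}\,E_{A_{k}}((a,b))$ --- a contradiction. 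In fact the same codimension count applied in one stroke to $\ker B$, which has codimension at most $N$, gives $\dim E_{H}((a,b))\leq N$ immediately, with no induction and, strictly speaking, no need for Weyl's theorem at all: finite rank of $E_{H}((a,b))$ already forces $Spec(H)\cap(a,b)$ to be a set of at most $N$ eigenvalues.
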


\begin{proof}
By the Weyl's essential spectrum theorem $Spec_{ess}(H)$ coincides with the
set $Spec_{ess}(L)=\{0\}\cup\{\lambda_{k}\}$. Hence the set $Spec(H)\cap(a,b)$
may contain only finite number of eigenvalues each of which has finite
multiplicity. Consider the case $N=1$, that is, the operator $B$ is of the
form%
\[
Bf=\sigma_{1}(f,f_{1})f_{1}.
\]
Let $\lambda\in(a,b)$ and let $f$ be a non-trivial solution of the equation
$Hf-\lambda f=0$. Then $f$ can be written in the form
\begin{equation}
f=-\sigma_{1}(f,f_{1})R_{\lambda}f_{1} \label{sigma-eigenfunction}%
\end{equation}
where $R_{\lambda}=(A-\lambda)^{-1}$ is the resolvent operator. It follows
that $(f,f_{1})\neq0$ and
\[
(f,f_{1})=-\sigma_{1}(f,f_{1})(R_{\lambda}f_{1},f_{1}),
\]
or
\begin{equation}
\sigma_{1}(R_{\lambda}f_{1},f_{1})+1=0. \label{sigma eq.}%
\end{equation}
The function $\phi(\lambda)=(R_{\lambda}f_{1},f_{1})$ is strictly increasing
on the interval $(a,b)$. Indeed, applying the resolvent identity we get%
\[
\frac{d\phi(\lambda)}{d\lambda}=(R_{\lambda}^{2}f_{1},f_{1})=\left\Vert
R_{\lambda}f_{1}\right\Vert ^{2}>0.
\]
It follows that equation (\ref{sigma eq.}) has at most one solution lying in
the interval $(a,b)$. Assume that equation (\ref{sigma eq.}) has a solution,
denote it $\lambda_{\ast}$. Then (\ref{sigma-eigenfunction}) implies that the
vector $f_{\ast}:=R_{\lambda_{\ast}}f_{1}/\left\Vert R_{\lambda_{\ast}}%
f_{1}\right\Vert $ satisfies the equation%
\[
Hf_{\ast}-\lambda_{\ast}f_{\ast}=0.
\]
Thus the operator $H$ has at most one eigenvalue in the interval $(a,b)$.

Without loss of generality we may provide the induction from $N=1$ to $N=2$.
Thus assuming that the perturbation operator $B$ is of the form%
\[
Bf=\sigma_{1}(f,f_{1})f_{1}+\sigma_{2}(f,f_{2})f_{2}%
\]
we set
\[
A^{\prime}f:=Af+\sigma_{1}(f,f_{1})f_{1}%
\]
and%
\[
Hf:=A^{\prime}f+\sigma_{2}(f,f_{2})f_{2}.
\]
Observe that the operator $A^{\prime}$ may have in the interval $(a,b)$ at
most one eigenvalue $\lambda_{\ast}$. The corresponding eigenspace is
one-dimensional, call it $\left\langle f_{\ast}\right\rangle $, where
$f_{\ast}:=R_{\lambda_{\ast}}f_{1}/\left\Vert R_{\lambda_{\ast}}%
f_{1}\right\Vert $. Let us consider two cases.

\underline{First case}: Assume that $f_{2}\perp f_{\ast}$. Then $Hf_{\ast
}=A^{\prime}f_{\ast}=\lambda_{\ast}f_{\ast}$, i.e. $\lambda_{\ast}$ is an
eigenvalue of the operator $H$. It follows that the orthogonal complement
$\left\langle f_{\ast}\right\rangle ^{\perp}$ is a joint invariant subspace of
the operators $H$ and $A^{\prime}$ and that these operators being restricted
to $\left\langle f_{\ast}\right\rangle ^{\perp}$, call them $H_{\perp}$ and
$A_{\perp}^{\prime}$, satisfy
\[
H_{\perp}f=A_{\perp}^{\prime}f+\sigma_{2}(f,f_{2})f_{2}.
\]
The operator $A_{\perp}^{\prime}$ has no eigenvalues in the interval $(a,b)$.
Hence, by what we have already shown in the first part of the proof, the
operator $H_{\perp}$ has at most one eigenvalue in the interval $(a,b)$. It
follows that the operator $H$ has at most two eigenvalues in the interval
$(a,b)$.

\underline{Second case:} Assume that $f_{2}$ and $f_{\ast}$ are not
orthogonal. Let $\mathcal{R}_{\lambda}:=(H-\lambda\mathrm{I})^{-1}$ and
$R_{\lambda}^{\prime}:=(A^{\prime}-\lambda\mathrm{I})^{-1}$ be the resolvent
operators. The following identity holds true%
\begin{equation}
(\mathcal{R}_{\lambda}f,g)=(R_{\lambda}^{\prime}f,g)-\frac{\sigma
_{2}(R_{\lambda}^{\prime}f,f_{2})(R_{\lambda}^{\prime}f_{2},g)}{1+\sigma
_{2}(R_{\lambda}^{\prime}f_{2},f_{2})} \label{resolvent eq}%
\end{equation}
for any $f,g$ and $\lambda\neq\lambda_{\ast}$ lying in $(a,b)$. Using the
spectral resolution formula for the operator $A^{\prime}$, the fact that its
spectral function $E_{\lambda}$ in $(a,b)$ has the only jump at $\lambda
=\lambda_{\ast}$ and that the value of the jump $\Delta E_{\lambda_{\ast}}$ is
the operator of orthogonal projection on the subspace $\left\langle f_{\ast
}\right\rangle $ we get%
\begin{equation}
(R_{\lambda}^{\prime}f,f)=\frac{(f_{\ast},f)^{2}}{\lambda-\lambda_{\ast}%
}+O_{1}(1) \label{asymptotic 1}%
\end{equation}
and%
\begin{equation}
(R_{\lambda}^{\prime}f,f_{2})=\frac{(f_{\ast},f)(f_{\ast},f_{2})}%
{\lambda-\lambda_{\ast}}+O_{2}(1) \label{asymptotic 2}%
\end{equation}
where $O_{i}(1)$ are analytic functions. Substituting asymptotic equations
(\ref{asymptotic 1}) and (\ref{asymptotic 2}) in equation (\ref{resolvent eq})
we get analyticity of the function $\lambda\rightarrow$ $(\mathcal{R}%
_{\lambda}f,f)$ at $\lambda=\lambda_{\ast}$. In particular, this shows that
$\lambda=\lambda_{\ast}$ is not an eigenvalue of $H$.

On the other hand $\lambda_{\ast}$ splits the interval $(a,b)$ in two parts
$(a,\lambda_{\ast})$ and $(\lambda_{\ast},b)$ each of which does not contain
eigenvalues of the operator $A^{\prime}$. Then, as we have already shown, each
of these intervals contains at most one eigenvalue of the operator $H$. Since
$\lambda_{\ast}$ is not an eigenvalue of the operator $H$, the number of
distinct eigenvalues of $H$ in the interval $(a,b)$ is at most two. The proof
of the lemma is finished.
\end{proof}

\bigskip

\textbf{Proof of Theorem} \ref{pure point spectrum} (second part): Let
$\lambda\in Spec_{d}(H)$ and let $\psi(x)$ be the corresponding eigenfunction,
i.e.
\[
H\psi(x)-\lambda\psi(x)=0.
\]
We have%
\[
L\psi(x)-\lambda\psi(x)=\sum_{i=1}^{N}\sigma_{i}\psi(a_{i})\delta_{a_{i}}(x)
\]
or applying to this equation the resolvent operator $(L-\lambda)^{-1}$ we get%
\begin{equation}
\psi(x)=\sum_{i=1}^{N}\sigma_{i}\psi(a_{i})\mathcal{R}(\lambda,x,a_{i}).
\label{eigenfunctions}%
\end{equation}
Taking consequently $x=a_{1},a_{2},...,a_{N}$ in equation
(\ref{eigenfunctions}) we obtain a homogeneous system of $N$ linear equations
with $N$ variables%
\begin{equation}
\psi(a_{j})=\sum_{i=1}^{N}\sigma_{i}\psi(a_{i})\mathcal{R}(\lambda,a_{j}%
,a_{i}) \label{System Eq'}%
\end{equation}
or in the vector form%
\begin{equation}
\Psi=\mathcal{R}(\lambda,\overrightarrow{a},\overrightarrow{a})\Sigma\Psi,
\label{Systen Eq}%
\end{equation}
where $\Psi=(\psi(a_{i}):i=1,...,N)$. The system (\ref{Systen Eq}) has a
non-trivial solution if and only if
\begin{equation}
\det(\Sigma^{-1}-\mathcal{R}(\lambda,\overrightarrow{a},\overrightarrow{a})=0.
\label{det-Eq}%
\end{equation}
Observe that the variable $z:=\mathcal{R}(\lambda,a_{i},a_{i})$ does not
depend on $a_{i},$ and its range is the whole interval $]-\infty,\infty
\lbrack$ when $\lambda$ takes values in each of open interval $]\lambda
_{k+1},\lambda_{k}[.$ Equation (\ref{det-Eq}) can be written as characteristic
equation
\begin{equation}
\det(\mathfrak{A}-z\mathrm{I})=0 \label{char. Eq}%
\end{equation}
where $\mathfrak{A}=(\mathfrak{a}_{ij})_{i,j=1}^{N}$ is symmetric $N\times N$
matrix with entries
\begin{equation}
\text{\ \ }\mathfrak{a}_{ij}=\left\{
\begin{array}
[c]{ccc}%
1/\sigma_{i}\  & \text{for} & i=j\\
-\mathcal{R}(\lambda,a_{i},a_{j}) & \text{for} & i\neq j
\end{array}
\right.  . \label{matrix A-fract}%
\end{equation}
Let us compute $\mathcal{R}(\lambda,a_{i},a_{j})$. For any two neighboring
balls $B\subset B^{\prime}$ let us denote
\[
A(B)=\frac{1}{m(B)}-\frac{1}{m(B^{\prime})}.
\]
\emph{Remember that we normalize} $m$ \emph{so that} $m(B)=\mathrm{diam}(B)$
\emph{for any non-singleton ball} $B$ whence for such $B$,%
\begin{equation}
A(B)=\frac{1}{\mathrm{diam}(B)}-\frac{1}{\mathrm{diam}(B^{\prime})}.
\label{A(B)}%
\end{equation}
Let $a_{i}\curlywedge a_{j}$ be the minimal ball which contains both $a_{i}$
and $a_{j}$. Following the same line of reasons as in the proof of equation
(\ref{Resolvent poles}) we obtain
\begin{equation}
\mathcal{R}(\lambda,a_{i},a_{i})=%
{\displaystyle\sum\limits_{B:\text{ }a_{i}\in B}}
\frac{A(B)}{\lambda(B)-\lambda}. \label{R on diag}%
\end{equation}
Similarly, for all $i\neq j$ we get
\begin{equation}
\mathcal{R}(\lambda,a_{i},a_{j})=-\frac{\mathrm{d}(a_{i},a_{j})^{-1}}%
{\lambda(a_{i}\curlywedge a_{j})-\lambda}+%
{\displaystyle\sum\limits_{B:\text{ }a_{i}\curlywedge a_{j}\subset B}}
\frac{A(B)}{\lambda(B)-\lambda}. \label{R outside diag}%
\end{equation}
Let $\lambda>\lambda(a_{i}\curlywedge a_{j})$. Equations (\ref{A(B)}),
(\ref{R outside diag}) and the fact $S\subset T\Rightarrow$ $\lambda
(S)>\lambda(T)$ imply that
\begin{align*}
\mathcal{R}(\lambda,a_{i},a_{j})  &  =\frac{\mathrm{d}(a_{i},a_{j})^{-1}%
}{\lambda-\lambda(a_{i}\curlywedge a_{j})}-%
{\displaystyle\sum\limits_{B:\text{ }a_{i}\curlywedge a_{j}\subset B}}
\frac{A(B)}{\lambda-\lambda(B)}\\
&  >\frac{\mathrm{d}(a_{i},a_{j})^{-1}}{\lambda-\lambda(a_{i}\curlywedge
a_{j})}-\frac{1}{\lambda-\lambda(a_{i}\curlywedge a_{j})}%
{\displaystyle\sum\limits_{B:\text{ }a_{i}\curlywedge a_{j}\subset B}}
A(B)\\
&  =\frac{1}{\lambda-\lambda(a_{i}\curlywedge a_{j})}\left(  \frac
{1}{\mathrm{d}(a_{i},a_{j})}-\frac{1}{\mathrm{diam}(a_{i}\curlywedge
a_{j})^{\prime}}\right)  >0.
\end{align*}
Hence for $\lambda>\lambda(a_{i}\curlywedge a_{j})$ we obtain
\begin{equation}
0<\mathcal{R}(\lambda,a_{i},a_{j})<\frac{\mathrm{d}(a_{i},a_{j})^{-1}}%
{\lambda-\lambda(a_{i}\curlywedge a_{j})}. \label{R-lambda-ineq}%
\end{equation}
Notice that $\lambda(B)\rightarrow0$ as $\mathrm{diam}(B)\rightarrow\infty$.
Let us fix $k$ and let us consider $\lambda>\lambda_{k+1}$. Let us choose
$\delta>1$ such that if $\min_{i\neq j}d(a_{i},a_{j})\geq\delta$ then
$\lambda(a_{i}\curlywedge a_{j})<\lambda_{k}/2$. Then for all $i\neq j$ we get
$\lambda-\lambda(a_{i}\curlywedge a_{j})>\lambda_{k}/2$ and thus%
\begin{equation}
\left\vert \mathcal{R}(\lambda,a_{i},a_{j})\right\vert <\frac{2}{\delta
\lambda_{k}}:=\frac{\varepsilon(\delta)}{N}. \label{R-lambda inequality}%
\end{equation}
Let us increase if necessary $\delta$ so that the intervals
\[
\{s:\left\vert 1/\sigma_{i}-s\right\vert \leq\varepsilon(\delta)\},\text{
}i=1,2,...,N,
\]
do not intersect. By Gershgorin Circle Theorem the matrix $\mathfrak{A}$
admits $N$ different eigenvalues $\mathfrak{a}_{i}$ each of which lies in the
corresponding open interval
\[
\{s:\left\vert 1/\sigma_{i}-s\right\vert <\varepsilon(\delta)\},\text{
}i=1,2,...,N.
\]
The eigenvalues $\mathfrak{a}_{i},i=1,2,...,N,$ are analytic functions of
$\lambda$ in each open interval $(\lambda_{s+1},\lambda_{s})$, $1\leq s\leq
k$, see \cite[Theorem XII.1]{ReedSimon}. Whence in each interval
$(\lambda_{s+1},\lambda_{s})$ the number of different solutions of the
equations $\mathfrak{a}_{i}=\mathcal{R}(\lambda,a_{i},a_{i})$ is at least $N$.
By Lemma \ref{Weyl perturbation} the number of different solutions is at most
$N$. Thus the number of different solutions is precisely $N$ as claimed.

\begin{theorem}
\label{E-f for discr.spec.}The set $Spec_{d}(H)$ coincides with the set of
solutions of equation (\ref{discrete spectrum}). Each eigenfunction
$\psi_{\lambda}(x)$ corresponding to $\lambda\in Spec_{d}(H)$ can be
represented as linear combination of functions $\mathcal{R}(\lambda,x,a_{i})$,
that is,%
\[
\psi_{\lambda}(x)=\sum_{i=1}^{N}\zeta_{i}\mathcal{R}(\lambda,x,a_{i}).
\]
Thus, support of $\psi_{\lambda}$ is the whole space $X$ whereas the
eigenfunctions $f_{B}$ corresponding to the eigenvalues $\lambda(B)\in
Spec_{ess}(H)$ are compactly supported.
\end{theorem}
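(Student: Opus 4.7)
The plan is to extract the first two assertions directly from the computation already made in the proof of Theorem~\ref{pure point spectrum}, and then to derive the contrast with essential-spectrum eigenfunctions from the explicit spectral expansion of the Green kernels $\mathcal{R}(\lambda,\cdot,a_i)$.

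The forward direction of the $Spec_d(H)$ characterization, together with the eigenfunction representation, is essentially free: for $\lambda\in Spec_d(H)$, Theorem~\ref{pure point spectrum}(1) guarantees $\lambda\notin Spec(L)=Spec_{ess}(H)$, so applying the bounded operator $(L-\lambda)^{-1}$ to the rewritten eigenvalue equation $(L-\lambda)\psi=\sum_i\sigma_i\psi(a_i)\delta_{a_i}$ produces exactly (\ref{eigenfunctions}), i.e.\ $\psi_\lambda(x)=\sum_i\zeta_i\mathcal{R}(\lambda,x,a_i)$ with $\zeta_i=\sigma_i\psi_\lambda(a_i)$; evaluating at $x=a_j$ then reproduces the linear system (\ref{System Eq'})--(\ref{Systen Eq}) whose non-trivial solvability is (\ref{discrete spectrum}). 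For the converse I would run the same computation backwards: a solution $\lambda\notin Spec(L)$ of (\ref{discrete spectrum}) admits a non-trivial kernel vector $\Psi$ of $\Sigma^{-1}-\mathcal{R}(\lambda,\overrightarrow a,\overrightarrow a)$; setting $\zeta_i:=\sigma_i\Psi_i$ and defining $\psi$ by the same formula, one checks that $\psi(a_j)=\Psi_j$ by (\ref{System Eq'}), so $(L-\lambda)\psi=\sum_i\zeta_i\delta_{a_i}=-V\psi$, hence $H\psi=\lambda\psi$. Non-triviality of $\psi$ in $L^2(X,m)$ is automatic because $(L-\lambda)^{-1}$ is an injective bounded operator and maps the linearly independent family $\{\delta_{a_i}\}$ to the linearly independent family $\{\mathcal{R}(\lambda,\cdot,a_i)\}$, so at least one $\zeta_i\neq 0$ forces $\psi\neq 0$.

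For the support dichotomy, I would invoke the spectral expansion along the infinite geodesic $\{a_i\}=B_0^{(i)}\subsetneq B_1^{(i)}\subsetneq\cdots$ in the tree of balls, which (mimicking the derivation of (\ref{Resolvent poles}) from (\ref{delta_a decomposition})) gives
\[
\mathcal{R}(\lambda,\cdot,a_i)=\sum_{k=0}^{\infty}\frac{f_{B_k^{(i)}}}{\lambda(B_{k+1}^{(i)})-\lambda}.
\]
Each summand $f_{B_k^{(i)}}$ lives on the parent ball $B_{k+1}^{(i)}$, whose diameter grows without bound, so $\mathcal{R}(\lambda,\cdot,a_i)$ carries non-trivial components at every scale. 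Consequently $\psi_\lambda=\sum_i\zeta_i\mathcal{R}(\lambda,\cdot,a_i)$ with some $\zeta_i\neq 0$ cannot be supported inside a single ball, and its support extends over all of $X$; this is in stark contrast to the essential-spectrum eigenfunctions $f_B$ which, by their defining formula (\ref{eigenfunction}), are supported in a single parent ball. The main obstacle I anticipate lies precisely here: once the rank exceeds the level at which all $a_i$ merge into a common ball, the tails of the series collapse into a single $f_{B_k}$ with coefficient proportional to $\sum_i\zeta_i/(\lambda(B_{k+1})-\lambda)$, so one must argue that the non-degeneracy of the eigenvalue problem for $\lambda\in Spec_d(H)$ prevents the accidental cancellation $\sum_i\zeta_i=0$, guaranteeing that the large-scale tails are genuinely non-trivial and the support is truly all of $X$.
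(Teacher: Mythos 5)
Your handling of the first two assertions is essentially the paper's own argument: the printed proof is a one-line back-reference to equations (\ref{eigenfunctions}) and (\ref{System Eq'}) from the second part of the proof of Theorem \ref{pure point spectrum}, which is exactly the forward computation you describe, and your explicit converse is a correct and worthwhile supplement up to one bookkeeping slip. If $\Psi$ is a null vector of $\Sigma^{-1}-\mathcal{R}(\lambda,\overrightarrow{a},\overrightarrow{a})$ and you set $\zeta_{i}=\sigma_{i}\Psi_{i}$, then $\psi(a_{j})=(\mathcal{R}(\lambda,\overrightarrow{a},\overrightarrow{a})\Sigma\Psi)_{j}$, which need not equal $\Psi_{j}$; you should instead take $\zeta_{i}=\Psi_{i}$ (then $\psi(a_{j})=(\mathcal{R}\Psi)_{j}=\Psi_{j}/\sigma_{j}$ and $(H-\lambda)\psi=0$ follows), or equivalently work with a null vector of $\mathrm{I}-\mathcal{R}(\lambda,\overrightarrow{a},\overrightarrow{a})\Sigma$.

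The genuine gap is in the support dichotomy, and you have located it precisely but not closed it: the cancellation $\sum_{i}\zeta_{i}=0$ is not an accident that non-degeneracy excludes --- it actually occurs. Take $N=2$, $\sigma_{1}=\sigma_{2}=\sigma$, and let $T=a_{1}\curlywedge a_{2}$. By symmetry $(1,-1)$ is an eigenvector of $\mathcal{R}(\lambda,\overrightarrow{a},\overrightarrow{a})$, and the corresponding branch of equation (\ref{discrete spectrum}) reads $1/\sigma=\mathcal{R}(\lambda,a_{1},a_{1})-\mathcal{R}(\lambda,a_{1},a_{2})$; the right-hand side is a finite sum $\sum_{k}c_{k}/(\lambda_{k}-\lambda)$ with $c_{k}>0$ over the balls $a_{1}\in B\subseteq T$ only, so for suitable $\sigma$ it has a root $\lambda^{\ast}$ inside a spectral gap of $L$, hence $\lambda^{\ast}\in Spec_{d}(H)$. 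Its eigenfunction is $\psi=\mathcal{R}(\lambda^{\ast},\cdot,a_{1})-\mathcal{R}(\lambda^{\ast},\cdot,a_{2})$, and since the two geodesic expansions coincide from the level of $T$ onward, the entire tail cancels and $\psi$ is supported inside $T$, i.e.\ compactly supported. So the third assertion cannot be established by your (or any) argument for arbitrary finite-rank potentials; it does hold for $N=1$, where the value of $\mathcal{R}(\lambda,\cdot,a)$ on the annulus $B_{k+1}\setminus B_{k}$ differs from its value on $B_{k+2}\setminus B_{k+1}$ by $m(B_{k+1})^{-1}\bigl((\lambda(B_{k+2})-\lambda)^{-1}-(\lambda(B_{k+1})-\lambda)^{-1}\bigr)\neq0$, so two consecutive annuli cannot both vanish and the support is unbounded (though even this gives "not compactly supported" rather than literally "support $=X$"), and it holds under hypotheses forcing $\sum_{i}\zeta_{i}\neq0$, e.g.\ the Gershgorin regime of Theorem \ref{pure point spectrum}(2) with pairwise distinct $\sigma_{i}$, where the null vector is close to a coordinate vector. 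The paper's proof is silent on all of this, so you have in fact exposed a real weakness of the statement rather than merely left a routine step unfinished; I would recommend either adding such a hypothesis or weakening the conclusion.
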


\begin{proof}
The proof is straightforward: we apply equations (\ref{eigenfunctions}) and
(\ref{System Eq'}) to get the result, see the first part of the proof of
Theorem \ref{pure point spectrum} (second statement).
\end{proof}

\begin{theorem}
\label{perturbed resolvent}For $\lambda\notin Spec(H)$ the following
identities hold true:
\begin{equation}
\mathcal{R}_{V}(\lambda,x,y)=\mathcal{R}(\lambda,x,y)+\mathcal{R}%
(\lambda,x,\overrightarrow{a})(\Sigma^{-1}-\mathcal{R}(\lambda,\overrightarrow
{a},\overrightarrow{a}))^{-1}\mathcal{R}(\lambda,\overrightarrow
{a},y),\footnote{For a matrix $A$ and vectors $\xi$ and $\eta$ we write $\xi
A\eta:=\sum_{i,j}a_{ij}\xi_{i}\eta_{j}.$ } \label{R_V}%
\end{equation}%
\begin{equation}
\Sigma\mathcal{R}_{V}(\lambda,\overrightarrow{a},y)=(\Sigma^{-1}%
-\mathcal{R}(\lambda,\overrightarrow{a},\overrightarrow{a}))^{-1}%
\mathcal{R}(\lambda,\overrightarrow{a},y) \label{R_V1}%
\end{equation}
and%
\begin{equation}
\Sigma\mathcal{R}_{V}(\lambda,\overrightarrow{a},\overrightarrow{a}%
)=(\Sigma^{-1}-\mathcal{R}(\lambda,\overrightarrow{a},\overrightarrow
{a}))^{-1}\mathcal{R}(\lambda,\overrightarrow{a},\overrightarrow{a}).
\label{R_V2}%
\end{equation}
In particular, the operator $T(\lambda):=(H-\lambda\mathrm{I})^{-1}%
-(L-\lambda\mathrm{I})^{-1}$ is of finite rank $N$. Its operator norm can be
estimated as follows
\begin{equation}
\left\Vert T(\lambda)\right\Vert \leq\left\Vert (\Sigma^{-1}-\mathcal{R}%
(\lambda,\overrightarrow{a},\overrightarrow{a}))^{-1}\right\Vert \left\Vert
(L-\lambda\mathrm{I})^{-1}\right\Vert ^{2}. \label{R_V3}%
\end{equation}

\end{theorem}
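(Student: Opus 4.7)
The plan is to generalize the rank-one Krein identity of Theorem \ref{Krein identity I} to the rank-$N$ setting by solving a finite linear system, and then to read off the rank and norm statements from a Woodbury-type factorization.

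First, since $V=-\sum_{i=1}^{N}\sigma_{i}\delta_{a_{i}}$, the defining equation $H\psi_V(x)-\lambda\psi_V(x)=\delta_y(x)$ rewrites as
\[
(L-\lambda\mathrm{I})\psi_V(x)=\delta_y(x)+\sum_{i=1}^N \sigma_i\,\psi_V(a_i)\,\delta_{a_i}(x).
\]
Applying the free resolvent $(L-\lambda\mathrm{I})^{-1}$ yields
\[
\psi_V(x)=\mathcal{R}(\lambda,x,y)+\sum_{i=1}^N \sigma_i\,\psi_V(a_i)\,\mathcal{R}(\lambda,x,a_i), \qquad (\ast)
\]
which is the $N$-point analogue of (\ref{Krein I''}).

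Next, I would evaluate $(\ast)$ at $x=a_1,\ldots,a_N$ and read the resulting equations as a single vector identity for $\Psi_V := \mathcal{R}_V(\lambda,\overrightarrow{a},y) = (\psi_V(a_i))_{i=1}^N$:
\[
\Psi_V=\mathcal{R}(\lambda,\overrightarrow{a},y)+\mathcal{R}(\lambda,\overrightarrow{a},\overrightarrow{a})\,\Sigma\,\Psi_V.
\]
The hypothesis $\lambda\notin\mathrm{Spec}(H)$ together with part 1 of Theorem \ref{pure point spectrum} guarantees that $\Sigma^{-1}-\mathcal{R}(\lambda,\overrightarrow{a},\overrightarrow{a})$ is invertible. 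Using the algebraic identity $\mathrm{I}-\mathcal{R}\Sigma=(\Sigma^{-1}-\mathcal{R})\Sigma$, I can solve for $\Psi_V$ and multiply through by $\Sigma$ to obtain (\ref{R_V1}); running the same argument with $y$ ranging over the points $a_k$ delivers (\ref{R_V2}). Plugging $\Sigma\Psi_V=(\Sigma^{-1}-\mathcal{R}(\lambda,\overrightarrow{a},\overrightarrow{a}))^{-1}\mathcal{R}(\lambda,\overrightarrow{a},y)$ back into $(\ast)$ then produces (\ref{R_V}).

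For the rank and norm claims, the cleanest route is to recast (\ref{R_V}) as a Woodbury-type factorization. Introduce $\Pi\colon\mathbb{C}^N\to L^2(X,m)$ by $\Pi\xi=\sum_i \xi_i\delta_{a_i}$; homogeneity together with the normalization $m(B)=\mathrm{diam}(B)$ forces $m(\{a\})$ to equal a common constant, which we may take to be $1$, so that $\{\delta_{a_i}\}$ is an orthonormal set in $L^2(X,m)$ and $\|\Pi\|=\|\Pi^{*}\|=1$. Then $V=-\Pi\Sigma\Pi^{*}$ and $\Pi^{*}(L-\lambda\mathrm{I})^{-1}\Pi=\mathcal{R}(\lambda,\overrightarrow{a},\overrightarrow{a})$, so (\ref{R_V}) is the integral-kernel expression of
\[
T(\lambda)=(L-\lambda\mathrm{I})^{-1}\,\Pi\,\bigl(\Sigma^{-1}-\mathcal{R}(\lambda,\overrightarrow{a},\overrightarrow{a})\bigr)^{-1}\,\Pi^{*}(L-\lambda\mathrm{I})^{-1}.
\]
Since $\mathrm{Range}(\Pi)$ is $N$-dimensional, $T(\lambda)$ has rank at most $N$; submultiplicativity of the operator norm, combined with $\|\Pi\|=\|\Pi^{*}\|=1$, then yields the estimate (\ref{R_V3}). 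The one point requiring real attention is the singleton normalization: without $m(\{a\})=1$ the map $\Pi$ picks up scalar factors that must be absorbed into either $\Sigma$ or $\mathcal{R}(\lambda,\overrightarrow{a},\overrightarrow{a})$, and the cosmetic form of (\ref{R_V1})--(\ref{R_V3}) changes accordingly. Apart from this bookkeeping, the argument is purely algebraic once $(\ast)$ and the invertibility of $\Sigma^{-1}-\mathcal{R}(\lambda,\overrightarrow{a},\overrightarrow{a})$ are secured.
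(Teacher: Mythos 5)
Your proposal is correct and follows essentially the same route as the paper: derive the pointwise identity $(\ast)$ by applying the free resolvent to $H\psi_V-\lambda\psi_V=\delta_y$, evaluate at the points $a_1,\dots,a_N$ to obtain the finite linear system, invert $\Sigma^{-1}-\mathcal{R}(\lambda,\overrightarrow{a},\overrightarrow{a})$, and substitute back to get (\ref{R_V})--(\ref{R_V2}). The only cosmetic difference is at the end: the paper obtains the rank statement from the second resolvent identity $T(\lambda)=(H-\lambda\mathrm{I})^{-1}(L-H)(L-\lambda\mathrm{I})^{-1}$ and the bound (\ref{R_V3}) by estimating the quadratic form $(T(\lambda)f,f)$, whereas your Woodbury factorization packages the same estimate through $\left\Vert \Pi\right\Vert=\left\Vert \Pi^{*}\right\Vert=1$; your remark about the singleton normalization $m(\{a\})=1$ is a legitimate bookkeeping point that the paper leaves implicit.
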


\begin{proof}
Recall that $Spec(H)$ coincides with the union of two sets: $Spec(L)$ and the
set of those $\lambda\in\mathbb{R}$ for which $\det(\Sigma^{-1}-\mathcal{R}%
(\lambda,\overrightarrow{a},\overrightarrow{a}))=0$. The proof of the theorem
is similar to its one-dimensional version Theorem \ref{Krein identity I}.
Clearly we can write the following equation
\begin{align*}
L\mathcal{R}_{V}(\lambda,x,y)-\lambda\mathcal{R}_{V}(\lambda,x,y)  &
=\delta_{y}(x)+\sum_{i=1}^{N}\sigma_{j}\delta_{a_{j}}(x)\mathcal{R}%
_{V}(\lambda,x,y)\\
&  =\delta_{y}(x)+\sum_{j=1}^{N}\sigma_{j}\mathcal{R}_{V}(\lambda
,a_{j},y)\delta_{a_{j}}(x),
\end{align*}
or equivalently%
\begin{equation}
\mathcal{R}_{V}(\lambda,x,y)=\mathcal{R}(\lambda,x,y)+\sum_{j=1}^{N}\sigma
_{j}\mathcal{R}_{V}(\lambda,a_{j},y)\mathcal{R}(\lambda,x,a_{j}). \label{RV1}%
\end{equation}
Substituting consequently $x=a_{1},a_{2},...,a_{N}$ we obtain system of $N$
linear equations with $N$ variables%
\[
\mathcal{R}_{V}(\lambda,a_{i},y)=\mathcal{R}(\lambda,a_{i},y)+\sum_{j=1}%
^{N}\sigma_{j}\mathcal{R}(\lambda,a_{i},a_{j})\mathcal{R}_{V}(\lambda
,a_{j},y)
\]
or in the vector form%
\begin{equation}
(\mathrm{I}-\mathcal{R}(\lambda,\overrightarrow{a},\overrightarrow{a}%
)\Sigma)\mathcal{R}_{V}(\lambda,\overrightarrow{a},y)=\mathcal{R}%
(\lambda,\overrightarrow{a},y). \label{RV2}%
\end{equation}
Assuming that $\lambda\notin Spec(H)$, in particular $\det(\mathrm{I}%
-\mathcal{R}(\lambda,\overrightarrow{a},\overrightarrow{a})\Sigma)\neq0$, we
get
\begin{equation}
\mathcal{R}_{V}(\lambda,\overrightarrow{a},y)=(\mathrm{I}-\mathcal{R}%
(\lambda,\overrightarrow{a},\overrightarrow{a})\Sigma)^{-1}\mathcal{R}%
(\lambda,\overrightarrow{a},y) \label{RV3}%
\end{equation}
Evidently equations (\ref{RV1}) and (\ref{RV3}) imply equations (\ref{R_V}),
(\ref{R_V1}) and (\ref{R_V2}).

Equation $T(\lambda)=(H-\lambda\mathrm{I})^{-1}(L-H)(L-\lambda\mathrm{I}%
)^{-1}$ applies that $T(\lambda))$ is of rank $N$. Finally, equation
(\ref{R_V3}) follows from equation (\ref{R_V}). Indeed, for $f\in L^{2}(X,m)$
we introduce (finite-dimensional) vectors $\mathcal{R}(\lambda
,f,\overrightarrow{a}):=\sum_{x}f(x)\mathcal{R}(\lambda,x,\overrightarrow{a})$
and $\mathcal{R}(\lambda,\overrightarrow{a},f):=\sum_{y}f(x)\mathcal{R}%
(\lambda,\overrightarrow{a},y)$, then
\begin{align*}
(T(\lambda)f,f)  &  =\sum_{x,y}f(x)\mathcal{R}(\lambda,x,\overrightarrow
{a})(\Sigma^{-1}-\mathcal{R}(\lambda,\overrightarrow{a},\overrightarrow
{a}))^{-1}\mathcal{R}(\lambda,\overrightarrow{a},y)f(y)\\
&  =\mathcal{R}(\lambda,f,\overrightarrow{a})(\Sigma^{-1}-\mathcal{R}%
(\lambda,\overrightarrow{a},\overrightarrow{a}))^{-1}\mathcal{R}%
(\lambda,\overrightarrow{a},f).
\end{align*}
By symmetry $\mathcal{R}(\lambda,\overrightarrow{a},f)=\mathcal{R}%
(\lambda,f,\overrightarrow{a})$, whence
\begin{align*}
\left\vert (T(\lambda)f,f)\right\vert  &  \leq\left\Vert (\Sigma
^{-1}-\mathcal{R}(\lambda,\overrightarrow{a},\overrightarrow{a}))^{-1}%
\right\Vert \left\Vert \mathcal{R}(\lambda,\overrightarrow{a},f)\right\Vert
^{2}\\
&  \leq\left\Vert (\Sigma^{-1}-\mathcal{R}(\lambda,\overrightarrow
{a},\overrightarrow{a}))^{-1}\right\Vert \left\Vert (L-\lambda\mathrm{I}%
)^{-1}f\right\Vert ^{2}\\
&  \leq\left\Vert (\Sigma^{-1}-\mathcal{R}(\lambda,\overrightarrow
{a},\overrightarrow{a}))^{-1}\right\Vert \left\Vert (L-\lambda\mathrm{I}%
)^{-1}\right\Vert ^{2}\left\Vert f\right\Vert ^{2}%
\end{align*}
as desired. The proof of the theorem is finished.
\end{proof}

\section{Sparse potentials}

We assume that the ultrametric measure space $(X,d,m)$ is \emph{countably
infinite and homogeneous. }Our analysis of finite rank potentials
$V=-\sum_{i=1}^{N}\sigma_{i}\delta_{a_{i}}$ indicates that in the case of
increasing distances between locations $\{a_{i}\}$ of the \emph{bumps}
$V_{i}=-\sigma_{i}\delta_{a_{i}}$ their contributions to the spectrum of
$H=L+V$ is close to the union of the contributions of the individual bumps
$V_{i}$ (each bump contributes one eigenvalue in each gap $(\lambda
_{m+1},\lambda_{m})$ of the spectrum of the operator $L$).

The development of this idea leads to consideration of the class of
\emph{sparse potentials }$V=-\sum_{i=1}^{\infty}\sigma_{i}\delta_{a_{i}}$
where distances between locations $\{a_{i}:i=1,2,...\}$ form a fast increasing
sequence. In the classical theory this idea goes back to D. B. Pearson
\cite{Pearson}, see also S. Molchanov \cite{Molchanov2} and A. Kiselev, J.
Last, S. and B. Simon \cite{KiselevLastSimon}.

Throughout this section we will assume that the sequence $\min_{i,j:\geq
n,\text{ }i\neq j}\mathrm{d}(a_{i},a_{j})$ tend to infinity with certain rate
which will be specified later\footnote{We choose the ultrametric
\textrm{d}$(x,y)$ such that it coinsides with the measure $m(B)$ of the
minimal ball $B$ which contains both $x$ and $y$, see e.g. (\ref{A(B)}).}. We
will also assume that $\alpha<\sigma_{i}<\beta$ for all $i$ and for some
$\alpha,\beta>0$. For $\lambda\notin Spec(L)$ we define the following infinite
vectors and matrices:

\begin{itemize}
\item $\mathcal{R}(\lambda,x,\overrightarrow{a}):=(\mathcal{R}(\lambda
,x,a_{i}):i=1,2,...).$

\item $\mathcal{R}(\lambda,\overrightarrow{a},\overrightarrow{a}%
):=(\mathcal{R}(\lambda,a_{i},a_{j}):i,j=1,2,...).$

\item $\Sigma:=\mathrm{diag}(\sigma_{i}:i=1,2,...),$ $\Sigma^{-1}%
:=\mathrm{diag}(1/\sigma_{i}:i=1,2,...).$
\end{itemize}

\begin{theorem}
\label{l^2-thm}The following properties hold true:

\begin{description}
\item[(i)] $\mathcal{R}(\lambda,x,\overrightarrow{a})\in l^{2}.$

\item[(ii)] $\mathcal{R}(\lambda,\overrightarrow{a},\overrightarrow{a})$,
$\Sigma$ and $\Sigma^{-1}$ act in $l^{2}$ as bounded symmetric operators.

\item[(iii)] If the operator $\mathfrak{B}(\lambda)=\Sigma^{-1}-\mathcal{R}%
(\lambda,\overrightarrow{a},\overrightarrow{a})$ has a bounded inverse, then
\begin{equation}
\mathcal{R}_{V}(\lambda,x,y)=\mathcal{R}(\lambda,x,y)+\mathcal{R}%
(\lambda,x,\overrightarrow{a})\mathfrak{B}(\lambda)^{-1}\mathcal{R}%
(\lambda,\overrightarrow{a},y). \label{R_V infty}%
\end{equation}

\end{description}
\end{theorem}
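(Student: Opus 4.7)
The plan is to mirror the finite-rank proof of Theorem \ref{perturbed resolvent}, with every infinite sum and every matrix inversion justified by a Schur-type estimate that the sparseness hypothesis makes available. For (i), I would start from the expansions (\ref{Resolvent poles})--(\ref{R outside diag}) of the resolvent kernel and the off-diagonal bound (\ref{R-lambda-ineq}). Together these yield a pointwise decay of the form $|\mathcal{R}(\lambda,x,a_i)|\leq C(\lambda)/d(x,a_i)$ once $d(x,a_i)$ is sufficiently large compared with $\lambda$. Since for any fixed $x$ the distances $d(x,a_i)$ grow at least as fast as $\min_{j\neq i}d(a_i,a_j)$, the sparseness hypothesis turns this decay into $\sum_i |\mathcal{R}(\lambda,x,a_i)|^2<\infty$.

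For (ii), $\Sigma$ and $\Sigma^{-1}$ are diagonal with entries in $(\alpha,\beta)$ and $(1/\beta,1/\alpha)$, hence bounded and self-adjoint on $\ell^2$. The matrix $\mathcal{R}(\lambda,\overrightarrow{a},\overrightarrow{a})$ has constant diagonal $\mathcal{R}(\lambda,a,a)$ by homogeneity and off-diagonal entries estimated by (\ref{R-lambda-ineq}); symmetry is inherited from the resolvent kernel. Boundedness on $\ell^2$ then reduces, via the Schur test applied to the symmetric matrix, to an estimate
\[
\|\mathcal{R}(\lambda,\overrightarrow{a},\overrightarrow{a})\|_{\ell^2\to\ell^2}\leq |\mathcal{R}(\lambda,a,a)|+\sup_i\sum_{j\neq i}|\mathcal{R}(\lambda,a_i,a_j)|,
\]
whose right-hand side is finite provided the rate of sparseness is taken large enough relative to $\lambda$.

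For (iii), I would follow the derivation in the proof of Theorem \ref{perturbed resolvent} verbatim. Writing $\psi(x)=\mathcal{R}_V(\lambda,x,y)$, the equation $(H-\lambda)\psi=\delta_y$ rearranges as $(L-\lambda)\psi=\delta_y+\sum_j\sigma_j\psi(a_j)\delta_{a_j}$, and applying $(L-\lambda)^{-1}$ gives the integral equation
\[
\mathcal{R}_V(\lambda,x,y)=\mathcal{R}(\lambda,x,y)+\sum_{j}\sigma_j\mathcal{R}_V(\lambda,a_j,y)\mathcal{R}(\lambda,x,a_j).
\]
Restricting $x$ to $\{a_i\}$ produces the vector equation $(\mathrm{I}-\mathcal{R}(\lambda,\overrightarrow{a},\overrightarrow{a})\Sigma)\mathcal{R}_V(\lambda,\overrightarrow{a},y)=\mathcal{R}(\lambda,\overrightarrow{a},y)$. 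Since $(\mathrm{I}-\mathcal{R}(\lambda,\overrightarrow{a},\overrightarrow{a})\Sigma)^{-1}=\Sigma\,\mathfrak{B}(\lambda)^{-1}$ is bounded on $\ell^2$ by hypothesis, this inverts to give $\mathcal{R}_V(\lambda,\overrightarrow{a},y)=\Sigma\,\mathfrak{B}(\lambda)^{-1}\mathcal{R}(\lambda,\overrightarrow{a},y)\in\ell^2$, and substitution back into the integral equation yields (\ref{R_V infty}).

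The main obstacle is the quantitative Schur estimate underlying (i) and (ii): one must specify just how fast $\min_{j\neq i}d(a_i,a_j)$ must grow (in terms of $\lambda$ and of the Green function asymptotics of $L$) so that $\sup_i\sum_j |\mathcal{R}(\lambda,a_i,a_j)|$ is finite. Once this quantitative step is in place, the algebraic manipulations in (iii) are a routine transcription of the finite-rank argument.
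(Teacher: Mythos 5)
Your treatment of (i) and (ii) is a genuinely different route from the paper's, and as written it has a real gap: the quantitative Schur estimate that you yourself flag as ``the main obstacle'' is exactly the step you never supply, and worse, no such estimate is needed. The paper's argument is purely soft. Since $X$ is countable, the vectors $\delta_{a_i}$ are (up to a fixed normalization) orthonormal in $L^{2}(X,m)$, and $\mathcal{R}(\lambda,x,a_i)=(R_\lambda\delta_x)(a_i)$ with $R_\lambda=(L-\lambda\mathrm{I})^{-1}$ bounded for $\lambda\notin Spec(L)$. Hence $\sum_i|\mathcal{R}(\lambda,x,a_i)|^2\leq C\left\Vert R_\lambda\delta_x\right\Vert^2<\infty$ by Bessel's inequality (the paper phrases this by testing against finitely supported $\xi\in l^2$ and using $|R_\lambda f(x)|\leq\left\Vert R_\lambda\right\Vert\left\Vert f\right\Vert$), and $\mathcal{R}(\lambda,\overrightarrow{a},\overrightarrow{a})$ is the compression of $R_\lambda$ to $\overline{\mathrm{span}}\{\delta_{a_i}\}$, so $\xi\mathcal{R}(\lambda,\overrightarrow{a},\overrightarrow{a})\xi=(f,R_\lambda f)\leq\left\Vert R_\lambda\right\Vert\left\Vert\xi\right\Vert^2$. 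This holds for \emph{arbitrary} locations $\{a_i\}$ and every $\lambda\notin Spec(L)$; no sparseness enters. Your route, by contrast, imports the hypothesis (\ref{a_ij-condition}) (or a rate ``large enough relative to $\lambda$''), which is only introduced later for Theorem \ref{Spec_ess} and is not part of this theorem; moreover the pointwise bound (\ref{R-lambda-ineq}) you invoke is derived in the paper only for real $\lambda>\lambda(a_i\curlywedge a_j)$, so extending it to all $\lambda\notin Spec(L)$ (e.g.\ complex $\lambda$, or $\lambda$ below the spectrum) would need a separate argument. In short: your plan proves a conditional, quantitatively restricted version of an unconditional statement, and the one estimate it hinges on is left open.

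For (iii) your derivation is essentially the right algebra (the paper simply passes to the limit from the finite-rank identity (\ref{R_V}) of Theorem \ref{perturbed resolvent}, while you rederive the integral equation directly; either is acceptable once (i)--(ii) guarantee that the infinite sums converge). One slip: from $\mathrm{I}-\mathcal{R}(\lambda,\overrightarrow{a},\overrightarrow{a})\Sigma=\mathfrak{B}(\lambda)\Sigma$ you get $(\mathrm{I}-\mathcal{R}(\lambda,\overrightarrow{a},\overrightarrow{a})\Sigma)^{-1}=\Sigma^{-1}\mathfrak{B}(\lambda)^{-1}$, not $\Sigma\,\mathfrak{B}(\lambda)^{-1}$; with your version the back-substitution produces an extraneous factor $\Sigma^{2}$ and does not reproduce (\ref{R_V infty}). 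With $\mathcal{R}_V(\lambda,\overrightarrow{a},y)=\Sigma^{-1}\mathfrak{B}(\lambda)^{-1}\mathcal{R}(\lambda,\overrightarrow{a},y)$ the $\Sigma$'s cancel correctly and the identity follows.
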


\begin{proof}
Let $\xi=(\xi_{i})\in l^{2}$ has finite number non-zero coordinates. Define
function $f=\sum\xi_{i}\delta_{a_{i}}$. Evidently $f\in L_{2}=L_{2}(X,m)$ and
$\left\Vert f\right\Vert =\left\Vert \xi\right\Vert .$Let $R_{\lambda
}=(L-\lambda\mathrm{I})^{-1}$, $\lambda\notin Spec(L)$, be the resolvent. Then%
\[
\mathcal{R}(\lambda,x,\overrightarrow{a})\xi=\int\mathcal{R}(\lambda
,x,y)f(y)dm(y)=R_{\lambda}f(x)
\]
whence%
\[
\left\vert \mathcal{R}(\lambda,x,\overrightarrow{a})\xi\right\vert
\leq\left\Vert R_{\lambda}\right\Vert \left\Vert f\right\Vert =\left\Vert
R_{\lambda}\right\Vert \left\Vert \xi\right\Vert
\]
which clearly proves $(i).$To prove $(ii)$ we write%
\begin{align*}
\xi\mathcal{R}(\lambda,\overrightarrow{a},\overrightarrow{a})\xi &  =\int\int
f(x)\mathcal{R}(\lambda,x,y)f(y)dm(y)dm(x)\\
&  =(f,R_{\lambda}f)\leq\left\Vert R_{\lambda}\right\Vert \left\Vert
f\right\Vert ^{2}=\left\Vert R_{\lambda}\right\Vert \left\Vert \xi\right\Vert
^{2}%
\end{align*}
which clearly proves boundedness of the symmetric operator $\mathcal{R}%
(\lambda,\overrightarrow{a},\overrightarrow{a}):l^{2}\rightarrow l^{2}$. Since
$\{\sigma_{i}\}\in(\alpha,\beta)$ for all $i$ and some $\alpha,\beta>0$,
boundedness of the operators $\Sigma$ and $\Sigma^{-1}$ follows.

$(iii)$ Assume that $\lambda$ is such that the self-adjoint operator
$\mathfrak{B}(\lambda)$ has a bounded inverse, then equation (\ref{R_V infty})
follows from its finite dimensional version (\ref{R_V}) by passage to limit.
\end{proof}

\begin{theorem}
\label{SpecL in SpecH}$Spec(L)\subset Spec_{ess}(H).$
\end{theorem}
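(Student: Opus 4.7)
The strategy is to show that each nonzero $\lambda_k \in Spec(L)$ is an eigenvalue of $H$ of infinite multiplicity, which places it in $Spec_{ess}(H)$; the remaining point $0$ then follows automatically because $\lambda_k \to 0$ and $Spec_{ess}(H)$ is closed.

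Fix $\lambda_k \in Spec(L)\setminus\{0\}$. From Section~2 the $\lambda_k$-eigenspace of $L$ decomposes as the orthogonal direct sum
\[
\bigoplus_{B \in \mathcal{H}_k}\mathcal{H}(B),
\]
where each $\mathcal{H}(B)$ is a finite-dimensional space of eigenfunctions whose support lies in the parent ball $B$. Call $B \in \mathcal{H}_k$ \emph{free} if $B \cap \{a_i : i\geq 1\} = \emptyset$. For any free $B$ and any $f \in \mathcal{H}(B)$, the support hypothesis gives $Vf \equiv 0$, hence $Hf = Lf = \lambda_k f$. Thus $\bigoplus_{B \text{ free}}\mathcal{H}(B) \subseteq \ker(H-\lambda_k \mathrm{I})$, and the theorem reduces to showing that $\mathcal{H}_k$ contains infinitely many free balls.

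I would prove this key combinatorial fact by contradiction, using the sparseness hypothesis. Assume only finitely many balls of $\mathcal{H}_k$ are free, so cofinitely many of them meet $\{a_i\}$. Move to the parent horocycle: each parent ball $T$ is a disjoint union of $p$ sub-balls from $\mathcal{H}_k$, and for infinitely many $T$ at least two of these sub-balls are occupied. From each such $T$ pick distinct points $a_i$ and $a_{i'}$ lying in two different occupied sub-balls; then $d(a_i, a_{i'}) \leq \mathrm{diam}(T)$. Because distinct parent balls are disjoint, the resulting collection of pairs uses each index at most once, so for any $n$ at most $n-1$ of these pairs involve an index less than $n$, leaving infinitely many pairs with both indices $\geq n$. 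This forces
\[
\min_{i,j\geq n,\ i \neq j} d(a_i,a_j) \leq \mathrm{diam}(T)
\]
uniformly in $n$, contradicting the sparseness assumption that this minimum tends to infinity. Hence $\mathcal{H}_k$ contains infinitely many free balls, and $\ker(H - \lambda_k \mathrm{I})$ is infinite-dimensional, so $\lambda_k \in Spec_{ess}(H)$. Finally, the closedness of $Spec_{ess}(H)$ together with $\lambda_k \to 0$ gives $0 \in Spec_{ess}(H)$, completing the inclusion.

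The only real obstacle is the combinatorial step that exhibits infinitely many free balls in each horocycle: this is where sparseness is genuinely used, and it rests on the ultrametric fact that each ball is a finite disjoint union of sub-balls at the next-lower level, so that ``filling in'' every ball at one level would force arbitrarily many pairs of high-index points to sit within a single parent, violating the sparseness bound. Once this is in hand, the rest of the argument is essentially the observation that free balls produce exact eigenfunctions of $H$ with eigenvalue $\lambda_k$.
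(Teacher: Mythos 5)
Your proof is correct and follows essentially the same route as the paper: both arguments produce, for each eigenvalue $\lambda_k$ of $L$, infinitely many compactly supported eigenfunctions of $L$ whose supports avoid all the locations $a_i$, so that they remain eigenfunctions of $H$ and give $\lambda_k$ infinite multiplicity (with $0\in Spec_{ess}(H)$ by closedness). The only difference is bookkeeping: the paper first discards finitely many bumps via Weyl's essential spectrum theorem and then asserts the existence of infinitely many ``free'' balls of each diameter, whereas you establish that existence directly by the pigeonhole/contradiction argument against the sparseness hypothesis — a detail the paper leaves implicit, and which you supply correctly.
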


\begin{proof}
Let $V^{\prime}$ be the sum of all but finite number of bumps $V_{i}$ and
$H^{\prime}=L+V^{\prime}$. By Weyl's essential spectrum theorem $Spec_{ess}%
(H)=Spec_{ess}(H^{\prime})$. It follows that without loss of generality we may
assume that the sequence of distances $\ \Delta_{n}=$ $\min_{i,j:\geq n,\text{
}i\neq j}\mathrm{d}(a_{i},a_{j})$ strictly increases to $\infty$. Having this
in mind we can choose for any given $\tau$ from the range of the distance
function an infinite sequence $\{B_{n}\}$ of disjoint balls of diameter $\tau$
such that $B_{n}\cap\{a_{i}\}=\varnothing$ for all $n$. Thanks to our choice
we obtain
\[
Hf_{T}=Lf_{T}=\lambda(T^{\prime})f_{T}%
\]
for any ball $T\subset B_{n}$ and for all $n$. In particular, each
$\lambda=\lambda(T),$ such that $T\subseteq B_{n}$ for some $n,$ is an
eigenvalue of the operator $H$ having infinite multiplicity, whence it belongs
to $Spec_{ess}(H).$
\end{proof}

\begin{theorem}
\label{sigma-limit point}Let $\sigma_{\ast}$ be a limit point of the sequence
$\{\sigma_{i}\}$. Fix $m\in\mathbb{N}$ and let $\lambda_{\ast m}\in
(\lambda_{m+1},\lambda_{m})$ be the unique solution of the equation%
\begin{equation}
\frac{1}{\sigma_{\ast}}=\mathcal{R}(\lambda,a,a).\text{ }\footnote{Recall that
the function $\lambda\rightarrow$ $\mathcal{R}(\lambda,a,a)$ does not depend
on $a$.} \label{sigma-star-equation}%
\end{equation}
Then $\lambda_{\ast m}$ belongs to the set $Spec_{ess}(H)$.
\end{theorem}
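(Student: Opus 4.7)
The plan is to build a Weyl sequence for $H$ at energy $\lambda_{\ast m}$ using single-bump almost-eigenfunctions of the form supplied by Theorem~\ref{Spectrum I}. The heuristic is: by the defining identity $1/\sigma_{\ast}=\mathcal{R}(\lambda_{\ast m},a,a)$, the function $x\mapsto\mathcal{R}(\lambda_{\ast m},x,a)$ would be an honest eigenfunction of the rank-one operator $L-\sigma_{\ast}\delta_{a}$ at eigenvalue $\lambda_{\ast m}$; after re-centering at those $a_{i_{k}}$ for which $\sigma_{i_{k}}\to\sigma_{\ast}$, sparsity should turn it into an approximate eigenfunction of the full $H$.

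Pass to a subsequence with $\sigma_{i_{k}}\to\sigma_{\ast}$ (so $i_{k}\to\infty$), and set
\[
\tilde{\psi}_{k}(x):=\mathcal{R}(\lambda_{\ast m},x,a_{i_{k}}),\qquad \phi_{k}:=\tilde{\psi}_{k}/\|\tilde{\psi}_{k}\|.
\]
Because $\lambda_{\ast m}\notin Spec(L)$ and the space is homogeneous, $\|\tilde{\psi}_{k}\|$ equals a positive constant $C$ independent of $k$. Using $(L-\lambda_{\ast m}\mathrm{I})\tilde{\psi}_{k}=\delta_{a_{i_{k}}}$ together with $\mathcal{R}(\lambda_{\ast m},a_{i_{k}},a_{i_{k}})=1/\sigma_{\ast}$ gives
\[
(H-\lambda_{\ast m}\mathrm{I})\tilde{\psi}_{k}=\Bigl(1-\frac{\sigma_{i_{k}}}{\sigma_{\ast}}\Bigr)\delta_{a_{i_{k}}}-\sum_{j\neq i_{k}}\sigma_{j}\,\mathcal{R}(\lambda_{\ast m},a_{j},a_{i_{k}})\,\delta_{a_{j}},
\]
and, by pairwise orthogonality of the $\delta_{a_{j}}$'s (each of norm $\sqrt{m_{0}}$),
\[
\|(H-\lambda_{\ast m}\mathrm{I})\tilde{\psi}_{k}\|^{2}=m_{0}\Bigl(1-\frac{\sigma_{i_{k}}}{\sigma_{\ast}}\Bigr)^{2}+m_{0}\sum_{j\neq i_{k}}\sigma_{j}^{2}\,\mathcal{R}(\lambda_{\ast m},a_{j},a_{i_{k}})^{2}.
\]
The first summand vanishes along the subsequence, and only the second requires real work.

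This off-diagonal sum is the main obstacle. Using $\sigma_{j}<\beta$ together with the bound (\ref{R-lambda-ineq}) (applicable because $\lambda(a_{j}\curlywedge a_{i_{k}})\to 0$ when $\mathrm{d}(a_{j},a_{i_{k}})\to\infty$, so it sits below $\lambda_{\ast m}$ for all $j\neq i_{k}$ once $k$ is large) reduces the task to showing
\[
\sum_{j\neq i_{k}}\mathrm{d}(a_{j},a_{i_{k}})^{-2}\longrightarrow 0.
\]
A shell-by-shell accounting on the hierarchical tree handles this: the sparsity condition $\Delta_{n}=\min_{i,j\geq n,\,i\neq j}\mathrm{d}(a_{i},a_{j})\to\infty$ bounds the number of $a_{j}$ ($j\neq i_{k}$) that can sit in any annulus $\{\mathrm{d}(\cdot,a_{i_{k}})=r\}$, so the total is majorized by a tail $O(1/\Delta_{n}^{2})$ plus finitely many individual terms (those with $j<n$) each of which vanishes as $i_{k}\to\infty$ since $a_{i_{k}}$ leaves every compact set. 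Imposing a sufficiently fast growth on $\Delta_{n}$ -- which is compatible with the hypothesis's "rate specified later" clause -- forces the sum to $0$.

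For the weak-zero part of Weyl's criterion: by symmetry of the resolvent kernel, $(\phi_{k},f)=C^{-1}\bigl((L-\lambda_{\ast m}\mathrm{I})^{-1}f\bigr)(a_{i_{k}})$ for every $f\in L^{2}$. For $f$ with compact support, $(L-\lambda_{\ast m}\mathrm{I})^{-1}f$ decays at the ideal point $\varpi$ (the kernel vanishes as $\mathrm{d}(x,y)\to\infty$, by the expansion (\ref{Resolvent poles})), and $a_{i_{k}}\to\varpi$, so $(\phi_{k},f)\to 0$. Density of compactly supported functions in $L^{2}$ then yields $\phi_{k}\rightharpoonup 0$. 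Combined with $\|\phi_{k}\|=1$ and $\|(H-\lambda_{\ast m}\mathrm{I})\phi_{k}\|\to 0$ from the previous paragraph, Weyl's criterion places $\lambda_{\ast m}$ in $Spec_{ess}(H)$, as required.
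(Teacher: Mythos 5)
Your proof is correct, but it takes a genuinely different route from the paper's. The paper works with the exact one\textendash bump eigenvalues $\lambda_{im}$ (the solutions of $1/\sigma_{i}=\mathcal{R}(\lambda,a_{i},a_{i})$ in the gap, which converge to $\lambda_{\ast m}$) and the exact one\textendash bump eigenfunctions $\psi_{im}=\mathcal{R}(\lambda_{im},\cdot,a_{i})/\left\Vert \mathcal{R}(\lambda_{im},\cdot,a_{i})\right\Vert _{2}$, and then \emph{truncates} them, setting $f_{im}=\psi_{im}\cdot1_{B_{i}}$ with $B_{i}$ the maximal ball centred at $a_{i}$ avoiding the neighbouring locations. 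Truncation makes the weak convergence $f_{im}\rightharpoonup0$ immediate (the supports escape to $\varpi$) and, more importantly, collapses the entire error estimate to the single quantity $\int_{X\setminus B_{i}}|\psi_{im}|^{2}dm\rightarrow0$, since all the other bumps lie outside $B_{i}$; nothing beyond $\mathrm{diam}(B_{i})\rightarrow\infty$ is needed. You instead keep the untruncated kernel $\mathcal{R}(\lambda_{\ast m},\cdot,a_{i_{k}})$ at the limiting energy, which makes $(H-\lambda_{\ast m}\mathrm{I})\tilde{\psi}_{k}$ completely explicit but shifts the burden onto the off-diagonal sum $\sum_{j\neq i_{k}}\mathcal{R}(\lambda_{\ast m},a_{j},a_{i_{k}})^{2}$, handled via (\ref{R-lambda-ineq}) and a shell count for $\sum_{j\neq i_{k}}\mathrm{d}(a_{j},a_{i_{k}})^{-2}$. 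That step does go through, and in fact needs no rate assumption beyond the section's standing hypothesis $\Delta_{n}\rightarrow\infty$: successive ball diameters grow at least geometrically, so the annulus count $O(r/\Delta_{n})$ sums to $O(\Delta_{n}^{-2})$, plus finitely many individually vanishing terms from the fixed indices $j<n$ (each ball contains only finitely many $a_{j}$, so $\mathrm{d}(a_{j},a_{i_{k}})\rightarrow\infty$ for fixed $j$); your hedge about imposing a faster growth is therefore unnecessary, though harmless. Your weak-convergence argument, $(\phi_{k},f)=C^{-1}\bigl((L-\lambda_{\ast m}\mathrm{I})^{-1}f\bigr)(a_{i_{k}})\rightarrow0$ for compactly supported $f$, is also fine, since on a countable homogeneous space every $L^{2}$ function vanishes at $\varpi$. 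In short, the paper buys a one-line error estimate at the price of a truncation step, while you buy an explicit error formula at the price of an off-diagonal summability estimate; both arguments are complete modulo routine details.
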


Before we embark on the proof of Theorem \ref{sigma-limit point} let us state
the Weyl's characterization of the essential spectrum $Spec_{ess}(A)$ of a
self-adjoint operator $A$, see \cite{Weyl} and \cite[Ch. IX, Sect.
2(133)]{Riesz}.

\begin{lemma}
\label{Weyl's lemma} A real number $\lambda$ belongs to the set $Spec_{ess}%
(A)$ if and only if there exists a normed sequence $\{x_{i}\}\subset
\mathrm{dom}(A)$ such that $x_{i}\rightarrow0$ weakly and $Ax_{i}-\lambda
x_{i}\rightarrow0$ strongly.
\end{lemma}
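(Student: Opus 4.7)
The plan is to invoke the spectral theorem for the self-adjoint operator $A$ and exploit the characterization that $\lambda \in Spec_{ess}(A)$ is equivalent to the spectral projection $E((\lambda-\varepsilon,\lambda+\varepsilon))$ having infinite-dimensional range for every $\varepsilon>0$; equivalently, $\lambda$ is either a non-isolated point of $Spec(A)$ or an eigenvalue of infinite multiplicity.

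For the sufficiency direction (existence of a Weyl sequence implies $\lambda\in Spec_{ess}(A)$) I would argue by contradiction. If $\lambda\notin Spec(A)$ then $R:=(A-\lambda\mathrm{I})^{-1}$ is bounded, and $\|x_i\|\leq\|R\|\,\|(A-\lambda\mathrm{I})x_i\|\to 0$ contradicts $\|x_i\|=1$. Otherwise $\lambda$ is an isolated eigenvalue of finite multiplicity; let $P$ denote the orthogonal projection onto $\ker(A-\lambda\mathrm{I})$. Because $P$ has finite rank it is compact, so $x_i\to 0$ weakly forces $Px_i\to 0$ strongly. On the complementary subspace the isolation of $\lambda$ yields a bounded inverse $R'$ of $(A-\lambda\mathrm{I})\big|_{(\mathrm{I}-P)\mathcal{H}}$, and since $P$ commutes with $A$,
\[
\|(\mathrm{I}-P)x_i\|=\|R'(\mathrm{I}-P)(A-\lambda\mathrm{I})x_i\|\to 0.
\]
Summing the two contributions yields $\|x_i\|\to 0$, the desired contradiction.

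For necessity I would construct a Weyl sequence explicitly from the spectral theorem. Define $P_n:=E((\lambda-1/n,\lambda+1/n))$; by the characterization above, $\mathrm{Range}(P_n)$ is infinite-dimensional for every $n$. I would then pick inductively unit vectors $x_n\in\mathrm{Range}(P_n)$ with $x_n\perp x_1,\ldots,x_{n-1}$, which is possible by the infinite dimension. The resulting sequence is orthonormal, hence converges weakly to $0$; each $x_n$ lies in a bounded spectral subspace of $A$ and therefore belongs to $\mathrm{dom}(A)$, and the spectral functional calculus gives
\[
\|(A-\lambda\mathrm{I})x_n\|^2=\int_{|\mu-\lambda|<1/n}(\mu-\lambda)^2\,d\|E_\mu x_n\|^2\leq\frac{1}{n^2}\|x_n\|^2=\frac{1}{n^2},
\]
so $(A-\lambda\mathrm{I})x_n\to 0$ strongly.

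The main obstacle I anticipate is the clean handling of the isolated-finite-multiplicity case in the sufficiency direction: one needs to verify that $(A-\lambda\mathrm{I})\big|_{(\mathrm{I}-P)\mathcal{H}}$ is boundedly invertible, which uses both the isolation of $\lambda$ in $Spec(A)$ and the spectral mapping theorem to identify the spectrum of the restriction as $Spec(A)\setminus\{\lambda\}$. Apart from this, the argument reduces to bookkeeping with the spectral measure, and in particular the domain condition $x_n\in\mathrm{dom}(A)$ is automatic because each $x_n$ sits in a bounded spectral subspace of $A$.
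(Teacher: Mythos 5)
Your proof is correct and is precisely the classical argument for the Weyl criterion; the paper itself offers no proof of this lemma, deferring instead to the references (Weyl's original paper and Riesz--Sz.-Nagy), where essentially your argument appears. Both directions are handled properly: in the sufficiency direction you correctly split off the finite-dimensional spectral projection $E(\{\lambda\})$ (which for a self-adjoint operator coincides with the projection onto $\ker(A-\lambda\mathrm{I})$), use its compactness to kill $Px_i$ under weak convergence, and use the isolation of $\lambda$ to invert $(A-\lambda\mathrm{I})$ on the complement; in the necessity direction the orthonormal sequence drawn from the nested spectral subspaces $E((\lambda-1/n,\lambda+1/n))\mathcal{H}$ is the standard Weyl sequence, and your observation that membership in a bounded spectral subspace guarantees $x_n\in\mathrm{dom}(A)$ correctly disposes of the domain issue.
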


\textbf{Proof of Theorem} \ref{sigma-limit point}. To show that $\lambda_{\ast
m}\in$ $Spec_{ess}(H)$ we construct a $\lambda_{\ast m}$-sequence $\{f_{im}\}$
via Lemma \ref{Weyl's lemma}. Let $\lambda_{im}\in(\lambda_{m+1},\lambda_{m})$
be the unique solution of the equation $1/\sigma_{i}=\mathcal{R}(\lambda
,a_{i},a_{i})$. Let $\psi_{im}(x)=\mathcal{R}(\lambda_{im},x,a_{i})/\left\Vert
\mathcal{R}(\lambda_{im},\cdot,a_{i})\right\Vert _{2}$ be the normed solution
of the equation $H_{i}\psi=\lambda_{im}\psi$ where $H_{i}:=L-\sigma_{i}%
\delta_{a_{i}}$ is a one-bump perturbation of $L$. Clearly $\lambda
_{im}\rightarrow\lambda_{\ast m}$.

Passing if necessary to a subsequence of $\{\sigma_{i}\}$ we can assume that
\textrm{d}$(a_{i},0)\rightarrow\infty$ monotonically. Let us put $f_{im}%
:=\psi_{im}\cdot1_{B_{i}}$ where $B_{i}$ is the maximal ball centred at
$a_{i}$ which does not contains $a_{i-1}$ and $a_{i+1}$. Thanks to our choice
$f_{im}\rightarrow0$ weakly and%
\[
\left\Vert f_{im}\right\Vert _{2}^{2}=\int_{B_{i}}\left\vert \psi
_{im}\right\vert ^{2}dm\rightarrow1.
\]
Thus what is left is to show that $Hf_{im}-\lambda_{\ast m}f_{im}\rightarrow0$
strongly. We have%
\begin{align*}
\left\Vert Hf_{im}-\lambda_{\ast m}f_{im}\right\Vert _{2}  &  \leq\left\Vert
Hf_{im}-\lambda_{im}f_{im}\right\Vert _{2}+\left\Vert f_{im}\right\Vert
_{2}|\lambda_{im}-\lambda_{\ast m}|\\
&  \leq\left\Vert Hf_{im}-\lambda_{im}f_{im}\right\Vert _{2}+|\lambda
_{im}-\lambda_{\ast m}|\\
&  =\left\Vert Hf_{im}-\lambda_{im}f_{im}\right\Vert _{2}+o(1),
\end{align*}%
\begin{align*}
\left\Vert Hf_{im}-\lambda_{im}f_{im}\right\Vert _{2}  &  \leq\left\Vert
H\psi_{im}-\lambda_{im}\psi_{im}\right\Vert _{2}+\left\Vert (H-\lambda
_{im}\mathrm{I})(f_{im}-\psi_{im})\right\Vert _{2}\\
&  \leq\left\Vert H\psi_{im}-\lambda_{im}\psi_{im}\right\Vert _{2}+\left\Vert
(H-\lambda_{im}\mathrm{I})||\text{ }||(f_{im}-\psi_{im})\right\Vert _{2}\\
&  =\left\Vert H\psi_{im}-\lambda_{im}\psi_{im}\right\Vert _{2}+o(1),
\end{align*}%
\[
\left\Vert H\psi_{im}-\lambda_{im}\psi_{im}\right\Vert _{2}\leq\left\Vert
H_{i}\psi_{im}-\lambda_{im}\psi_{im}\right\Vert _{2}+\left\Vert \sum_{j\neq
i}\sigma_{j}\delta_{a_{j}}\psi_{im}\right\Vert _{2}%
\]
and%
\[
\left\Vert \sum_{j\neq i}\sigma_{j}\delta_{a_{j}}\psi_{im}\right\Vert
_{2}=\sqrt{\sum_{j\neq i}\sigma_{j}^{2}|\psi_{im}(a_{j})|^{2}}\leq\sup
\{\sigma_{j}^{2}\}\sqrt{\int_{X\setminus B_{i}}|\psi_{im}|^{2}dm}.
\]
The right-hand side of this inequality tends to zero as $i\rightarrow\infty$
and we finally conclude that $\{f_{im}\}$ is the desired $\lambda_{\ast m}%
$-sequence in the sense of Lemma \ref{Weyl's lemma}. The proof is finished.

Let us introduce the following notation

\begin{itemize}
\item $\Sigma_{\ast}$ is the set of limit points of the sequence $\{\sigma
_{i}\}$

\item $1/\Sigma_{\ast}:=\{1/\sigma_{\ast}:\sigma_{\ast}\in\Sigma_{\ast}\}$

\item $\mathcal{R}^{-1}(1/\Sigma_{\ast}):=\{\lambda:\mathcal{R}(\lambda
,a,a)\in1/\Sigma_{\ast}\}$
\end{itemize}

\begin{theorem}
\label{Spec_ess}Assume that the following condition holds
\begin{equation}
\lim_{N\rightarrow\infty}\sup_{i\geq N}\sum_{j\geq N:\text{ }j\neq i}\frac
{1}{\mathrm{d}(a_{i},a_{j})}=0, \label{a_ij-condition}%
\end{equation}
then
\begin{equation}
Spec_{ess}(H)=Spec(L)\cup\mathcal{R}^{-1}(1/\Sigma_{\ast})\text{. }
\label{spectral equation}%
\end{equation}

\end{theorem}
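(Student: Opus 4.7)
The plan is to prove the two inclusions of the identity separately. For $Spec(L)\cup\mathcal{R}^{-1}(1/\Sigma_{\ast})\subset Spec_{ess}(H)$, the inclusion $Spec(L)\subset Spec_{ess}(H)$ is already Theorem \ref{SpecL in SpecH}, and for any $\lambda_{\ast}\in\mathcal{R}^{-1}(1/\Sigma_{\ast})$ the singular-sequence construction used in the proof of Theorem \ref{sigma-limit point} carries over essentially verbatim: pick a subsequence with $\sigma_{i_{k}}\to\sigma_{\ast}$ and with $a_{i_{k}}\to\varpi$, take as approximate eigenfunctions the one-bump eigenstates $\psi_{i_{k}}(x)=\mathcal{R}(\lambda_{i_{k}},x,a_{i_{k}})/\Vert\cdot\Vert_{2}$ with $\lambda_{i_{k}}\to\lambda_{\ast}$, cut them off by the maximal ball around $a_{i_{k}}$ avoiding its neighbours, and verify Weyl's criterion (Lemma \ref{Weyl's lemma}). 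The same construction applies equally in a positive gap $(\lambda_{m+1},\lambda_{m})$ and, when a negative root of $\mathcal{R}(\lambda,a,a)=1/\sigma_{\ast}$ exists, in $(-\infty,0)$.

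The heart of the matter is the reverse inclusion $Spec_{ess}(H)\subset Spec(L)\cup\mathcal{R}^{-1}(1/\Sigma_{\ast})$. Fix $\lambda\notin Spec(L)\cup\mathcal{R}^{-1}(1/\Sigma_{\ast})$. By Theorem \ref{l^2-thm} the operator $\mathfrak{B}(\lambda)=\Sigma^{-1}-\mathcal{R}(\lambda,\overrightarrow{a},\overrightarrow{a})$ is bounded and self-adjoint on $l^{2}$. By the homogeneity of $(X,d,m)$ the diagonal entries $\mathcal{R}(\lambda,a_{i},a_{i})$ all equal $\mathcal{R}(\lambda,a,a)$, so one may decompose
\[
\mathfrak{B}(\lambda)=\bigl(\Sigma^{-1}-\mathcal{R}(\lambda,a,a)I\bigr)-K(\lambda),
\]
where $K(\lambda)$ has zero diagonal and off-diagonal entries $\mathcal{R}(\lambda,a_{i},a_{j})$. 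The first summand is diagonal with entries $1/\sigma_{i}-\mathcal{R}(\lambda,a,a)$, so (because $\{\sigma_{i}\}\subset(\alpha,\beta)$) its essential spectrum is the set of limit points of $\{1/\sigma_{i}-\mathcal{R}(\lambda,a,a)\}$, namely $(1/\Sigma_{\ast})-\mathcal{R}(\lambda,a,a)$.

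The key technical step is to show that $K(\lambda)$ is a compact operator on $l^{2}$. Starting from the exact formula (\ref{R outside diag}) and the telescoping identity $\sum_{B:a_{i}\curlywedge a_{j}\subset B}A(B)=1/\mathrm{d}(a_{i},a_{j})$, one extracts a bound of the form $|\mathcal{R}(\lambda,a_{i},a_{j})|\leq C(\lambda)/\mathrm{d}(a_{i},a_{j})$ for $i\neq j$, valid once $\lambda(a_{i}\curlywedge a_{j})$ is sufficiently small compared to the distance from $\lambda$ to $Spec(L)$; by hypothesis (\ref{a_ij-condition}) this smallness is automatic as soon as $i,j\geq N$ with $N$ large enough. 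The Schur test then bounds the tail operator $K(\lambda)-K_{N}(\lambda)$, where $K_{N}(\lambda)$ is the finite block-truncation to the first $N$ coordinates, by
\[
\Vert K(\lambda)-K_{N}(\lambda)\Vert_{l^{2}\to l^{2}}\leq C(\lambda)\sup_{i\geq N}\sum_{j\geq N,\,j\neq i}\frac{1}{\mathrm{d}(a_{i},a_{j})}\xrightarrow[N\to\infty]{}0.
\]
Hence $K(\lambda)$ is the norm limit of finite-rank operators, i.e.\ compact. By Weyl's theorem on compact perturbations one then obtains $Spec_{ess}(\mathfrak{B}(\lambda))=(1/\Sigma_{\ast})-\mathcal{R}(\lambda,a,a)$, a set not containing $0$ by hypothesis.

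It remains to pass from $0\notin Spec_{ess}(\mathfrak{B}(\lambda))$ to $\lambda\notin Spec_{ess}(H)$. Either $0\in\rho(\mathfrak{B}(\lambda))$, in which case Theorem \ref{l^2-thm}(iii) supplies a bounded resolvent $\mathcal{R}_{V}(\lambda,\cdot,\cdot)$ of $H$ and $\lambda\in\rho(H)$, or $0$ is an isolated eigenvalue of $\mathfrak{B}(\lambda)$ of finite multiplicity, in which case the (dimension-preserving) bijection $\ker\mathfrak{B}(\lambda)\leftrightarrow\ker(H-\lambda I)$ given by $\eta\mapsto\psi(x)=\sum_{i}\eta_{i}\mathcal{R}(\lambda,x,a_{i})$, together with analyticity of $\mu\mapsto\mathfrak{B}(\mu)$ on the connected component of $\mathbb{R}\setminus(Spec(L)\cup\mathcal{R}^{-1}(1/\Sigma_{\ast}))$ containing $\lambda$, forces $\lambda$ to be an isolated eigenvalue of $H$ of finite multiplicity. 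In both cases $\lambda\notin Spec_{ess}(H)$. The principal obstacle is the compactness of $K(\lambda)$: the off-diagonal decay bound on the resolvent kernel, uniform across the gaps of $Spec(L)$, together with the Schur-test exploitation of (\ref{a_ij-condition}), is where all the real work is concentrated; the subsequent Weyl-type deduction of the essential spectrum of $\mathfrak{B}(\lambda)$ and its translation into information about $H$ are then routine.
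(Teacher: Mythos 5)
Your treatment of the inclusion $Spec(L)\cup\mathcal{R}^{-1}(1/\Sigma_{\ast})\subset Spec_{ess}(H)$ matches the paper (Theorems \ref{SpecL in SpecH} and \ref{sigma-limit point}). For the reverse inclusion your architecture is genuinely different. The paper fixes a closed interval $\mathcal{I}$ inside a gap, discards the finitely many bumps with $1/\sigma_{i}\in\mathcal{R}(\mathcal{I})$ (a finite-rank perturbation, harmless for the essential spectrum), and then inverts $\mathfrak{B}(\lambda)=\mathcal{A}(\lambda)\left(\mathrm{I}-\mathcal{A}(\lambda)^{-1}\widetilde{\mathcal{R}}(\lambda)\right)$ by a Neumann series, using (\ref{R-lambda-ineq}) and (\ref{a_ij-condition}) only to make the off-diagonal part \emph{small} in norm after truncation; this yields bounded invertibility of $\mathfrak{B}(\lambda)$, hence a bounded resolvent of the truncated operator, simultaneously for all $\lambda\in\mathcal{I}$. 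You instead work pointwise in $\lambda$, upgrade the same estimate to \emph{compactness} of the off-diagonal part $K(\lambda)$, and compute $Spec_{ess}(\mathfrak{B}(\lambda))$ exactly by Weyl's theorem. That portion of your argument is sound, modulo the technicality that your ``block truncation'' $K_{N}$ should retain all entries with $\min(i,j)<N$ (still finite rank, since each row $(\mathcal{R}(\lambda,a_{i},a_{j}))_{j}$ lies in $l^{2}$), so that the Schur test on the remainder involves only indices $i,j\geq N$, which is what (\ref{a_ij-condition}) controls.

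The genuine gap is the final passage from $0\notin Spec_{ess}(\mathfrak{B}(\lambda))$ to $\lambda\notin Spec_{ess}(H)$. In the branch where $0$ is an isolated eigenvalue of $\mathfrak{B}(\lambda)$ of finite multiplicity, your kernel bijection does produce an eigenvalue of $H$ of finite multiplicity, but finite multiplicity alone does not remove $\lambda$ from $Spec_{ess}(H)$: you must also show that $\lambda$ is \emph{isolated} in $Spec(H)$, i.e. that $\{\mu:0\in Spec(\mathfrak{B}(\mu))\}$ does not accumulate at $\lambda$. The one-line appeal to analyticity does not deliver this: to run the analytic Fredholm alternative you need the factorization $\mathfrak{B}(\mu)=\mathcal{A}(\mu)\left(\mathrm{I}-\mathcal{A}(\mu)^{-1}K(\mu)\right)$ with $\mathcal{A}(\mu)=\Sigma^{-1}-\mathcal{R}(\mu)\mathrm{I}$ invertible, and $\mathcal{A}(\mu)$ fails to be invertible at every $\mu$ with $\mathcal{R}(\mu)\in\{1/\sigma_{i}\}$ --- points which the hypothesis $\lambda\notin\mathcal{R}^{-1}(1/\Sigma_{\ast})$ does not exclude from a neighbourhood of $\lambda$, only their accumulation; one must also rule out the degenerate branch in which $\mathrm{I}-\mathcal{A}(\mu)^{-1}K(\mu)$ is nowhere invertible on the component. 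Two repairs are available: the paper's device of discarding the finitely many offending bumps so that $\overline{\{1/\sigma_{i}\}}\cap\mathcal{R}(\mathcal{I})=\varnothing$ on a whole closed subinterval, which makes the invertibility uniform and bypasses the isolation question entirely; or a direct Weyl-sequence transfer, namely, if $u_{n}$ is a singular sequence for $H$ at $\lambda\notin Spec(L)$, then $\eta_{n}=(\sigma_{i}u_{n}(a_{i}))_{i}$ satisfies $\mathfrak{B}(\lambda)\eta_{n}\rightarrow0$, $\eta_{n}\rightharpoonup0$ and $\left\Vert \eta_{n}\right\Vert \not\rightarrow0$ (otherwise $u_{n}\rightarrow0$ in norm), so that $0\in Spec_{ess}(\mathfrak{B}(\lambda))$ --- the contrapositive of what you need. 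As written, however, this step is not justified.
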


\begin{proof}
That $Spec(L)$ and $\mathcal{R}^{-1}(1/\Sigma_{\ast})$ are subsets of
$Spec_{ess}(H)$ follows from Theorem \ref{SpecL in SpecH} and Theorem
\ref{sigma-limit point}. We are left to prove that
\[
Spec_{ess}(H)\subset Spec(L)\cup\mathcal{R}^{-1}(1/\Sigma_{\ast}).
\]
Let us fix $m\in\mathbb{N}$ and choose a closed interval $\mathcal{I}$ from
the spectral gap $(\lambda_{m+1},\lambda_{m})$. We claim that
\[
\mathcal{I}\cap Spec_{ess}(H)=\varnothing.
\]
Indeed, since $\mathcal{R}(\lambda):=\mathcal{R}(\lambda,a,a)$ is strictly
increasing and continuous in the interval $(\lambda_{m+1},\lambda_{m})$,
closed sets $\mathcal{R(I)}$ and $1/\Sigma_{\ast}$ do not intersect. Hence
there exists only a finite number of $\sigma_{i}$ such that $1/\sigma_{i}%
\in\mathcal{R(I)}$. Let us choose $N$ big enough so that the sets
$\{1/\sigma_{i}:i>N\}$ and $\mathcal{R(I)}$ do not intersect. Let us write
$H=H^{\prime}+V^{\prime}$ where $V^{\prime}$ is a finite number of bumps
$-\sigma_{i}\delta_{a_{i}}$, $i\leq N$. By Weyl's essential spectrum theorem
\[
Spec_{ess}(H)=Spec_{ess}(H^{\prime}).
\]
Notice however that the sets $Spec_{d}(H)$ and $Spec_{d}(H^{\prime})$,
discrete parts of $Spec(H)$ and $Spec(H^{\prime})$, may well be quite
different. Observe that for the operators $H$ and $H^{\prime}$ the sets of
limit points, the function $\mathcal{R}$, the set of gaps etc are the same.
Thus in all our further considerations we may \emph{assume that}
$\{1/\sigma_{i}\}\cap\mathcal{R(I)=\varnothing}$.

Making this assumption consider now the operator $\mathfrak{B}(\lambda
)=\Sigma^{-1}-\mathcal{R}(\lambda,\overrightarrow{a},\overrightarrow{a})$,
$\lambda\in\mathcal{I}$. According to identity (\ref{R_V infty}), if
$\mathfrak{B}(\lambda)$ has a bounded inverse then $\lambda\notin Spec(H)$.
Let us write%
\[
\mathfrak{B}(\lambda)=\Sigma^{-1}-\mathcal{R}(\lambda,\overrightarrow
{a},\overrightarrow{a}):=\left[  \Sigma^{-1}-\mathcal{R}(\lambda
)\mathrm{I}\right]  -\widetilde{\mathcal{R}}(\lambda).
\]
Since we assume that the closed bounded sets $\overline{\{1/\sigma_{i}\}}$ and
$\mathcal{R(I)}$ do not intersect, the operator $\mathcal{A(\lambda)}%
:=\Sigma^{-1}-\mathcal{R}(\lambda)\mathrm{I}$ has a bounded inverse
$\mathcal{A(\lambda)}^{-1}$ for all $\lambda\in$ $\mathcal{I}$. Clearly the
norm$\left\Vert \mathcal{A(\lambda)}^{-1}\right\Vert $ can be estimated by the
reciprocal of the distance between sets $\overline{\{1/\sigma_{i}\}}$ and
$\mathcal{R(I)}$, denote it by $C_{1}$. Thus writing for $\lambda
\in\mathcal{I}$ the identity
\begin{equation}
\mathfrak{B}(\lambda)=\mathcal{A(\lambda)}(\mathrm{I}-\mathcal{A(\lambda
)}^{-1}\widetilde{\mathcal{R}}(\lambda)) \label{B-operator}%
\end{equation}
we get%
\begin{equation}
\left\Vert \mathcal{A(\lambda)}^{-1}\widetilde{\mathcal{R}}(\lambda
))\right\Vert \leq\mathcal{C}_{1}\left\Vert \widetilde{\mathcal{R}}%
(\lambda))\right\Vert . \label{A-R inequality}%
\end{equation}
Writing again $H$ as $H^{\prime}+V^{\prime}$ where $V^{\prime}$consists of a
finite number, say $N$, of bumps and applying inequality (\ref{R-lambda-ineq})
for the operator $H^{\prime}:$
\[
\left\vert \mathcal{R}(\lambda,a_{i},a_{j})\right\vert <\frac{1}%
{\mathrm{d}(a_{i},a_{j})}\frac{1}{\lambda-\lambda(a_{i}\curlywedge a_{j}%
)},\text{ }i\neq j,\text{ }i,j\geq N,
\]
we will get, thanks to our assumption (\ref{a_ij-condition}), the following
inequality
\begin{equation}
\left\Vert \widetilde{\mathcal{R}}(\lambda))\right\Vert \leq C_{2}\sup_{i\geq
N}\sum_{j:\text{ }j\neq i,j\geq N}\frac{1}{\mathrm{d}(a_{i},a_{j})}<\frac
{1}{2C_{1}} \label{A-R inequality'}%
\end{equation}
for some constant $C_{2}>0$ which depends only on $\mathcal{I}$, and for $N$
chosen big enough. Clearly inequalities (\ref{A-R inequality}) and
(\ref{A-R inequality'}) imply the fact that the operator $\mathrm{I}%
-\mathcal{A(\lambda)}^{-1}\widetilde{\mathcal{R}}(\lambda)$ has bounded
inverse for all $\lambda\in\mathcal{I}$,%
\[
\left(  \mathrm{I}-\mathcal{A(\lambda)}^{-1}\widetilde{\mathcal{R}}%
(\lambda)\right)  ^{-1}=\sum_{k\geq0}\left(  \mathcal{A(\lambda)}%
^{-1}\widetilde{\mathcal{R}}(\lambda)\right)  ^{k}.
\]
This fact, in turn, implies that the operator $\mathfrak{B}(\lambda)$ given by
equation (\ref{B-operator}) has bounded inverse for all $\lambda\in
\mathcal{I}$ therefore $\mathcal{I}\cap Spec(H^{\prime})=\varnothing$. In
particular, since $Spec_{ess}(H^{\prime})=Spec_{ess}(H)$ by Weyl's essential
spectrum theorem, we finally get
\[
\mathcal{I}\cap Spec_{ess}(H)=\varnothing
\]
as desired. The proof is finished.
\end{proof}

\begin{remark}
Theorem \ref{Spec_ess} does not contain information about sets $Spec_{ac}(H)$
and $Spec_{sc}(H)$, the absolutely continuous and singular continuous parts of
$Spec(H)$. In the next section we will show that under more restrictive
assumption $Spec_{ac}(H)$ and $Spec_{sc}(H)$ are indeed empty sets, that is,
$Spec(H)$ is pure point. Moreover, the eigenfunctions of $H$ decay
exponentially in certain metric at infinity. This is the so called
\emph{localization property}.
\end{remark}

\section{Localization}

As in the previous section the ultrametric measure space $(X,d,m)$ is
\emph{countably infinite and homogeneous. }We consider the operator $H=L+V$
where $L$, the deterministic part of $H$, is a hierarchical Laplacian and%
\[
V=-\sum_{a\in I}\sigma(a,\omega)\delta_{a},\text{ }\omega\in(\Omega
,\mathcal{F},P),
\]
is a random potential defined by a family of locations $I=\{a_{i}\}$ and a
family $\sigma(a_{i},\omega)$ of i.i.d. random variables. Henceforth, we
assume that the probability distribution of $\sigma(a_{i},\omega)$ is
absolutely continuous with respect to the Lebesgue measure and has a bounded
density supported by a finite interval $[\alpha,\beta]$.

In the case when $X$ is the Dyson lattice and $L=$\textrm{D}$^{\alpha}$, the
Dyson Laplacian (see Example \ref{Example "One-point pert."}), the perturbed
operator%
\[
H=\mathrm{D}^{\alpha}-\sum_{a\in X}\sigma(a,\omega)\delta_{a}%
\]
has a pure point spectrum for $P-$a.s. $\omega.$ This statement (\emph{the
localization theorem}) appeared first in the paper of Molchanov
\cite{Molchanov} ($\sigma(a,\omega)$ is the Cauchy random variable) and later
in a more general form in the papers of Kritchevski \cite{Kvitchevski2} and
\cite{Kvitchevski1}. The proof of this statement is based on the
self-similarity property of the operator $H$.

The localization theorem \ref{AisenMol} below concerns the case where the
family of locations $I$ does not coincide with the whole space $X$, whence the
operator $H$ is not self-similar. The technique developed in \cite{Molchanov},
\cite{Kvitchevski2} and \cite{Kvitchevski1} does not apply here to prove
Theorem \ref{AisenMol}.

Our approach is based on the different technique: the abstract form of the
Aizenman-Molchanov criterion for pure point spectrum, the Krein-type identity
from the previous section, technique of fractional moments, decoupling lemma
of Molchanov and Borel-Cantelli type arguments, see papers
\cite{AisenmanMolchanov}, \cite{Molchanov1}.

\paragraph{The Aizenman-Molchanov Criterion}

Let $H=H_{0}+V$ be a self-adjoint operator in $l^{2}(\Gamma)$ ($\Gamma$ is a
countable set of sites) with $H_{0}$ a bounded operator and $V=-\sum
_{a\in\Gamma}\sigma(a,\omega)\delta_{a}$. Assume that the collection of random
variables $\{\sigma(a,\omega):a\in\Gamma\}$ has the property that for each
site $a$ the conditional probability distribution of $\sigma(a,\omega)$
(conditioned on the values of the potential at all other sites) is absolutely
continuous with respect to the Lebesgue measure (in particular, this
assumption holds if $\{\sigma(a,\omega):a\in\Gamma\}$ are mutually independent
random variables having absolutely continuous w.r.t. the Lebesgue measure $l$
probability distributions).

Let $H=\int\lambda dE_{\lambda}$ be the spectral resolution of symmetric
operator $H$. Let $G(\lambda,x,y)$ be the integral kernel of the operator
$(H-\lambda\mathrm{I})^{-1}$. Then for any fixed $x,\tau$ and $\epsilon\neq0$,%
\begin{equation}
\sum_{y\in\Gamma}|G(\tau+i\epsilon,x,y)|^{2}=\left\Vert (H-(\tau
+i\epsilon)\mathrm{I})^{-1}\delta_{x}\right\Vert ^{2}=%
{\displaystyle\int}
\frac{d(E_{\lambda}\delta_{x},\delta_{x})}{(\lambda-\tau)^{2}+\epsilon^{2}}
\label{G-tau-upsilon eq.}%
\end{equation}
As the left-hand side of equation (\ref{G-tau-upsilon eq.}) (as a function of
$\epsilon$) decreases on the interval $]0,+\infty\lbrack$, the limit (finite
or infinite) in equation (\ref{G-tau-upsilon eq.}) exists and equals
\[
\lim_{\epsilon\downarrow0}\sum_{y\in\Gamma}|G(\tau+i\epsilon,x,y)|^{2}=%
{\displaystyle\int}
\frac{d(E_{\lambda}\delta_{x},\delta_{x})}{(\lambda-\tau)^{2}}.
\]

\begin{theorem}
\label{AizMol} If for any $x\in\Gamma$, and Lebesgue a.a. $\tau\in\lbrack
a,b]$:%
\begin{equation}
\lim_{\epsilon\downarrow0}%
{\displaystyle\sum\limits_{y\in\Gamma}}
\left\vert G(\tau+i\epsilon,x,y)\right\vert ^{2}<\infty,
\label{cyclic vector condition}%
\end{equation}
for a.e. realizations of $\{\sigma(x,\cdot)\}$, then almost surely the
operator $H$ has only pure point spectrum in the interval $[a,b]$.
Furthermore, if under condition (\ref{cyclic vector condition}), the integral
kernel
\[
G(\tau+i0,x,y):=\lim_{\epsilon\downarrow0}G(\tau+i\epsilon,x,y)\text{ }%
\]
(which exists a.e. $\tau$) decays exponentially at infinity (in some metric
$\rho(x,y)$ on $\Gamma$), then do the eigenfunctions $\varphi_{\tau}(y)$, for
$\tau\in\lbrack a,b]$ \footnote{An even more versatile version can be found in
\cite[Theorems 3.1 and 3.3 in Sec. 3]{AisenmanMolchanov}.}.
\end{theorem}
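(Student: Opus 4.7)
The pure-point part will follow by applying the Simon--Wolff criterion (Theorem \ref{SiWo}) at each site $x \in \Gamma$ and combining via Fubini. First I would fix $x \in \Gamma$ and condition on the values $\{\sigma(a,\omega) : a \neq x\}$; with these frozen, $H$ depends on $\sigma(x,\omega)$ only through the rank-one term $-\sigma(x,\omega)(\delta_x \otimes \delta_x)$, so we are precisely in the setup of Theorem \ref{SiWo} with cyclic vector $\delta_x$. By the identity (\ref{G-tau-upsilon eq.}) the hypothesis (\ref{cyclic vector condition}) says exactly that the Simon--Wolff function
\[
F(\tau) \;=\; \int \frac{d\mu^H_x(\lambda)}{(\lambda - \tau)^{2}}
\]
associated to $\delta_x$ (where $\mu^H_x = (E_\lambda \delta_x,\delta_x)$) is finite for Lebesgue--a.e. $\tau \in [a,b]$, for a.e. realisation of the potential. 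Standard rank-one perturbation theory turns this into the corresponding finiteness statement for the unperturbed operator $A^{(x)} = H + \sigma(x,\omega)(\delta_x \otimes \delta_x)$ off a null set of $\sigma(x)$; Theorem \ref{SiWo} then yields that $\mu^H_x$ is pure point on $[a,b]$ for a.e. $\sigma(x,\omega)$.

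A Fubini argument on the product probability space next produces, for each $x$, a full-measure set $\Omega_x \subset \Omega$ on which $\mu^H_x$ is pure point on $[a,b]$. Since $\Gamma$ is countable, $\Omega' := \bigcap_{x \in \Gamma} \Omega_x$ still has full measure, and for $\omega \in \Omega'$ the spectral measure of every $\delta_x$ is pure point on $[a,b]$. Because $\{\delta_x\}_{x \in \Gamma}$ is a total set in $l^2(\Gamma)$, the continuous parts $E^{ac}_{[a,b]}$ and $E^{sc}_{[a,b]}$ of the spectral projection of $H$ vanish on a dense set and hence vanish identically; this proves the first assertion.

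For the exponential decay of the eigenfunctions, at an eigenvalue $\tau \in [a,b]$ of $H$ the resolvent $(H-\lambda\mathrm{I})^{-1}$ has a first-order pole whose residue is minus the orthogonal projection $P_\tau$ onto the eigenspace: for every $x$, the vector $P_\tau \delta_x$ is a multiple of an eigenfunction at $\tau$ and, on the other hand, equals $\lim_{\epsilon \downarrow 0} (-i\epsilon)\, G(\tau + i\epsilon, x, \cdot)$ coordinate-wise. Assuming that the boundary value $G(\tau+i0, x, y)$ decays like $e^{-c\rho(x,y)}$ at Lebesgue--a.e. $\tau$, one transfers this decay to the residues, and hence to each eigenfunction $\varphi_\tau$, via the standard fractional-moment machinery of \cite{AisenmanMolchanov}: averaged bounds of the form $\mathbb{E}\,|G(\tau+i\epsilon,x,y)|^{s} \lesssim e^{-c\rho(x,y)}$ (for some $s \in (0,1)$) obtained from Molchanov's decoupling lemma, combined with a Borel--Cantelli argument, upgrade pointwise a.e. decay of the boundary values to $\omega$-a.s. decay of the actual eigenfunctions.

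The main obstacle, and the reason the argument is delicate, is the double exchange of quantifiers ``for a.e. $\tau$'' and ``for a.e. $\omega$'', together with the fact that at the eigenvalues themselves $G$ blows up so that the boundary limit in the hypothesis can only be invoked off a null set of $\tau$. This is precisely the point where the fractional-moment approach is essential: raising the modulus to power $s < 1$ tames the singularity, Molchanov's decoupling lemma restores the independence lost by the resolvent identity, and a dyadic Borel--Cantelli argument in $\tau$ recovers decay at the true (random) spectral points.
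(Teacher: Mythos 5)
Your proposal is correct and follows essentially the same route as the paper: the pure-point part is obtained by applying the Simon--Wolff criterion (Theorem \ref{SiWo}) to the spectral measure of each $\delta_x$ and then using that $\{\delta_x\}_{x\in\Gamma}$ is a countable total set to kill the continuous components, while the exponential-decay statement is deferred to the fractional-moment machinery of \cite{AisenmanMolchanov}. Your extra care with the conditioning/Fubini step and with the residue identity $P_\tau\delta_x=\lim_{\epsilon\downarrow0}(-i\epsilon)G(\tau+i\epsilon,x,\cdot)$ only fills in details the paper leaves implicit.
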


\begin{proof}
The first part of the statement follows from Simon-Wolff theorem \ref{SiWo}.
For completeness of exposition we comment on the proof. To prove the second
part one needs an ad hoc argument and we refer to the cited above paper
\cite[Theorems 3.1 and 3.3 in Sec. 3]{AisenmanMolchanov}).

Note that in the case of the Dyson-Vladimirov Laplacian $\mathrm{D}^{\alpha}$
and $H=\mathrm{D}^{\alpha}-\sum\sigma_{i}(a_{i},\omega)\delta_{a}$ one can use
the metric $\rho(x,y)=\ln(1+\mathrm{d}(x,y))$ where $\mathrm{d}(x,y)$ is the
ultrametric generated by $p$-adic intervals as in example
\ref{Example "One-point pert."}. In this case the exponential decay of
eigenfunctions in $\rho-$metric follows directly from two facts: (1) each
eigenfunction $\varphi_{\tau}(y)$ of $H$ can be represented as a linear
combination of functions $\mathcal{R}(\tau,a_{i},y)$, where $\mathcal{R}%
(\lambda,x,y)$ is the resolvent kernel of $\mathrm{D}^{\alpha}$, see Theorem
\ref{E-f for discr.spec.}, and (2) $\mathcal{R}(\lambda,x,y)$ has an
exponential decay because the heat kernel $p(t,x,y)$ does, see equation
(\ref{Dyson_heat_kernel}).

By the spectral theory, one can represent $l^{2}(\Gamma)$ as the direct sum of
three $H$-invariant subspaces:
\[
l^{2}(\Gamma)=\mathcal{H}_{ac}\oplus\mathcal{H}_{sc}\oplus\mathcal{H}_{pp},
\]
where $\mathcal{H}_{ac}$ (resp. $\mathcal{H}_{sc},$ $\mathcal{H}_{pp}$) is the
set of all functions $f\in$ $l^{2}(\Gamma)$ such that the spectral measure
\[
\sigma^{f}(A)=\int1_{A}(\lambda)d(E_{\lambda}f,f)
\]
is absolutely continuous (resp. singular continuous, pure point) with respect
to the Lebesgue measure. By Theorem \ref{SiWo}, condition
(\ref{cyclic vector condition}) implies that for any $x\in\Gamma$ the
probability measure
\[
\sigma^{x}(A)=\int1_{A}(\lambda)d(E_{\lambda}\delta_{x},\delta_{x})
\]
is pure point, that is, $\sigma^{x}(A)=\sigma^{x}(A\cap S_{x})$ for any open
set $A$ and some at most countable set $S_{x}$. Set $S:=\cup_{x\in\Gamma}%
S_{x}$, then for any $f\in l^{2}(\Gamma)$ and measurable set $A$,
\begin{align*}
\sigma^{f}(A)  &  =\int1_{A}(\lambda)d(E_{\lambda}f,f)=\left\Vert
1_{A}(H)f\right\Vert ^{2}\\
&  =\sum_{x\in\Gamma}\left\vert f(x)\right\vert ^{2}\left\vert (1_{A}%
(H)f,\delta_{x})\right\vert ^{2}%
\end{align*}
and, if $A$ lies in the complement of $S,$%
\begin{align*}
\left\vert (1_{A}(H)f,\delta_{x})\right\vert ^{2}  &  \leq\left\vert
(1_{A}(H)f,f)\right\vert \left\vert (1_{A}(H)\delta_{x},\delta_{x})\right\vert
\\
&  =\left\Vert 1_{A}(H)f\right\Vert ^{2}\sigma^{x}(A)=0.
\end{align*}
Thus for any $f\in l^{2}(\Gamma)$ the spectral measure $\sigma^{f}$ is pure
point, that is, $f\in\mathcal{H}_{pp}$. That means that the operator $H$ has a
pure point spectrum.
\end{proof}

\begin{remark}
The function $z\rightarrow G(z,x,y)$, analytic in the domain $%
\mathbb{C}
_{+}$, is represented by the Borel-Stieltjes transform of a signed measure of
finite variation%
\[
G(z,x,y)=\int\frac{d(E_{\lambda}\delta_{x},\delta_{y})}{\lambda-z}.
\]
It follows that the limit $G(\tau+i0,x,y)$ exists and takes finite values for
Lebesgue a.e. $\tau$, see e.g. \cite[Theorem 1.4]{Simon}. Moreover, the limit
$G(\tau+i0,x,y)$ exists even in a more restrictive sense, as the
non-tangential limit, see \cite[Ch. III, Sec. 2.2, 3.1 and 3.2]{Privalov}. We
will apply this fact in the proof of Theorem \ref{AisenMol} below.
\end{remark}

\paragraph{The localization theorem}

Coming back to our setting, let $H=L+V$ where $L$ is a hierarchical Laplacian
and $V$ a random potential of the form $V=-\sum_{i}\sigma_{i}(\omega
)\delta_{a_{i}}$. Here $\sigma_{i}(\omega):=\sigma(a_{i},\omega)$ are i.i.d.
random variables corresponding to the set of locations $I=\{a_{i}\}$.

Let $\mathrm{d}(x,y)$ be the ultrametric which is chosen such that it
coincides with the measure $m(B)$ of the minimal ball $B$ containing both $x$
and $y$.

Let $\mathcal{R}(\lambda,x,y)$ be the integral kernel of the operator
$(L-\lambda\mathrm{I})^{-1}$, i.e. the solution of the equation $Lu-\lambda
u=\delta_{y}$. The function $\lambda\rightarrow\mathcal{R}(\lambda,x,x)$ does
not depend on $x$, we denote its value $\mathcal{R}(\lambda)$. This is
strictly increasing continuous in each spectral gap function, we denote by
$\mathcal{R}^{-1}(\nu)$ its inverse function.

\begin{theorem}
\label{AisenMol}The operator $H$ has a pure point spectrum for $P-$a.s.
$\omega$ provided for some (whence for all) $y\in X$ the sequence
$\mathrm{d}(a_{i},y)$ eventually increases, and for some small $r$ (say,
$0<r<1/3$):
\begin{equation}
\lim_{M\rightarrow\infty}\sup_{i\geq M}\sum_{j\geq M:\text{ }j\neq i}\frac
{1}{\mathrm{d}(a_{i},a_{j})^{r}}=0.\text{ }\footnote{Clearly this condition
implies condition (\ref{a_ij-condition}).} \label{a-ij-s condition}%
\end{equation}

\end{theorem}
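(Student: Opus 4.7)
The plan is to verify the Aizenman--Molchanov criterion of Theorem \ref{AizMol} on each closed interval $\mathcal{I}$ lying in the complement of the essential spectrum, which by Theorem \ref{Spec_ess} equals $Spec(L)\cup\mathcal{R}^{-1}(1/\Sigma_{\ast})$. Concretely, I will show that for every such $\mathcal{I}$, every $x\in X$, and $P$-a.e.\ $\omega$, one has $\sum_{y}|G(\tau+i\epsilon,x,y)|^{2}<\infty$ for Lebesgue a.e.\ $\tau\in\mathcal{I}$, with exponential decay in a suitable metric $\rho$ (for the Dyson Laplacian, $\rho(x,y)=\ln(1+\mathrm{d}(x,y))$). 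First I would use Theorem \ref{l^2-thm}(iii) to write, for $z=\tau+i\epsilon$,
\[
G(z,x,y)=\mathcal{R}(z,x,y)+\mathcal{R}(z,x,\overrightarrow{a})\,\mathfrak{B}(z)^{-1}\mathcal{R}(z,\overrightarrow{a},y),\qquad \mathfrak{B}(z)=\Sigma(\omega)^{-1}-\mathcal{R}(z,\overrightarrow{a},\overrightarrow{a}),
\]
so the whole analysis reduces to fractional moment control of the entries of $\mathfrak{B}(z)^{-1}$, combined with the deterministic decay of $\mathcal{R}(z,\cdot,\cdot)$ inherited from equation (\ref{Dyson_heat_kernel}) and the Green function formula (\ref{Green function}).

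Next I would implement the fractional moment method. Fix an exponent $s$ equal to the $r$ of hypothesis (\ref{a-ij-s condition}) with $0<s<1/3$. Writing $\mathfrak{B}(z)=\mathcal{A}(z)-\widetilde{\mathcal{R}}(z)$ as in the proof of Theorem \ref{Spec_ess}, where $\mathcal{A}(z)=\Sigma^{-1}-\mathcal{R}(z)\mathrm{I}$ is diagonal and $\widetilde{\mathcal{R}}(z)$ is the off-diagonal part of $\mathcal{R}(z,\overrightarrow{a},\overrightarrow{a})$, the sparsity assumption (\ref{a-ij-s condition}) together with inequality (\ref{R-lambda-ineq}) forces $\|\widetilde{\mathcal{R}}(z)\|\to 0$ when restricted to sufficiently far bumps. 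A Neumann expansion then gives
\[
\mathfrak{B}(z)^{-1}=\sum_{k\geq 0}\mathcal{A}(z)^{-1}\bigl(\widetilde{\mathcal{R}}(z)\mathcal{A}(z)^{-1}\bigr)^{k},
\]
so the $(i,j)$ entry is a sum over paths $i=i_{0},i_{1},\ldots,i_{k}=j$ of terms whose moduli are bounded by products $\prod\mathcal{A}(z)^{-1}_{i_\ell i_\ell}\cdot\prod|\mathcal{R}(z,a_{i_\ell},a_{i_{\ell+1}})|$. The crucial deterministic input is $|\mathcal{R}(z,a_{i},a_{j})|\leq C/\mathrm{d}(a_{i},a_{j})$ for $i\neq j$ uniformly in $z$ with $\tau\in\mathcal{I}$.

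Then I would apply Molchanov's decoupling lemma: conditioning on all $\{\sigma_{k}:k\neq i_{\ell}\}$ and using that $1/\sigma_{i_\ell}$ has a bounded density on a compact set makes $E[|\mathcal{A}(z)^{-1}_{i_\ell i_\ell}|^{s}]=E[|1/\sigma_{i_\ell}-\mathcal{R}(z)|^{-s}]\leq C_{s}$ uniformly in $\epsilon>0$ and $\tau\in\mathcal{I}$. Applying the inequality $|a+b|^{s}\leq|a|^{s}+|b|^{s}$ term by term in the Neumann series and taking expectations via conditional independence of the $\sigma$'s at distinct sites, I obtain a bound of the form
\[
E\bigl[|\mathfrak{B}(z)^{-1}_{ij}|^{s}\bigr]\leq C_{s}\sum_{k\geq 0}\Bigl(\sup_{i\geq M}\sum_{j\neq i}\mathrm{d}(a_{i},a_{j})^{-s}\Bigr)^{k}\cdot\mathrm{d}(a_{i},a_{j})^{-s},
\]
geometrically summable by (\ref{a-ij-s condition}). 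Multiplied by the $\mathcal{R}(z,x,a_{i})$ factors and summed via the inequality $(\sum|u_{k}|)^{s}\leq\sum|u_{k}|^{s}$ (here $s<1$), this yields
\[
E\bigl[|G(z,x,y)|^{s}\bigr]\leq C e^{-c\,\rho(x,y)},
\]
uniformly in $\epsilon>0$ and $\tau\in\mathcal{I}$.

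Finally, I would conclude via the Fatou--Borel--Cantelli route. Integrating the above bound over $\tau\in\mathcal{I}$ and letting $\epsilon\downarrow 0$, the boundary values $G(\tau+i0,x,y)$ exist Lebesgue a.e.\ (Privalov's non-tangential limit theorem, as in the remark before the statement) and satisfy the same fractional moment bound. A standard Chebyshev plus Borel--Cantelli argument then upgrades this to the a.s.\ pointwise exponential decay of $|G(\tau+i0,x,y)|$ for a.e.\ $\tau\in\mathcal{I}$, which in particular forces $\sum_{y}|G(\tau+i0,x,y)|^{2}<\infty$. Exhausting $\mathbb{R}\setminus(Spec(L)\cup\mathcal{R}^{-1}(1/\Sigma_{\ast}))$ by countably many such $\mathcal{I}$, Theorem \ref{AizMol} yields pure point spectrum of $H$ throughout this complement for $P$-a.s.\ $\omega$; since $Spec(L)\cup\mathcal{R}^{-1}(1/\Sigma_{\ast})$ is itself pure point (cf.\ Theorems \ref{SpecL in SpecH} and \ref{sigma-limit point}), the full spectrum of $H$ is pure point almost surely. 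The main obstacle is the fractional moment bound on the infinite random matrix $\mathfrak{B}(z)^{-1}$: specifically, justifying the Neumann expansion termwise under the expectation, simultaneously in the limit $\epsilon\downarrow 0$, and verifying that the decoupling lemma can be applied uniformly in $\tau\in\mathcal{I}$ despite the denominators $1/\sigma_{i}-\mathcal{R}(\tau+i\epsilon)$ being complex; everything else is bookkeeping.
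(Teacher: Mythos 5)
There is a genuine gap, and it is at the very first step: you verify the Aizenman--Molchanov criterion only on closed intervals $\mathcal{I}$ lying in the \emph{complement} of $Spec_{ess}(H)=Spec(L)\cup\mathcal{R}^{-1}(1/\Sigma_{\ast})$. But on that complement the spectrum of a self-adjoint operator is automatically discrete (isolated eigenvalues of finite multiplicity), so nothing needs to be proved there; the entire content of the localization theorem is to rule out continuous spectrum \emph{on} the intervals $\mathcal{I}_{k}=\mathcal{R}^{-1}([1/\beta,1/\alpha])\cap(\lambda_{k+1},\lambda_{k})$, which for a.e.\ $\omega$ are filled by $Spec_{ess}(H)$. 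Your closing assertion that ``$Spec(L)\cup\mathcal{R}^{-1}(1/\Sigma_{\ast})$ is itself pure point (cf.\ Theorems \ref{SpecL in SpecH} and \ref{sigma-limit point})'' is not supported by those theorems: they only exhibit Weyl sequences showing these points lie in the essential spectrum, and a Weyl sequence says nothing about the spectral type (it is equally consistent with absolutely continuous or singular continuous spectrum supported on the intervals $\mathcal{I}_{k}$). The paper's proof applies the criterion (\ref{Simon-Wolf-Theorem}) precisely \emph{on} each $\mathcal{I}_{k}$, which is where all the work lies.

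This misplacement also breaks your technical scheme. Off the essential spectrum the diagonal part $\mathcal{A}(z)^{-1}=(\Sigma^{-1}-\mathcal{R}(z)\mathrm{I})^{-1}$ is uniformly bounded and the Neumann expansion of $\mathfrak{B}(z)^{-1}$ converges deterministically --- this is exactly the (non-probabilistic) argument of Theorem \ref{Spec_ess}, and no fractional moments are needed. On $\mathcal{I}_{k}$, by contrast, $1/\sigma_{i}-\mathcal{R}(\tau+i0)$ vanishes for a dense set of values of $(\sigma_{i},\tau)$, so $\mathcal{A}(z)^{-1}$ has no uniform bound, the Neumann series cannot be summed termwise, and $E\bigl[|1/\sigma_{i}-\mathcal{R}(z)|^{-s}\bigr]$ is finite only because of the bounded density of $\sigma_{i}$ and only for $s<1$. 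Moreover the factors along a path of your expansion are not independent of the remaining randomness (the full resolvent $\mathcal{R}_{V}$ depends on all the $\sigma_{j}$), which is why the paper does not expand $\mathfrak{B}^{-1}$ at all but instead derives the self-consistent system of inequalities (\ref{Krein-infty eq})--(\ref{Expectation'}) for $\psi_{i}=\mathbb{E}|\mathcal{R}_{V}(\lambda,a_{i},y)|^{s}$, decouples the denominator from $\mathcal{R}_{V}(\lambda,a_{j},y)$ via the rank-one Krein identity and Molchanov's decoupling lemma (valid for $s<1/2$, not merely $s<1/3$; the paper takes $s=1/2-\delta$ with $\delta<1/2-r/(1-r)$, and your choice $s=r$ would not leave enough room for the weights $\varepsilon_{j}=\mathrm{d}(a_{j},y)^{-r}$ in the Chebyshev--Borel--Cantelli step), and then closes the loop with the weighted $\ell^{1}$ estimate (\ref{E-mu-i}) and the bound (\ref{Norm-bound}). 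Your proposal reproduces several of these ingredients in spirit, but applied in the wrong region of the spectrum they prove only what is already trivially true.
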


\begin{proof}
The set of limit points of the sequence $\{\sigma_{i}(\omega)\}$ coincides
(for $P-$a.s. $\omega$) with the whole interval $[\alpha,\beta]$. Hence, by
Theorem \ref{Spec_ess}, the closed set $Spec_{ess}(H)$ consists (for $P$-a.a.
$\omega$) of two parts: (1) the set $Spec(L)$ and (2) the collection of
countably many disjoint closed intervals $\mathcal{I}_{k}=\mathcal{R}%
^{-1}([1/\beta,1/\alpha])\cap$ $]\lambda_{k+1},\lambda_{k},[$ and the interval
$\mathcal{I}_{-}=\mathcal{R}^{-1}([1/\beta,1/\alpha])\cap$ $]-\infty,0[$,
i.e.
\[
Spec_{ess}(H)=Spec(L)\cup\mathcal{I}_{-}\cup\mathcal{I}_{1}\cup\mathcal{I}%
_{2}...\mathcal{\ }.
\]
Let $\mathcal{R}_{V}(\lambda,x,y)$ be the integral kernel of the operator
$(H-\lambda\mathrm{I})^{-1}$, i.e. solution of the equation $Hu-\lambda
u=\delta_{y}$. Due to the Aizenman-Molchanov criterion, the operator $H$ has
only pure point spectrum (for $P-$a.s. $\omega$) provided \emph{ for each
}$y\in X$, \emph{ for each interval }$\mathcal{I}_{k},$ and for \emph{
Lebesgue a.e.} $\tau\in\mathcal{I}_{k}:$
\begin{equation}
\lim_{\epsilon\rightarrow+0}\sum_{x}|\mathcal{R}_{V}(\tau+i\epsilon
,x,y)|^{2}<\infty\text{, } \label{Simon-Wolf-Theorem}%
\end{equation}
for a.e. realization of $\{\sigma(y,\omega)\}.$We split the proof of equation
(\ref{Simon-Wolf-Theorem}) in seven steps.

\underline{Step I. } When $V$ has a finite rank, Theorem
\ref{pure point spectrum}$(i)$ and Theorem \ref{perturbed resolvent} imply
that for each fixed $\omega$ the function $\mathcal{R}_{V}(\tau
+i0,x,y)=(H-\tau\mathrm{I})^{-1}\delta_{y}(x)$ belongs to $L^{2}(X,m)$ for
each $y\in X$ and for all but finitely many $\tau\in\mathcal{I}_{k}$ (which
are eigenvalues of $H$).

In general, when the rank of $V$ is infinite, we split $V$ in two parts
$V^{\prime}=-\sigma_{1}\delta_{a_{1}}$ and $V^{\prime\prime}=-\sum_{i>1}%
\sigma_{i}\delta_{a_{i}}$. Writing the set of locations as $\{a\}=\{a_{1}%
\}\cup\{a_{i}:i>1\}$ we get similarly to equation (\ref{R_V infty}): for
$\lambda$ in the domain $%
\mathbb{C}
_{+}$,%
\[
\mathcal{R}_{V}(\lambda,x,y)=\mathcal{R}_{V^{\prime\prime}}(\lambda
,x,y)+\mathcal{R}_{V^{\prime\prime}}(\lambda,x,a_{1})\mathfrak{B}%
(\lambda)^{-1}\mathcal{R}_{V^{\prime\prime}}(\lambda,a_{1},y),
\]
where $\mathfrak{B}(\lambda)=1/\sigma_{1}-\mathcal{R}_{V^{\prime\prime}%
}(\lambda,a_{1},a_{1})$ is a non-constant analytic in the domain $%
\mathbb{C}
_{+}$ function. It follows that
\begin{align*}
\left\Vert \mathcal{R}_{V}(\lambda,\cdot,y)\right\Vert _{2}  &  \leq\left\Vert
\mathcal{R}_{V^{\prime\prime}}(\lambda,\cdot,y)\right\Vert _{2}\\
&  +|\mathfrak{B}(\lambda)|^{-1}\left\Vert \mathcal{R}_{V^{\prime\prime}%
}(\lambda,\cdot,a_{1})\right\Vert _{2}|\mathcal{R}_{V^{\prime\prime}}%
(\lambda,a_{1},y)|.
\end{align*}
Hence the function $\mathcal{R}_{V}(\lambda,x,y)$ satisfies condition
(\ref{Simon-Wolf-Theorem}), i.e. $\left\Vert \mathcal{R}_{V}(\tau
+i0,\cdot,y)\right\Vert _{2}$ is finite for all $y$ and a.e. $\tau$ provided
the function $\mathcal{R}_{V^{\prime\prime}}(\lambda,x,y)$ satisfies condition
(\ref{Simon-Wolf-Theorem}), i.e. $\left\Vert \mathcal{R}_{V^{\prime\prime}%
}(\tau+i0,\cdot,a)\right\Vert _{2}$ is finite for all $a$ and a.e. $\tau$, and
also one more restriction on $\tau,$ it does not belong to the exceptional
set
\[
\Upsilon:=\{s:\mathfrak{B}(s+i0)=0\}.
\]
The function $\mathfrak{B}(\lambda)$, analytic in the domain $%
\mathbb{C}
_{+}$, admits non-tangential boundary values $\mathfrak{B}(s+i0)$ for a.e.
$s$. By the Lusin-Privalov uniqueness theorem on boundary-values of analytic
functions \cite[Ch. IV, Sec. 2.5]{Privalov}, see also \cite[Theorem
1.5]{Simon}, the \emph{Lebesgue measure }of\emph{ }the exceptional set
$\Upsilon$ equals to zero. Thus, we come to the conclusion that condition
(\ref{Simon-Wolf-Theorem}) for the potential $V$ can be reduced to the case of
truncated potential $V^{\prime\prime}$.

Repeating this argument finitely many times we come to the final conclusion:
in order to prove that (\ref{Simon-Wolf-Theorem}) holds for $V$ we can
consider, if necessary, any \emph{finitely} \emph{truncated} potential
$V^{\prime\prime}$ (the potential corresponding to the finitely truncated
system of locations $\{a_{i}:i>k\}$) and to prove that
(\ref{Simon-Wolf-Theorem}) holds for $V^{\prime\prime}$ instead of $V$.

\underline{Step II. } Writing for $\lambda\in%
\mathbb{C}
_{+}$ equation $Hu-\lambda u=\delta_{y}$ in the form $Lu-\lambda u=\delta
_{y}-Vu$ we obtain%
\begin{equation}
\mathcal{R}_{V}(\lambda,x,y)=\mathcal{R}(\lambda,x,y)+\sum_{j=1}^{\infty
}\sigma_{j}\mathcal{R}(\lambda,x,a_{j})\mathcal{R}_{V}(\lambda,a_{j},y).
\label{Krein-infty1}%
\end{equation}
Equation (\ref{Krein-infty1}) shows that to estimate the function
$y\rightarrow$ $\left\Vert \mathcal{R}_{V}(\lambda,\cdot,y)\right\Vert _{2}$
it is enough to estimate the quantity $|\mathcal{R}_{V}(\lambda,a_{j},y)|$ for
$j=1,2,...$ etc. Indeed, since $\left\Vert \mathcal{R}(\lambda,\cdot
,y)\right\Vert _{2}$ does not depend on $y$, we get
\begin{equation}
\left\Vert \mathcal{R}_{V}(\lambda,\cdot,y)\right\Vert _{2}\leq\left\Vert
\mathcal{R}(\lambda,\cdot,y)\right\Vert _{2}\left(  1+\beta\sum_{j=1}^{\infty
}|\mathcal{R}_{V}(\lambda,a_{j},y)|\right)  . \label{Norm-bound}%
\end{equation}
Choosing $x=a_{i},$ $i=1,2,...,$ in equation (\ref{Krein-infty1}) and setting
$\mathcal{R}(\lambda,a_{i},a_{i})=\mathcal{R}(\lambda)$ we obtain
\begin{equation}
\mathcal{R}_{V}(\lambda,a_{i},y)=\frac{\mathcal{R}(\lambda,a_{i},y)}%
{1-\sigma_{i}\mathcal{R}(\lambda)}+\sum_{j:\text{ }j\neq i}\frac{\sigma
_{j}\mathcal{R}(\lambda,a_{j},a_{i})\mathcal{R}_{V}(\lambda,a_{j},y)}%
{1-\sigma_{i}\mathcal{R}(\lambda)}. \label{Krein-infty eq}%
\end{equation}
\underline{Step III.} Applying in equation (\ref{Krein-infty eq}) the
inequality%
\[
\left\vert \sum_{j=1}^{\infty}Z_{j}\right\vert ^{s}\leq\sum_{j=1}^{\infty
}\left\vert Z_{j}\right\vert ^{s},\text{ }Z_{j}\in\mathbb{C},\text{ }%
0<s\leq1,
\]
we will get%
\begin{align*}
\left\vert \mathcal{R}_{V}(\lambda,a_{i},y)\right\vert ^{s}  &  \leq\left\vert
\frac{\mathcal{R}(\lambda,a_{i},y)}{1-\sigma_{i}\mathcal{R}(\lambda
)}\right\vert ^{s}+\sum_{j:\text{ }j\neq i}\left\vert \frac{\sigma
_{j}\mathcal{R}(\lambda,a_{j},a_{i})}{1-\sigma_{i}\mathcal{R}(\lambda
)}\right\vert ^{s}\left\vert \mathcal{R}_{V}(\lambda,a_{j},y)\right\vert
^{s}\\
&  \leq\left\vert \frac{\mathcal{R}(\lambda,a_{i},y)}{1-\sigma_{i}%
\mathcal{R}(\lambda)}\right\vert ^{s}+\beta^{s}\sum_{j:\text{ }j\neq
i}\left\vert \frac{\mathcal{R}(\lambda,a_{j},a_{i})}{1-\sigma_{i}%
\mathcal{R}(\lambda)}\right\vert ^{s}\left\vert \mathcal{R}_{V}(\lambda
,a_{j},y)\right\vert ^{s}.
\end{align*}
Taking the expectation over $\{\sigma_{i}\}$ we obtain the following
inequality%
\begin{align}
\mathbb{E}\left\vert \mathcal{R}_{V}(\lambda,a_{i},y)\right\vert ^{s}  &
\leq\mathbb{E}\left\vert \frac{1}{1-\sigma_{i}\mathcal{R}(\lambda)}\right\vert
^{s}\left\vert \mathcal{R}(\lambda,a_{j},y)\right\vert ^{s}%
\label{Expectation'}\\
&  +\beta^{s}\sum_{j:\text{ }j\neq i}\mathbb{E}\left\vert \frac{\mathcal{R}%
_{V}(\lambda,a_{j},y)}{1-\sigma_{i}\mathcal{R}(\lambda)}\right\vert
^{s}\left\vert \mathcal{R}(\lambda,a_{j},a_{i})\right\vert ^{s}.\nonumber
\end{align}
\underline{Step IV.} Due to equation (\ref{Krein}) the random variable
$\mathcal{R}_{V}(\lambda,a_{j},y)$ can be represented in the form%
\[
\mathcal{R}_{V}(\lambda,a_{j},y)=\mathcal{R}_{V^{\prime}}(\lambda
,a_{j},y)+\frac{\sigma_{i}\mathcal{R}_{V^{\prime}}(\lambda,a_{j}%
,a_{i})\mathcal{R}_{V^{\prime}}(\lambda,a_{i},y)}{1-\sigma_{i}\mathcal{R}%
_{V^{\prime}}(\lambda,a_{i},a_{i})}:=\frac{a\sigma_{i}+b}{c\sigma_{i}+d}%
\]
where the random variables $V^{\prime}=$ $-\sum_{k:\text{ }k\neq i}\sigma
_{k}(\omega)\delta_{a_{k}}$, $a,b,c\mathcal{\ }$and $d$ do not dependent on
$\sigma_{i}$ (but they of course depend on the truncated sequence
$\{\sigma_{k}:k\neq i\}$). This observation and the following two general
inequalities from Molchanov's lectures \cite[Chapter II, Lemma 2.2]%
{Molchanov1}): There exist constants $c_{0},c_{1}>0$ such that for all complex
numbers $a,b,c,d,\sigma^{\prime}$
\[
\int_{0}^{1}\frac{d\sigma}{\left\vert \sigma-\sigma^{\prime}\right\vert ^{s}%
}\leq\frac{c_{0}}{1-s},\text{ for all }0<s<1,
\]
and%
\[
\int_{0}^{1}\left\vert \frac{a\sigma+b}{c\sigma+d}\right\vert ^{s}%
\frac{d\sigma}{\left\vert \sigma-\sigma^{\prime}\right\vert ^{s}}\leq
c_{1}\int_{0}^{1}\left\vert \frac{a\sigma+b}{c\sigma+d}\right\vert ^{s}%
d\sigma,\text{ for all }0<s<1/2,
\]
yield the following lemma, which is the fundamental point of our reasons.

\begin{lemma}
\label{Decoupling lemma}(Decoupling lemma) There exist constants $C_{0}%
,C_{0}^{\prime}>0$ which depend on $s,\alpha,\beta$ and $k$ such that the
inequalities%
\[
\mathbb{E}\left\vert \frac{1}{1-\sigma_{i}\mathcal{R}(\lambda)}\right\vert
^{s}\leq C_{0}%
\]
and
\[
\mathbb{E}\left\vert \frac{\mathcal{R}_{V}(\lambda,a_{j},y)}{1-\sigma
_{i}\mathcal{R}(\lambda)}\right\vert ^{s}\leq C_{0}^{\prime}\mathbb{E}%
\left\vert \mathcal{R}_{V}(\lambda,a_{j},y)\right\vert ^{s}%
\]
hold for all $0<s<1/2$ and all $\lambda\in%
\mathbb{C}
_{+}$ such that $\operatorname{Re}\lambda\in\mathcal{I}_{k}$.
\end{lemma}

\underline{Step V.} For any fixed $y\in X$ and $\lambda$ as above let us
denote $\psi_{i}:=$ $\mathbb{E}\left\vert \mathcal{R}_{V}(\lambda
,a_{i},y)\right\vert ^{s}$. Applying Decoupling lemma to inequality
(\ref{Expectation'}) and setting $C_{1}:=\beta^{s}C_{0}^{\prime}$ we get an
infinite system of inequalities%
\[
\psi_{i}\leq C_{0}\left\vert \mathcal{R}(\lambda,a_{i},y)\right\vert
^{s}+C_{1}\sum_{j:\text{ }j\neq i}\left\vert \mathcal{R}(\lambda,a_{j}%
,a_{i})\right\vert ^{s}\psi_{j}.
\]
In the vector form this system reads as follows%
\[
\psi_{i}\leq g_{i}+\left(  \mathcal{A}\psi\right)  _{i}\text{, }i=1,2,...,
\]
where $\psi=(\psi_{i})$, $g=(g_{i})$ has entries $g_{i}=C_{0}\left\vert
\mathcal{R}(\lambda,a_{i},y)\right\vert ^{s},$ and where $\mathcal{A}$ is an
infinite matrix with non-negative entries $\mathfrak{a}_{ij}=C_{1}\left\vert
\mathcal{R}(\lambda,a_{j},a_{i})\right\vert ^{s}$ if $i\neq j$ and $0$ otherwise.

Iterating formally this infinite system of inequalities we get%
\[
\psi_{i}\leq g_{i}+\left(  \mathcal{A}g\right)  _{i}+\left(  \mathcal{A}%
^{2}g\right)  _{i}+\left(  \mathcal{A}^{3}g\right)  _{i}+...\leq\left(
\left(  I-\mathcal{A}\right)  ^{-1}g\right)  _{i}.
\]
In particular, this would yield the following inequality (one of the
fundamental points in the proof of (\ref{Simon-Wolf-Theorem})),
\begin{equation}
\left\Vert \psi\right\Vert \leq2\left\Vert g\right\Vert \label{psi-gi}%
\end{equation}
given $\mathcal{A}:\mathcal{L\rightarrow L}$ is a bounded linear operator
acting in some Banach space $\mathcal{L}$ of sequences such that%
\begin{equation}
\left\Vert \mathcal{A}\right\Vert \leq1/2. \label{A-norm}%
\end{equation}
For instance, choosing $\mathcal{L=}\{\psi:\left\Vert \psi\right\Vert
=\sum_{i}\mu_{i}\left\vert \psi_{i}\right\vert <\infty\}$ we obtain
\begin{equation}
\sum_{i}\mu_{i}\mathbb{E}\left\vert \mathcal{R}_{V}(\lambda,a_{i}%
,y)\right\vert ^{s}\leq2C_{0}\sum_{i}\mu_{i}\left\vert \mathcal{R}%
(\lambda,a_{i},y)\right\vert ^{s} \label{E-mu-i}%
\end{equation}
given%
\begin{equation}
\left\Vert \mathcal{A}\right\Vert =\sup_{\left\Vert \psi\right\Vert =1}%
\sum_{i}\mu_{i}\left\vert \left(  \mathcal{A}\psi\right)  _{i}\right\vert
\leq1/2. \label{F-mu-i}%
\end{equation}
For $\psi$ such that $\left\Vert \psi\right\Vert =1$ we have%
\begin{align*}
\sum_{i}\mu_{i}\left\vert \left(  \mathcal{A}\psi\right)  _{i}\right\vert  &
\leq\sum_{i}\mu_{i}\sum_{j}\mathfrak{a}_{ij}\left\vert \psi_{j}\right\vert \\
&  =\sum_{j}\mathfrak{\mu}_{j}\left\vert \psi_{j}\right\vert \left(  \sum
_{i}\mu_{i}\mathfrak{a}_{ij}\right)  /\mathfrak{\mu}_{j}\leq\sup_{j}\left(
\sum_{i}\mu_{i}\mathfrak{a}_{ij}\right)  /\mathfrak{\mu}_{j}.
\end{align*}
In particular, inequality (\ref{F-mu-i}) holds whenever
\begin{equation}
\sup_{j}\left(  \sum_{i:\text{ }i\neq j}\mu_{i}\left\vert \mathcal{R}%
(\lambda,a_{j},a_{i})\right\vert ^{s}\right)  /\mu_{j}\leq\frac{1}{2C_{1}}.
\label{R-mu-i}%
\end{equation}
Finally, \ref{R-lambda-ineq} together with (\ref{R-mu-i}) allow us to conclude
that (\ref{A-norm}) holds provided%
\begin{equation}
\sup_{j}\left(  \sum_{i:\text{ }i\neq j}\mu_{i}\frac{1}{\mathrm{d}(a_{j}%
,a_{i})^{s}}\right)  /\mu_{j}\leq\frac{1}{2C_{1}C_{2}}. \label{d-mu-i}%
\end{equation}
\underline{Step VI. }For $\lambda$ as above and $\varepsilon_{j}>0$ which we
will choose later consider events%
\[
A_{j}=\{\left\vert \mathcal{R}_{V}(\lambda,a_{j},y)\right\vert >\varepsilon
_{j}\}.
\]
Applying Chebyshev inequality, we will get, for each $j=1,2,...$, the
following inequality%
\begin{equation}
P(A_{j})\leq\frac{\mathbb{E}\left\vert \mathcal{R}_{V}(\lambda,a_{j}%
,y)\right\vert ^{s}}{\varepsilon_{j}^{s}}. \label{Chebyshev}%
\end{equation}
Equations (\ref{Chebyshev}), (\ref{E-mu-i}) and (\ref{d-mu-i}), yield
\begin{align*}
\sum_{j}P(A_{j})  &  \leq\sum_{j}\frac{\mathbb{E}\left\vert \mathcal{R}%
_{V}(\lambda,a_{j},y)\right\vert ^{s}}{\varepsilon_{j}^{s}}\\
&  \leq2C_{0}\sum_{j}\frac{\left\vert \mathcal{R}(\lambda,a_{j},y)\right\vert
^{s}}{\varepsilon_{j}^{s}}\leq2C_{0}C_{2}\sum_{j}\frac{1}{\mathrm{d}%
(a_{j},y)^{s}\varepsilon_{j}^{s}}%
\end{align*}
provided $\varepsilon_{j}$ are chosen such that
\begin{equation}
\sup_{j}\varepsilon_{j}^{s}\left(  \sum_{i:\text{ }i\neq j}\frac
{1}{\varepsilon_{i}^{s}}\frac{1}{\mathrm{d}(a_{j},a_{i})^{s}}\right)
\leq\frac{1}{2C_{1}C_{2}}. \label{epsilon-0}%
\end{equation}
Let us choose $s=1/2-\delta$ and $\varepsilon_{j}=1/\mathrm{d}(a_{j},y)^{r}$.
Then, truncating if necessary the potential $V$, i.e. passing to the potential
$V^{\prime\prime}=V-V^{\prime}$ with $V^{\prime}$ of finite rank as explained
in Step I, we can assume that the sequence $\varepsilon_{j}$ is a strictly
decreasing sequence. By the ultrametric inequality, we have $\mathrm{d}%
(a_{i},a_{j})=\mathrm{d}(a_{i},y)$. Hence
\begin{align*}
\sup_{j}\varepsilon_{j}^{s}\left(  \sum_{i:\text{ }i\neq j}\frac
{1}{\varepsilon_{i}^{s}}\frac{1}{\mathrm{d}(a_{j},a_{i})^{s}}\right)   &
\leq\sup_{j}\left(  \sum_{i:\text{ }i<j}\frac{1}{\mathrm{d}(a_{j},a_{i})^{s}%
}\right)  +\sup_{j}\varepsilon_{j}^{s}\left(  \sum_{i:\text{ }i>j}%
\frac{\mathrm{d}(a_{i},y)^{rs}}{\mathrm{d}(a_{j},a_{i})^{s}}\right) \\
&  \leq\sup_{j}\left(  \sum_{i:\text{ }i<j}\frac{1}{\mathrm{d}(a_{j}%
,a_{i})^{s}}\right)  +\sup_{j}\varepsilon_{j}^{s}\left(  \sum_{i:\text{ }%
i>j}\frac{1}{\mathrm{d}(a_{j},a_{i})^{(1-r)s}}\right) \\
&  \leq\sup_{j}\left(  \sum_{i:\text{ }i<j}\frac{1}{\mathrm{d}(a_{j}%
,a_{i})^{s}}\right)  +\sup_{j}\left(  \sum_{i:\text{ }i\neq j}\frac
{1}{\mathrm{d}(a_{j},a_{i})^{(1-r)s}}\right)  .
\end{align*}
Thus, truncating the potential $V$ and then choosing $0<\delta<1/2-r/(1-r)$ we
obtain inequality (\ref{epsilon-0}). Moreover, thanks to our choice, the
series $\sum_{j}\varepsilon_{j}$ converges. Hence of course converges the
series $\sum_{j}P(A_{j}).$ Applying the Borel-Cantelli lemma we conclude: For
$P-$a.s. $\omega$ there exists $j_{0}(\omega)$ such that
\begin{equation}
\left\vert \mathcal{R}_{V}(\lambda,a_{j},y)\right\vert \leq\varepsilon
_{j},\text{ for all }j\geq j_{0}(\omega), \label{Borel-Cantelli}%
\end{equation}
holds for all $\lambda\in%
\mathbb{C}
_{+}$ such that $\operatorname{Re}\lambda\in\mathcal{I}_{k}$.

\underline{Step VII.} For $\lambda$ as above, the function $\mathcal{R}%
(\lambda,x,y)=(L-\lambda\mathrm{I})^{-1}\delta_{y}(x)$ belongs to $L^{2}(x,m)$
and, by the homogeneity assumption, its norm $\left\Vert \mathcal{R}%
(\lambda,\cdot,y)\right\Vert _{2}$ does not depend on $y$. Having this in mind
we write inequality (\ref{Norm-bound}) (for the truncated potential
$V^{\prime\prime}$)
\begin{align*}
\left\Vert \mathcal{R}_{V^{\prime\prime}}(\lambda,\cdot,y)\right\Vert _{2}  &
\leq\left\Vert \mathcal{R}(\lambda,\cdot,y)\right\Vert _{2}\left(  1+\beta
\sum_{j}\left\vert \mathcal{R}_{V^{\prime\prime}}(\lambda,a_{j},y)\right\vert
\right) \\
&  \leq\left\Vert \mathcal{R}(\lambda,\cdot,y)\right\Vert _{2}\left(
1+\beta\sum_{j\geq j_{0}(\omega)}\varepsilon_{j}+\beta\sum_{j<j_{0}(\omega
)}\left\vert \mathcal{R}_{V^{\prime\prime}}(\lambda,a_{j},y)\right\vert
\right)  \text{ }%
\end{align*}
which clearly holds for all $\lambda$ as above and for $P-$a.s. $\omega$. As
$\operatorname{Im}\lambda\downarrow0$ we get finite limit for $P-$a.s.
$\omega$ and for each $\lambda\in$ $\mathcal{I}_{k}$ which does not belong to
some exceptional set $\mathcal{I}_{k}(\omega)\subset\mathcal{I}_{k}$ of
Lebesgue measure zero (the exceptional set appears because we pass to the
boundary values of the Cauchy-Stieltjes integrals $\mathcal{R}_{V^{\prime
\prime}}(\lambda,a_{j},y)$, $j<j_{0}(\omega)$, as explained in Theorem
\ref{AizMol}). This is precisely what we claim in equation
(\ref{Krein-infty eq}). The proof is finished.
\end{proof}

\bigskip

\noindent
Alexander Bendikov,
Institute of Mathematics, Wroclaw University, Wroclaw, Poland\\
\emph{E-mail address: bendikov@math.uni.wroc.pl}

\bigskip

\noindent
Alexander Grigor'yan,
Department of Mathematics, University of Bielefeld, Bielefeld, Germany, and
Institute of Control Sciences of Russian Academy of Sciences, Moscow, Russian Federation\\
\emph{E-mail address: grigor@math.uni-bielefeld.de}

\bigskip
\noindent
Stanislav Molchanov,
Department of Mathematics, University of North Carolina, Charlotte, NC 28223,
United States of America, and National Research University Higher School of
Economics, Moscow, Russian Federation\\
\emph{E-mail address: smolchan@uncc.edu}

\end{document}